\title[Higher order Sobolev spaces between manifolds]
  {Higher order weak differentiability and Sobolev spaces between manifolds}
\author{Alexandra Convent}
\address{Alexandra Convent\\
  Universit{\'e} catholique de Louvain\\
  Institut de Recherche en Math\'ematique et Physique\\
  Chemin du Cyclotron 2 bte L7.01.01\\
  1348 Louvain-la-Neuve\\
  Belgium}
\email{Alexandra.Convent@uclouvain.be}
\author{Jean Van Schaftingen}
\address{Jean Van Schaftingen\\
  Universit{\'e} catholique de Louvain\\
  Institut de Recherche en Math\'ematique et Physique\\
  Chemin du Cyclotron 2 bte L7.01.01\\
  1348 Louvain-la-Neuve\\
  Belgium}
\email{Jean.VanSchaftingen@uclouvain.be}
\newtheorem{propoIntro}{Proposition}
\newtheorem{thm}{Theorem}[section]
\newtheorem{propo}[thm]{Proposition}
\newtheorem{lemme}[thm]{Lemma}
\theoremstyle{definition}
\newtheorem{de}{Definition}[section]
\newtheorem{example}{Example}[section]
\newtheorem{open}{Open question}[section]
\newtheorem{rem}{Remark}[section]
\DeclareMathOperator{\supp}{supp}
\DeclareMathOperator{\id}{id}
\DeclareMathOperator{\Mor}{Mor}
\DeclareMathOperator{\Vertlift}{Vert}
\providecommand{\psh}[2]{\ensuremath{\langle #1,#2\rangle}}
\providecommand{\abs}[1]{\lvert#1\rvert}
\newcommand{\bigabs}[1]{\bigl\lvert #1 \bigr\rvert}
\newcommand{\Bigabs}[1]{\Bigl\lvert #1 \Bigr\rvert}
\newcommand{\floor}[1]{\lfloor #1 \rfloor}
\newcommand{\norm}[1]{\lVert#1\rVert}
\newcommand{\bignorm}[1]{\bigl\lVert#1\bigr\rVert}
\newcommand{\double}[1]{\llbracket#1\rrbracket}
\newcommand{\R}{\mathbb{R}}
\newcommand{\N}{\mathbb{N}}
\newcommand{\Bset}{\mathbb{B}}
\newcommand{\st}{\,:\,}
\begin{document}

\begin{abstract}
We define the notion of higher-order colocally weakly differentiable maps from a manifold $M$ to a manifold $N$. 
When $M$ and $N$ are endowed with Riemannian metrics, $p\ge 1$ and $k\ge 2$, this allows us to define the intrinsic higher-order homogeneous Sobolev space $\dot{W}^{k,p}(M,N)$. 
We show that this new intrinsic definition is not equivalent in general with the definition by an isometric embedding of $N$ in a Euclidean space; if the manifolds $M$ and $N$ are compact, the intrinsic space is a larger space than the one obtained by embedding. We show that a necessary condition for the density of smooth maps in the intrinsic space $\dot{W}^{k,p}(M,N)$ is that $\pi_{\lfloor k p \rfloor} (N) \simeq \{0\}$.
We investigate the chain rule for higher-order differentiability in this setting. 
\end{abstract}

\maketitle 

\tableofcontents
\section{Introduction}

Higher-order Sobolev maps between manifolds are measurable maps that are at least twice weakly differentiable whose derivatives satisfy some summability condition.
As their first-order counterparts such maps are a natural framework to study \emph{geometrical objects} as biharmonic~\citelist{\cite{chang}\cite{gong}\cite{moser}\cite{struwe}\cite{urakawa}} and polyharmonic~\citelist{\cite{angels}\cite{gastel}\cite{gold}\cite{lamm}} maps and some \emph{physical models} as the Schr\"odinger--Airy and third order Landau--Lifschitz equations~\cite{sw}.

For every \(k\in \N_*\) and every \(p\in [1,+\infty)\), Sobolev maps between the manifolds \(M\) and \(N\) can be defined by first embedding the target manifold \(N\) in a Euclidean space \(\R^\nu\) for some \(\nu \in \N\) through an \emph{isometric embedding} \(\iota \in C^k (N, \R^\nu)\) 
and then considering the set~\citelist{\cite{angels}\cite{gong}\cite{moser}}
\begin{multline}\label{Wkp}
\dot{W}^{k,p}_\iota (M,N) = \bigl\{ u \colon M \to N \st \iota \circ u \in W^{k,1}_\mathrm{loc}(M, \R^\nu)
\text{ and }\abs{D^k(\iota \circ u)} \in L^p(M)\bigr\}.
\end{multline}
Since every Riemannian manifold \(N\) can be smoothly isometrically embedded in a Euclidean space~\citelist{\cite{nash56}*{theorem 3}}, this definition~\eqref{Wkp} is always possible.
In the first-order case \(k = 1\), this definition turns out to be independent of the embedding (see~\cite{cvs}*{proposition 2.7}).
However in the higher-order case \(k\ge2\), this definition \emph{is not intrinsic}: it depends on the choice of the embedding \(\iota\) (see proposition~\ref{particularIotaGeneral} below). 
The goal of this work is to propose and study an intrinsic definition of higher-order Sobolev spaces. 

In the first-order case \(k=1\), the definition by embedding~\eqref{Wkp} is also equivalent to the definition of Sobolev spaces into metric spaces~\citelist{\cite{hajlasz}*{theorem 3.2}\cite{ht}*{theorem 2.17}\cite{cvs}*{proposition 2.2}}. 
However definitions of Sobolev spaces into metric spaces \emph{do not provide any notion of weak derivative} and thus do not seem adapted to a further definition of higher-order Sobolev spaces by iteration.

For first-order Sobolev maps between manifolds, an \emph{intrinsic definition} in which weak differentiability plays a central role has been proposed in our previous work~\cite{cvs}. 
We thus follow this strategy in our aim to define and study intrinsic higher-order derivatives and Sobolev spaces.

Our definitions, results and methods apply to arbitrary order weak differentiation and Sobolev spaces, but in order to highlight the essential ideas and issues the exposition in the present introduction is focused on the second-order case.

\bigbreak

We begin by defining \emph{twice colocally weakly differentiable maps} as colocally weakly differentiable maps for which the colocal weak derivative is also colocally weakly differentiable (definition~\ref{defTwice}). 
The notion of colocal weak differentiability was introduced in~\cite{cvs}: a measurable map \(u \colon M \to N\) is \emph{colocally weakly differentiable} if for every function \(f\in C^1_c(N,\R)\), the composite function \(f\circ u : M \to \R\) is weakly differentiable~\cite{cvs}*{\S 1} and its \emph{colocal weak derivative} \(Tu \colon TM \to TN\) is a bundle morphism between the tangent spaces such that for every function \(f\in C^1_c(N,\R)\), the chain rule \(T(f\circ u) = Tf \circ\,Tu\) holds. 
The colocal weak derivative of \(Tu\) is for almost every \(x\in M\), a \emph{double vector bundle morphism} \(T^2 u(x) \colon T^2_x M \to T^2_{u(x)}N\) (proposition~\ref{propoTTU}). 

Although this second-order derivative \(T^2 u\) is a natural object from the point of view of differential geometry, \emph{it is not a vector bundle morphism} that covers \(u\) and it does not appears directly in affine or Riemannian geometry or in geometric analysis or in physical models. 
In order to remedy to this issue, we study how this object \(T^2 u\) is related to objects of affine and Riemannian geometry.

\bigbreak

When \(M\) and \(N\) are \emph{affine manifolds}, that is differentiable manifolds whose tangents manifolds are endowed with a Koszul connection which induces notion of parallel displacement and covariant derivative, the double colocal derivative is described completely by the \emph{colocal weak covariant derivative} \(D_K (Tu)\) (proposition~\ref{covariantTTU}). 

\begin{propoIntro}
\label{propoIntroHornungMoser}
Let \(K_{T^*M \otimes TN}\) be a connection on the vector bundle \(T^*M\otimes TN\). Let \(u \colon M \to N\) be a twice colocally weakly differentiable map. 
Then the map \(Tu \colon M \to T^*M\otimes TN\) has a colocal weak covariant derivative \(D_K (Tu) \colon M \to T^*M \otimes T^*M\otimes TN\) 
if and only if 
the map \(Tu \colon M \to T^*M\otimes TN\) has a colocal weak derivative \(T^2u \colon TM \to T(T^*M\otimes TN)\). 
Furthermore, they are unique and almost everywhere in \(M\) 
\begin{align*}
T^2u &= \operatorname{Vert} \circ\,D_K(Tu) + \operatorname{Hor}_K \circ\,Tu,&
&\text{and }&
D_K(Tu) &= K_{T^*M\otimes TN} \circ T^2u. 
\end{align*}
\end{propoIntro}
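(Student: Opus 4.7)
The plan is to extend to the colocally weakly differentiable setting the classical identities which assert that, for a smooth section $s$ of a vector bundle $E$ equipped with a connection $K$, one has $Ts = \operatorname{Vert} \circ D_K s + \operatorname{Hor}_K \circ s$ and $D_K s = K \circ Ts$. These identities reflect the splitting $TE = \operatorname{Hor}_K \oplus \operatorname{Vert}$ induced by the connection, with $K$ serving as the projection onto the vertical part. Since $\operatorname{Vert}$, $\operatorname{Hor}_K$ and $K$ are smooth bundle morphisms on $E = T^*M \otimes TN$, their composition with a measurable bundle morphism is again a measurable bundle morphism, and they interact well with the chain rule for the colocal weak derivative already developed in the paper.

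For the implication ``$T^2 u$ exists $\Longrightarrow$ $D_K(Tu)$ exists and equals $K \circ T^2 u$,'' I would take $\widetilde{D} := K \circ T^2 u$ as a definition and verify directly that this satisfies the defining property of the colocal weak covariant derivative of $Tu$, by invoking the smooth identity $K \circ Ts = D_K s$ together with the chain rule for $T^2u$ tested against arbitrary smooth compactly supported sections. For the converse, I would set $\widetilde{T} := \operatorname{Vert} \circ D_K(Tu) + \operatorname{Hor}_K \circ Tu$ and show that for every $f \in C^1_c(T^*M \otimes TN, \R)$, the scalar composite $f \circ Tu$ is weakly differentiable on $M$ with derivative $Tf \circ \widetilde{T}$; using the dual splitting of $Tf$ into horizontal and vertical components, this reduces to two checks: the horizontal part is handled by the chain rule applied to the base map $x \mapsto (x, u(x))$, since $Tu$ is a section over this map, while the vertical part is exactly the defining property of the colocal weak covariant derivative $D_K(Tu)$, tested through the smooth map $\operatorname{Vert}$.

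The main obstacle is that the horizontal/vertical decomposition is a pointwise construction tailored to smooth sections, and one has to verify that it remains compatible with the colocal weak framework when only colocal weak differentiability is assumed. I would handle this by working in local trivializations of $T^*M \otimes TN$ where the connection is represented by its Christoffel-type symbols and $\operatorname{Hor}_K$ is given by explicit coordinate expressions, establishing the identities componentwise, and then globalizing by uniqueness of the colocal weak derivative established earlier in the paper and in~\cite{cvs}; this same uniqueness statement immediately yields uniqueness of $T^2 u$ and $D_K(Tu)$.
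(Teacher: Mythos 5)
Your proposal is correct and follows essentially the same route as the paper: both directions come from the splitting identity $\mathrm{id}_{T(T^*M\otimes TN)} = \operatorname{Vert}\circ K_{T^*M\otimes TN} + \operatorname{Hor}_K\circ T\pi$, taking $D_K(Tu) = K_{T^*M\otimes TN}\circ T^2u$ in one direction and $T^2u = \operatorname{Vert}\circ D_K(Tu) + \operatorname{Hor}_K\circ Tu$ in the other, with uniqueness inherited from the uniqueness of colocal weak derivatives (and the injectivity of $\operatorname{Vert}$ together with a suitable choice of $f$). The only divergence is your proposed passage to local trivializations with Christoffel symbols, which is unnecessary: since the definition of the colocal weak covariant derivative already encodes the full identity $T(f\circ Tu) = (Tf)(Tu)[\operatorname{Vert}\circ D_K(Tu) + \operatorname{Hor}_K\circ Tu]$ for every $f\in C^1_c(T^*M\otimes TN,\R)$, the verification is a direct, coordinate-free comparison of the two defining chain-rule properties.
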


Roughly speaking, colocal weak covariant differentiability consists in looking only at the vertical component associated to the connection in the chain rule and relying on the formula 
\[
(Tf)(Tu)[\operatorname{Vert} \circ\,D_K Tu] = T(f \circ T u) - (Tf)(T u) [\operatorname{Hor}_K \circ\, Tu],
\]
where \(\operatorname{Vert}\) is the \emph{vertical lifting} from \(T^* M \otimes TN\) and \(\operatorname{Hor}_K\) is the \emph{horizontal lifting} associated to the connection \(K_{T^*M\otimes TN}\) (see definition~\ref{weakSecondCovariant}).

Proposition~\ref{propoIntroHornungMoser} states the equivalence between a purely differential geometric derivative and an affine geometric one; differentiability turns out to be independent of the choice of the connection.

When \(M\) and \(N\) are \emph{Riemannian manifolds} endowed with their Levi--Civita connections, the colocal weak covariant derivative is a natural concept in geometry and physics and is essentially equivalent to the weak covariant derivative of P. Hornung and R. Moser~\cite{hm}*{definition 2.5} (proposition~\ref{HornungMoserLink}).

Furthermore, this notion of covariant derivative brings us in position to define for every \(p\in [1,+\infty)\), the \emph{intrinsic second-order Sobolev space} (definition~\ref{defW2p}) 
\begin{multline*}
\dot{W}^{2,p}(M,N) 
= \bigl\{ u \colon M \to N \colon u \text{ is twice colocally weakly differentiable} \\
\text{ and } \abs{D_K(Tu)}_{g^*_M \otimes g^*_M \otimes g_N} \in L^p(M)\bigr\},
\end{multline*}
where the norm \(\abs{\cdot}_{g^*_M \otimes g^*_M \otimes g_N}\) is induced by the Riemannian metrics on \(M\) and \(N\). 
Thanks to the iterative nature of our definition of the double colocal differentiability,
we deduce immediately compactness (proposition~\ref{RellichSobolev}) and closure (proposition~\ref{closureProperty}) properties for second-order Sobolev maps.

\bigbreak 

The intrinsic second-order Sobolev spaces might by different from the one by embedding (see proposition~\ref{particularIotaGeneral} below).
As a consequence, we compare the definition of \emph{higher-order Sobolev spaces by embedding}~\eqref{Wkp} with the intrinsic one (definitions~\ref{defW2p} and~\ref{defWkp}). 
The latter definition is not intrinsic and has characterizations in terms of intrinsic spaces in several particular cases (propositions~\ref{iotaCompact}, \ref{particularIotaGeneral} and~\ref{inclusionEmbedding}). 

\begin{propoIntro}
Let \(p \in [1,+\infty)\). 
\begin{enumerate}[(i)]
\item If there exists \(C >0\) such that for every \(v \in \dot{W}^{2,p}(M,\R) \cap L^\infty(M,\R)\),
\begin{equation*}
\int_M \abs{T v}^{2p}_{g^*_M\otimes g_1} \le C \, \bigl(\operatorname{ess \, sup}\{ \abs{v(x)-v(y)} \colon x,y \in M\}\bigr)^{p} \int_M \abs{D_K(T v)}^p_{g^*_M \otimes g^*_M\otimes g_1}
\end{equation*}
and if \(N\) is compact, then for any isometric embedding \(\iota \in C^2(N,\R^\nu)\) 
\[
\dot{W}^{2,p}_\iota(M,N) = \dot{W}^{1,2p}(M,N) \cap \dot{W}^{2,p}(M,N). 
\]
\item For every isometric embedding \(\iota \in C^2(N,\R^\nu)\),
\[
\dot{W}_\iota^{2,p}(M,N) \subseteq \dot{W}^{2,p}(M,N).
\]
\item There exists an isometric embedding \(\iota \in C^2(N,\R^\nu)\) such that 
\[
\dot{W}^{2,p}_\iota(M,N) \subseteq \dot{W}^{1,2p}(M,N) \cap \dot{W}^{2,p}(M,N). 
\] 
\end{enumerate}
\end{propoIntro}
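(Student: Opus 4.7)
The common backbone of all three parts is the Gauss-type orthogonal decomposition for $\iota \circ u$: if $\iota \in C^2(N, \R^\nu)$ is isometric and $u$ is twice colocally weakly differentiable, combining proposition~\ref{propoIntroHornungMoser} with the Gauss formula for the submanifold $\iota(N) \subset \R^\nu$ should give
\[
D^2(\iota \circ u) = D\iota(u)\bigl[D_K(Tu)\bigr] + \mathrm{II}(u)(Tu, Tu),
\]
with summands orthogonal in $\R^\nu$ (tangent versus normal to $\iota(N)$) and, since $D\iota$ is a fibrewise linear isometry,
\[
\bigl\lvert D^2(\iota \circ u)\bigr\rvert^2 = \bigl\lvert D_K(Tu)\bigr\rvert^2 + \bigl\lvert \mathrm{II}(Tu, Tu)\bigr\rvert^2.
\]
The plan is to establish this identity first in the colocal weak sense and then derive the three statements from it.

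For (ii), the identity gives $\lvert D_K(Tu)\rvert \le \lvert D^2(\iota \circ u)\rvert \in L^p$, so $u \in \dot{W}^{2,p}(M,N)$; twice colocal weak differentiability of $u$ is obtained by composing any $f \in C^2_c(N, \R)$ with $u$ through a $C^2$ extension of $f \circ \iota^{-1}\colon \iota(N) \to \R$ to a tubular neighborhood of $\iota(N)$, so that $f \circ u$ inherits its weak regularity from $\iota \circ u$.

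For (i), the forward inclusion uses (ii) for the $\dot{W}^{2,p}$ part; since $N$ is compact each coordinate of $\iota \circ u$ lies in $\dot{W}^{2,p}(M, \R) \cap L^\infty(M, \R)$ with finite essential oscillation, so the hypothesised Gagliardo--Nirenberg inequality applied componentwise yields $\lvert D(\iota \circ u)\rvert \in L^{2p}$, and $\lvert D(\iota \circ u)\rvert = \lvert Tu\rvert$ (by isometry of $\iota$) gives $u \in \dot{W}^{1,2p}$. The reverse inclusion is direct from the identity: $\lvert D^2(\iota \circ u)\rvert \le \lvert D_K(Tu)\rvert + \lVert \mathrm{II}\rVert_\infty \lvert Tu\rvert^2$, where $\lVert \mathrm{II}\rVert_\infty$ is finite by compactness of $N$ and $C^2$-regularity of $\iota$; both summands then belong to $L^p$ thanks respectively to $u \in \dot{W}^{2,p}$ and to $u \in \dot{W}^{1,2p}$ (which gives $\lvert Tu\rvert^2 \in L^p$).

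Part (iii) is the most delicate. The strategy is to construct an isometric embedding whose second fundamental form satisfies a uniform pointwise lower bound $\lvert \mathrm{II}(v, v)\rvert \ge c\lvert v\rvert^2$, so that the Gauss identity alone yields $\lvert D^2(\iota \circ u)\rvert \ge c\,\lvert Tu\rvert^2$ pointwise and hence the desired inclusion. A natural candidate is an isometric embedding into a Euclidean sphere $S^{\nu-1}(R) \subset \R^\nu$: the second fundamental form of the sphere in $\R^\nu$ contributes a radial normal vector of norm $R^{-1}\lvert v\rvert^2$, which is orthogonal to the intrinsic second fundamental form of $N$ inside the sphere and hence provides the lower bound. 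For compact $N$ one obtains such an embedding by augmenting a Nash isometric embedding of $N$ in some $\R^m$ with extra coordinates placing the image on a sphere of sufficiently large radius. The main obstacle is precisely this construction in the non-compact case, where producing an isometric embedding with uniformly non-degenerate second fundamental form while preserving global injectivity requires a careful controlled modification of Nash's embedding rather than a direct spherical realization.
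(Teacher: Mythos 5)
Your part (ii) and the forward inclusion of part (i) follow essentially the paper's route (lemma~\ref{iotaKTimes} via a tubular neighbourhood, the orthogonal Gauss decomposition, and the Gagliardo--Nirenberg hypothesis applied to the components of \(\iota\circ u\)), and are fine. The two other steps have genuine gaps.

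For the reverse inclusion in (i) you write that the estimate \(\abs{D^2(\iota\circ u)}\le \abs{D_K(Tu)}+\norm{\mathrm{II}}_\infty\abs{Tu}^2\) makes the inclusion ``direct'', but this presupposes that \(\iota\circ u\) is twice \emph{weakly} differentiable, which is exactly the non-trivial point: membership in \(\dot{W}^{2,p}_\iota(M,N)\) requires \(\iota\circ u\in W^{2,1}_{\mathrm{loc}}(M,\R^\nu)\), and twice colocal weak differentiability of \(u\) does not imply this (example~\ref{exampleNoChainRule} gives a map into \(\mathbb{S}^1\) for which the second-order chain rule fails). The paper closes this gap with the chain-rule criterion of propositions~\ref{equivalentChainRule} and~\ref{kChainRule}: one first checks, via Young's inequality, that \(u\in \dot{W}^{1,2p}\cap\dot{W}^{2,p}\) forces the local integrability of the double norm \(\double{T^2u}\) on sets \(u^{-1}(L)\cap K\), and only then (using compactness of \(N\)) concludes that \(\iota\circ u\) is twice weakly differentiable, after which your pointwise bound applies. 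Establishing the Gauss identity ``in the colocal weak sense'' does not substitute for this step, since the quantity \(D^2(\iota\circ u)\) you estimate must exist as a weak derivative in the first place.

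For (iii) your strategy (an embedding with a uniform lower bound \(\abs{\mathrm{II}(v,v)}\ge c\abs{v}^2\), then the orthogonal decomposition) is the same as the paper's, but the embedding is not actually constructed. ``Augmenting a Nash embedding with extra coordinates placing the image on a sphere'' does not preserve isometry as described (lifting onto a sphere by adding a coordinate \(\sqrt{R^2-\abs{x}^2}\) changes the induced metric), and the statement does not assume \(N\) compact, so the case you explicitly leave open is part of what must be proved. The paper avoids the spherical realization altogether: it post-composes an arbitrary Nash embedding \(\iota_1\) with the explicit helix map \(\iota_2(t)=(\lambda t_i,\gamma\cos(\mu t_i),\gamma\sin(\mu t_i))_{1\le i\le\nu}\), \(\lambda^2+\gamma^2\mu^2=1\), which is isometric, injective, and has second fundamental form bounded below by \(\gamma\mu^2\abs{v}^2/\sqrt{\nu}\) uniformly, for any (compact or not) target \(N\) (proposition~\ref{particularIotaGeneral}). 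Replacing your unfinished spherical construction by this helix embedding, or by any concrete construction with the same uniform lower bound, is what is needed to complete part (iii).
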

As a consequence of the latter proposition, we prove that for every \(r>0\), if \(1 \le 2p < m\) and if the manifold \(N\) is compact,
\[
\dot{W}_\iota^{2,p}(\mathbb{B}^m_r,N) \subsetneq \dot{W}^{2,p}(\mathbb{B}^m_r,N)
\]
by exhibiting a map \(u \in \dot{W}^{2,p}(\mathbb{B}^m_r,N)\) such that 
\begin{align*}
 \int_{\mathbb{B}^m_r} \abs{D_K(Tu)}^p_{g^*_m \otimes g^*_m \otimes g_N} &< + \infty &&\text{but} &&\int_{\mathbb{B}^m_r} \abs{Tu}^{2p}_{g^*_m \otimes g_N} = +\infty, 
\end{align*}
that is, \(u \notin \dot{W}^{2,p}_\iota(\mathbb{B}^m_r,N)\) (proposition~\ref{sobolevNoGN2}). 
This example implies the failure of the Gagliardo--Nirenberg interpolation inequalities~\citelist{\cite{gagliardo}\cite{nirenberg}} for those spaces. 
In the general case where the target manifold \(N\) is not compact, this leads to an open question~\ref{openQuestionGN}: does exist a constant \(C >0\) such that for every \(u\in \dot{W}^{2,p}(M,N) \cap L^\infty(M,N)\),
\[
\int_M \abs{Tu}_{g^*_M \otimes g_N}^{2p} \le C \, \bigl(\operatorname{ess \, sup} \{ d_N(u(x),u(y)) \colon x,y\in M\}\bigr)^p \int_M \abs{D_K(Tu)}_{g^*_M\otimes g^*_M \otimes g_N}^p?
\]
The answer should involve in particular the geometry of \(M\) and \(N\). 

We  consider also the question of density of the set of smooth maps \(C^k (M, N)\) in \(\dot{W}^{k, p} (M, N)\) when \(M\) and \(N\) are compact manifolds. 
In the first-order case \(k=1\), the density of smooth maps is known to be equivalent to some homotopy invariant of the pair \((M, N)\)~\citelist{\cite{hl}\cite{bethuel}}. 
Although these results are proved for embedded manifolds, the Sobolev space and the convergence are intrinsic~\cite{cvs} and the density result is thus intrinsic. 
In the higher-order case on the ball, the set of smooth maps \(C^2 (\overline{\Bset}^m, N)\) is dense in the space \(\dot{W}^{2, p}_{\iota} (\mathbb{B}^m, N)\) if and only if 
\(\pi_{\floor{2p}} \simeq \{0\}\)~\cite{bpvs}. Since the set \(\dot{W}^{2, p} (\mathbb{B}^m,N)\) is larger than \(\dot{W}^{2, p}_{\iota} (\mathbb{B}^m, N)\) but is endowed with a weaker notion of convergence (proposition~\ref{inclusionEmbedding}), it is not immediate that the topological condition is either necessary or sufficient for the density of smooth maps. 

We prove that the topological condition is still a necessary condition for the strong and weak-bounded approximation by smooth maps (proposition~\ref{convergenceHomotopy}).

\begin{propoIntro}
Let \(N\) be a smooth connected compact Riemannian manifold and let \(p\in [1,+\infty)\) so that \(1 \le 2p < m\). 
\begin{enumerate}[(i)]
\item If for every \(u \in \dot{W}^{2,p}(\mathbb{B}^m, N)\), there exists a sequence \((u_\ell)_{\ell\in \N}\) in \(C^\infty(\overline{\mathbb{B}}^m, N)\) that is bounded in \(\dot{W}^{2,p}(\mathbb{B}^m,N)\) and that converges almost everywhere, and if \(2p \notin \N\), then \(\pi_{\lfloor 2p \rfloor}(N) \simeq \{0\}\). 
\item If for every \(u \in \dot{W}^{2,p}(\mathbb{B}^m, N)\), there exists a sequence \((u_\ell)_{\ell\in \N}\) in \(C^\infty(\overline{\mathbb{B}}^m, N)\) that converges strongly to \(u\) in \(\dot{W}^{2,p}(\mathbb{B}^m,N)\), and if \(2p \in \N\), then \(\pi_{2p}(N) \simeq \{0\}\). 
\end{enumerate}
\end{propoIntro}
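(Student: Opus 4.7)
I would prove both parts by contrapositive: assuming the relevant homotopy group is nontrivial, I exhibit a map \(u \in \dot{W}^{2,p}(\mathbb{B}^m,N)\) for which no smooth approximating sequence of the stated type can exist. Set \(k = \lfloor 2p \rfloor\); since \(2p < m\), one has \(k+1 \le m\). Choose a smooth representative \(\varphi \in C^\infty(\mathbb{S}^k, N)\) of a nontrivial class in \(\pi_k(N)\), write \(x=(y,z) \in \R^{k+1} \times \R^{m-k-1}\), and set \(u(y,z) = \varphi(y/\abs{y})\). A direct calculation gives \(\abs{Tu} \lesssim \abs{y}^{-1}\) and \(\abs{D_K Tu} \lesssim \abs{y}^{-2}\) on the regular set \(\{y \neq 0\}\); polar integration on \(\R^{k+1}\) then shows \(\int_{\mathbb{B}^m} \abs{D_K Tu}^p < \infty\) because \(2p < k+1\) in both cases, so \(u \in \dot{W}^{2,p}(\mathbb{B}^m,N)\).

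The contradiction comes from a slicing/homotopy obstruction. For small \(z\) and \(r > 0\), the \(k\)-sphere \(\Sigma_{z,r} = \{\abs{y}=r\} \times \{z\}\) lies in the regular set of \(u\), and \(u|_{\Sigma_{z,r}}\) is, up to dilation, the map \(\varphi\), hence represents the nontrivial class. On the other hand, any smooth \(u_\ell\) restricts on \(\Sigma_{z,r}\) to a map that extends smoothly to the \((k+1)\)-ball bounded by \(\Sigma_{z,r}\) inside \(\R^{k+1} \times \{z\}\), so \(u_\ell|_{\Sigma_{z,r}}\) is nullhomotopic. The proof therefore reduces to exhibiting a single slice on which \(u_\ell|_{\Sigma_{z,r}} \to u|_{\Sigma_{z,r}}\) in a topology preserving the homotopy class.

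In case~(i), \(2p \notin \N\) yields \(k < 2p\), hence the compact embedding \(W^{2,p}(\Sigma_{z,r}) \hookrightarrow C^0\); a Fubini--Fatou selection combined with a diagonal extraction turns the bounded intrinsic \(\dot{W}^{2,p}\) norms and the a.e.\ convergence into a slice on which a subsequence of \(u_\ell|_{\Sigma_{z,r}}\) converges uniformly to \(u|_{\Sigma_{z,r}}\) --- impossible, as the uniform limit of nullhomotopic maps represents the trivial class. In case~(ii), \(k = 2p\) is borderline and \(C^0\) convergence is unavailable, but the stronger hypothesis of strong \(\dot{W}^{2,p}\) convergence combined with Fubini yields a slice on which \(u_\ell|_{\Sigma_{z,r}} \to u|_{\Sigma_{z,r}}\) strongly in \(W^{2,p}\), hence in \(\operatorname{VMO}(\Sigma_{z,r})\) via the borderline embedding \(W^{2,p}(\mathbb{S}^{2p}) \hookrightarrow \operatorname{VMO}\); the Brezis--Nirenberg theory of VMO degree/homotopy then transports the nullhomotopy to the limit.

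The main obstacle is the Fubini step in the intrinsic framework. By proposition~\ref{particularIotaGeneral}, boundedness in \(\dot{W}^{2,p}\) need not imply boundedness in \(\dot{W}^{2,p}_\iota\), so one cannot directly slice the \(L^p\) norm of \(D^2(\iota\circ u_\ell)\) via Fubini using only the intrinsic hypothesis. The route I would take is to localize to a compact annular shell \(\{r_1 \le \abs{y} \le r_2,\ \abs{z}\le \rho\}\) contained in the regular set of \(u\), where compactness of \(N\) gives a uniform \(L^\infty\) bound on \(\iota\circ u_\ell\), and to combine the slice-wise bound on \(\abs{D_K Tu_\ell}^p\) with the pointwise comparison \(\abs{D^2(\iota\circ u_\ell)} \lesssim \abs{D_K Tu_\ell} + \abs{Tu_\ell}^2\), the scalar Gagliardo--Nirenberg interpolation on slices, and a Chebyshev/diagonal selection of good radii; a slight quadratic absorption issue forces one to choose \(r_2 - r_1\) small enough that the scalar interpolation constant on \(\Sigma_{z,r}\) beats the second-fundamental-form constant of \(\iota(N)\).
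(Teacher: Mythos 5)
Your construction of \(u\), the slicing strategy, and the topological endgame (restrictions of smooth maps to the spheres \(\Sigma_{z,r}\) are nullhomotopic, so it suffices to transfer the homotopy class to the limit on one good slice) all match the paper's proof of proposition~\ref{convergenceHomotopy}. The genuine gap is in the step you yourself flag as the main obstacle: the way you propose to obtain the slice-wise bound (case (i)) or convergence (case (ii)) of \(\abs{Tu_\ell}\) in \(L^{2p}\) cannot work. You want to write \(\abs{D^2(\iota\circ u_\ell)} \lesssim \abs{D_K(Tu_\ell)} + \abs{Tu_\ell}^2\), apply the scalar Gagliardo--Nirenberg inequality \(\norm{Dv_\ell}_{L^{2p}}^2 \lesssim \norm{v_\ell}_{L^\infty}\norm{D^2 v_\ell}_{L^p} + \dotsb\) with \(v_\ell = \iota\circ u_\ell\), and absorb the quadratic term by shrinking the shell. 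But the multiplicative Gagliardo--Nirenberg constant is scale-invariant (under \(x \mapsto \lambda x\) both sides of \(\norm{Dv}_{L^{2p}}^2 \le C \norm{v}_{L^\infty}\norm{D^2v}_{L^p}\) scale by \(\lambda^{2-k/p}\)), so choosing \(r_2-r_1\) small does not make it small, while \(\norm{\iota\circ u_\ell}_{L^\infty}\) and the second-fundamental-form constant of \(\iota(N)\) are fixed by \(N\) and \(\iota\); the product you need to be \(<1\) is therefore a fixed quantity with no smallness parameter. Worse, no inequality bounding \(\norm{Tu_\ell}_{L^{2p}(\Sigma)}\) by \(\norm{D_K(Tu_\ell)}_{L^p(\Sigma)}\) and an \(L^\infty\) bound alone can be true: maps of the form \(\gamma\circ w_\ell\) wrapping many times around a closed geodesic \(\gamma\) of \(N\) have vanishing (or uniformly small) intrinsic second covariant derivative, bounded image, and arbitrarily large gradient --- this is exactly the failure of intrinsic Gagliardo--Nirenberg exhibited in propositions~\ref{sobolevNoGN} and~\ref{sobolevNoGN2}, and it is why \(\dot{W}^{2,p}\)-boundedness does not pass to \(\dot{W}^{2,p}_\iota\)-boundedness in the first place.

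The missing ingredient is that the gradient bound must itself use the convergence hypothesis, not only the energy bound plus compactness of \(N\). The paper does this in lemma~\ref{lemmeConvergence}: since \(Tu_\ell \to Tu\) locally in measure and \(\abs{Tu}\) is finite almost everywhere, there are \(\gamma,\varepsilon>0\) such that the sets \(A_\ell = \{\abs{Tu_\ell}\le\gamma\}\) have measure \(>\varepsilon\) on the slice, and then the Poincar\'e inequality of lemma~\ref{Poincare} applied to the scalar function \(\abs{Tu_\ell}\) (whose weak gradient is dominated by \(\abs{D^2_K u_\ell}\) by lemma~\ref{absTu}) gives \(\norm{Tu_\ell}_{L^{2p}(\Sigma)} \lesssim \norm{D^2_K u_\ell}_{L^p(\Sigma)} + \gamma\), entirely intrinsically and with no absorption. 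With that lemma in hand, your case (i) endgame (uniform convergence on a slice via \(W^{1,2p}\) control is replaced in the paper by White's theorem on homotopy classes of weakly bounded \(W^{1,2p}(\mathbb{S}^{\floor{2p}},N)\) sequences) and your case (ii) endgame (you invoke VMO degree theory, the paper instead upgrades to strong \(\dot{W}^{1,2p}\) convergence on the slice and uses White's result for strong convergence) would both go through; but as written, your route to the slice-wise gradient control is not repairable by a smallness-of-domain argument.
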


Giving some sufficient condition for the approximation problems seems to require new intrinsic approximation tools that would go beyond the scope of the present work. 

\bigbreak

Since Sobolev spaces by embedding~\eqref{Wkp} are defined by composition with a map \(\iota \in C^2(N,\R^\nu)\) which is an immersion, we investigate for the twice colocal weak differentiability the chain rule in view of the comparison of  the definition by embedding and the intrinsic one. 
Moreover, since the first-order colocal weak differentiability was defined by the chain rule, 
another natural candidate definition for the second-order differentiability could have been some second-order chain-rule condition. Such a definition turns out to be stronger and ill-behaved. 
Given a twice colocally weakly differentiable map \(u \colon M \to N\), we ask whether for every \(f\in C_c^2(N,\R)\), the map \(f\circ u\) is twice weakly differentiable and whether the chain rule \(T^2(f\circ u) = T^2 f \circ T^2 u\) holds. 
The chain rule \emph{does not hold in general} (example~\ref{exampleNoChainRule}); however, it holds under an additional necessary and sufficient integrability condition on colocal weak derivatives of bundle morphism (proposition~\ref{equivalentChainRule}).
\begin{propoIntro} 
Let \(u\colon M \to N\) be a twice colocally weakly differentiable map. The following statements are equivalent. 
\begin{enumerate}[(i)]
\item For every \(f\in C^2_c(N,\R)\), the map \(f\circ u\) is twice weakly differentiable and almost everywhere in \(M\) 
\[
T^2(f \circ u) = T^2 f \circ T^2 u,
\]
\item\label{conditionDoubleNorm} for every compact sets \(K \subseteq M\) and \(L \subseteq N\), 
\begin{equation*}
\int_{u^{-1}(L) \cap K} \double{T^2 u} < + \infty,
\end{equation*}
\item for every \(h \in C^1(T^*M\otimes TN,\R)\) that is linear with respect to \(Tu\) and has compact support with respect to \(u\), the map \(h \circ\,Tu\) is weakly differentiable. 
\end{enumerate}
\end{propoIntro}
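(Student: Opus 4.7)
I would prove the equivalence along the cycle (i) $\Rightarrow$ (ii) $\Rightarrow$ (iii) $\Rightarrow$ (i). The map $T^2u$ always exists as a double vector bundle morphism by proposition~\ref{propoTTU}, but its pointwise norm $\double{T^2u}$ need not be locally integrable on $M$; both (ii) and (iii) should be read as integrability conditions on $T^2u$ from slightly different perspectives, and the second-order chain rule (i) is their consequence.

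For (i) $\Rightarrow$ (ii), I would fix compact sets $K\subseteq M$ and $L\subseteq N$, cover $L$ by finitely many charts $(V_\alpha, \psi_\alpha)$ of $N$, and choose cutoffs $\chi_\alpha\in C^2_c(V_\alpha,[0,1])$ with $\sum_\alpha \chi_\alpha = 1$ on a neighbourhood of $L$. For each coordinate index $a$, the function $f^\alpha_a = \chi_\alpha\psi^a_\alpha$ lies in $C^2_c(N,\R)$, so (i) provides $T^2(f^\alpha_a\circ u)\in L^1_{\mathrm{loc}}(M)$ together with $T^2(f^\alpha_a\circ u) = T^2 f^\alpha_a\circ T^2u$. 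On the open set $\{\chi_\alpha=1\}$ the Hessian of $f^\alpha_a$ in the chart vanishes, so the coordinate components of $T^2u$ can be read off directly from the family $\{T^2(f^\alpha_a\circ u)\}_{\alpha,a}$. Summing over the finite cover yields a pointwise estimate $\double{T^2u}\le C\sum_{\alpha,a}\abs{T^2(f^\alpha_a\circ u)}$ on $K\cap u^{-1}(L)$, hence the required integrability.

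For (ii) $\Rightarrow$ (iii), let $h\in C^1(T^*M\otimes TN,\R)$ be fibre-linear in $A$ and compactly supported in the $N$-direction. In a product of charts, write $h(x,y,A)=h_{ij}(x,y)A^{ij}$ and formally compute
\begin{equation*}
\partial_k(h\circ Tu) = \bigl(\partial_{x_k}h_{ij} + \partial_{y_a}h_{ij}\,\partial_k u^a\bigr)(x,u(x))(Tu)^{ij} + h_{ij}(x,u(x))\,\partial_k(Tu)^{ij}.
\end{equation*}
The first summand lies in $L^1_{\mathrm{loc}}$ by first-order colocal weak differentiability of $u$ on $u^{-1}(\operatorname{supp}_y h)$; the second is controlled on the same set by $\double{T^2u}$ and is integrable by (ii). To upgrade this to a genuine weak derivative of $h\circ Tu$, I would mollify $u$ to $u_\varepsilon$, use the smooth chain rule to compute $\partial_k(h\circ Tu_\varepsilon)$, and pass to the limit via dominated convergence, with (ii) producing the dominating function.

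For (iii) $\Rightarrow$ (i), given $f\in C^2_c(N,\R)$ and a coordinate vector field $X$ on $M$, set $h^X_f(x,y,A)=\psh{df_y}{A(X(x))}$; this lies in $C^1(T^*M\otimes TN,\R)$, is fibre-linear in $A$, and is compactly supported in $y$. By (iii), $h^X_f\circ Tu = X(f\circ u)$ is weakly differentiable for each $X$, so $f\circ u$ is twice weakly differentiable. Computing this weak derivative directly and comparing with the horizontal/vertical decomposition of $T^2u$ in proposition~\ref{propoIntroHornungMoser} identifies it with $T^2f\circ T^2u$. The hardest step will be (ii) $\Rightarrow$ (iii): promoting the formal chain-rule expression to an actual weak derivative of $h\circ Tu$, without assuming (i) a priori, requires a careful approximation argument in which the integrability coming from (ii) is exactly what legitimises the passage to the limit.
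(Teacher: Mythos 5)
Your cycle $(i)\Rightarrow(ii)\Rightarrow(iii)\Rightarrow(i)$ is a legitimate reorganisation (the paper proves $(i)\Rightarrow(iii)\Rightarrow(ii)\Rightarrow(i)$ in this labelling), and your $(iii)\Rightarrow(i)$ leg is essentially correct and close to what the paper does. But the two remaining legs each have a genuine gap, and both gaps sit exactly at the point that makes the proposition nontrivial: the quadratic term \(\abs{Tu}^2\) hidden inside the double norm. In $(i)\Rightarrow(ii)$, locally \(\double{T^2u}\) is comparable to \(\abs{Du_\alpha}^2+\abs{Du_\alpha}+\abs{D^2u_\alpha}\), so your pointwise estimate \(\double{T^2u}\le C\sum_{\alpha,a}\abs{T^2(f^\alpha_a\circ u)}\) only yields the required integrability if the right-hand side is interpreted as the double norm of \(T^2(f^\alpha_a\circ u)\), which again contains \(\abs{D(f^\alpha_a\circ u)}^2\). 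Statement (i) gives \(f^\alpha_a\circ u\in W^{2,1}_{\mathrm{loc}}\), i.e. integrability of \(D(f^\alpha_a\circ u)\) and \(D^2(f^\alpha_a\circ u)\), but \emph{not} of the square of the gradient; that extra integrability is precisely what fails in example~\ref{exampleNoChainRule} and is the whole content of condition (ii). The missing ingredient is the Gagliardo--Nirenberg interpolation inequality applied to the bounded maps \(f^\alpha_a\circ u\in W^{2,1}_{\mathrm{loc}}\cap L^\infty\), which is how the paper controls this term (lemma~\ref{likeGN} and the second step of the proof of proposition~\ref{equivalentChainRule}). Without invoking it, ``hence the required integrability'' is not justified.

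In $(ii)\Rightarrow(iii)$ there are two problems. First, the claim that the summand \(\bigl(\partial_{x_k}h_{ij}+\partial_{y_a}h_{ij}\,\partial_k u^a\bigr)(Tu)^{ij}\) is integrable ``by first-order colocal weak differentiability'' is wrong: colocal weak differentiability gives no local integrability of \(Tu\) at all (this is the point of the examples after definition~\ref{defTwice}); both the \(\abs{Tu}\)- and \(\abs{Tu}^2\)-type terms must be extracted from (ii), using that the double norm dominates them on \(u^{-1}(\supp_N h)\cap K\). Second, and more seriously, the proposed approximation scheme does not exist as described: \(u\) is \(N\)-valued, so it cannot be mollified, and its Euclidean local representations \(\varphi\circ u\) are not yet known to lie in \(W^{2,1}_{\mathrm{loc}}\) at this stage of your cycle --- that is essentially statement (i), which you prove only afterwards --- so there is nothing on which to run a smooth chain rule and dominated convergence. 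The workable argument is the one the paper uses for its implication from the integrability condition to the chain rule: truncate the \emph{test function} rather than the map, e.g. replace \(h\) by \(\theta\bigl(\abs{\cdot}_{g_{T^*M\otimes TN}}/(1+\ell)\bigr)\,h\), which is Lipschitz with compact support so that its composition with \(Tu\) is weakly differentiable by the first-order theory~\cite{cvs}*{proposition 2.2}, establish the uniform bound of the derivatives by \(C\,\double{T^2u}\) on \(u^{-1}(\supp_N h)\cap K\), and pass to the limit by uniform integrability and weak compactness in \(L^1\) (Dunford--Pettis), not by domination of mollified maps. With these two repairs --- Gagliardo--Nirenberg in the first leg, truncation of the test function plus weak \(L^1\) compactness in the second --- your outline becomes a correct proof.
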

The condition~\eqref{conditionDoubleNorm} involves a \emph{double norm} \(\double{\cdot}\) of the double vector bundle morphism \(T^2u\) (definition~\ref{defDoubleNorm}). 
Such a double norm is a seminorm on fibers over both vector bundle structures of the double vector bundle. 

\numberwithin{equation}{section}

\section{Colocal weak differentiability}
\label{sectionTwiceDifferentiable}

\subsection{Double colocal weak differentiability on differentiable manifolds}
We assume that \(M\) and \(N\) are \emph{differentiable manifolds} of class \(C^2\) of respective dimensions \(m\) and \(n\). 
As usual, the manifolds \(M\) and \(N\) satisfy the Hausdorff separation property and have a countable basis~\citelist{\cite{docarmo}*{\S 0.5}\cite{hirsch}*{\S 1.5}}.
We study the notion of second-order differentiability for measurable maps from \(M\) to \(N\).

As a preliminary we recall various definitions of local measure-theoretical notions on a manifold. A set \(E \subset M\) is \emph{negligible} if for every \(x \in M\) there exists a local chart \(\psi : V \subseteq M  \to \R^m\) -- that is \(\psi : V \subseteq M \to \psi (V) \subseteq \R^m\) is a diffeomorphism -- such that \(x \in V\) and the set \(\psi (E \cap V) \subseteq \R^m\) is negligible.
A map \(u : M \to N\) is \emph{measurable} if for every point \(x \in M\) there exists a local chart \(\psi : V \subseteq M  \to \R^m\) such that \(x \in V\) and the map \(u \circ \psi^{-1}\) is measurable~\citelist{\cite{hirsch}*{\S 3.1}\cite{derham}*{\S 3}}.
A function \(u : M \to \R\) is \emph{locally integrable} if for every \(x \in M\) there exists a local chart \(\psi : V \subseteq M \to \R^m\) such that \(x \in V\) and \(u \circ \psi^{-1}\) is integrable on \(\psi (V)\)~\cite{hormander}*{\S 6.3}.
Similarly, a locally integrable map \(u : M \to \R\) is \emph{weakly differentiable} if for every point \(x \in M\) there exists a local chart \(\psi : V \subseteq M  \to \R^m\) such that \(x \in V\) and the map \(u \circ \psi^{-1}\) is weakly differentiable. 
All these notions are independent on any particular metric or measure on the manifold \(M\) and are invariant under diffeomorphisms of \(M\).

\medbreak

Given a \emph{colocally weakly differentiable map} \(u\colon M \to N\)~\cite{cvs}*{definition 1.1},
by existence and uniqueness of the colocal weak derivative~\cite{cvs}*{proposition 1.5}, there exists a unique measurable bundle morphism \(Tu \colon TM \to TN\) such that for every \(f\in C^1_c(N,\R)\), \(T(f\circ u) = Tf \circ\,Tu\) almost everywhere in \(M\), where \((TM, \pi_M, M)\) is the \emph{tangent bundle} over \(M\), that is,
\[
TM= \bigcup_{x\in M} \{x\} \times T_x M ,
\]
\(\pi_M \colon TM \to M\) is the natural projection and for every point \(x\in M\), the fiber \(\pi_M^{-1} (\{x\})\) is isomorphic to the space \(\R^m\), 
and where a map \(\upsilon \colon TM  \to TN \) is a \emph{bundle morphism} that covers the map \(u \colon M \to N\) if the diagram
\[
 \xymatrix{
    TM \ar[r]^\upsilon \ar[d]_{\pi_M} & TN \ar[d]^{\pi_N} \\
    M \ar[r]_u & N
  }
\]
commutes, that is, we have \(\pi_N \circ \upsilon = u \circ \pi_M\) on \(TM\), and for every \(x \in M\), the map \(\upsilon(x) \colon \pi^{-1}_M(\{x\}) \simeq T_x M \to \pi^{-1}_N(\{u(x)\}) \simeq T_{u(x)} M\) is linear. 
As a consequence, the map \(Tu\) can be viewed as a measurable map from \(M\) to the bundle 
\footnote{The tensor product \((T^*M \otimes TN, \pi_{M\times N}, M \times N)\) of the bundles \(T^*M\) and \(TN\) is a vector bundle over \(M \times N\) such that for every \((x,y) \in M \times N\), \(\pi_{M\times N}^{-1}(\{(x,y)\}) = T^*_x M \otimes T_y N\).} 
\(T^*M\otimes TN\), that is, for almost every \(x\in M\), we have \(Tu(x) \in T^*_x M \otimes T_{u(x)}N = \mathcal{L}(T_x M, T_{u(x)} N)\). 
Since both \(M\) and \(N\) are manifolds of class \(C^2\), the vector bundle \(T^*M\otimes TN\) has a manifold structure of class \(C^1\) and so we can define the notion of twice colocally weakly differentiable maps recursively. 

\begin{de}\label{defTwice}
A map \(u \colon M \to N\) is \emph{twice colocally weakly differentiable} whenever \(u\) is colocally weakly differentiable and its colocal weak derivative \(T u \colon M \to T^*M\otimes TN\) is itself colocally weakly differentiable. 
\end{de}

If \(u\in W^{2,1}_\mathrm{loc}(\R^m,\R^n)\), that is, if the map \(u\colon \R^m \to \R^n \) is twice weakly differentiable, then \(u\) is twice colocally weakly differentiable. Indeed, the weak derivative \(Du \colon \R^m \to \mathcal{L}(\R^m,\R^n)\) is weakly differentiable and so for every \(f\in C^1_c(\mathcal{L}(\R^m, \R^n),\R)\), the composite map \(f\circ D u\) is weakly differentiable. 
But, as in the case of colocally weakly differentiable maps, \emph{the converse is false}. 
For example, the function \(u\colon \R^m \to \R\) defined for every \(x \in \R^m \setminus \{0\}\) by \(u(x) = \abs{x}^{-\alpha}\) does not belong to \(W^{2,1}_\mathrm{loc}(\R^m)\) for any \(\alpha > m - 2\), but is twice colocally weakly differentiable for every \(\alpha \in \R\). 
The boundedness of \(u\) is not essential: indeed for every \(\alpha < m - 1\), the function \((\cos \circ\, u, \sin \circ\, u) : \R^m \to \mathbb{S}^1 \subseteq \R^2\) is twice colocally weakly differentiable but it does not belong to \(W^{2,1}_\mathrm{loc}(\R^m,\R^2)\) if \(\alpha > \frac{m - 2}{2}\). 

The natural framework of the colocal weak derivative of a bundle morphism involves the \emph{second-order tangent bundle} \(T^2 M := T(TM)\)~\citelist{\cite{bertram}*{\S 7.1}\cite{michor}*{\S 8.13}}; the natural projection \(\pi^2_M \colon T^2M \to M\) \emph{is not a vector bundle} but the tangent manifold \(T^2 M\) has two natural vector bundle structures as a bundle over \(TM\)~\citelist{\cite{bertram}*{\S 9.1}\cite{dieudonne}*{\S 16.15.7}\cite{mackenzie}*{example 1.3}\cite{michor}*{\S 8.12}}, that is, 
\begin{compactitem}[--]
\item the canonical vector bundle structure of the tangent bundle \(\pi_{TM} \colon T^2 M \to TM\),
\item the tangent vector bundle structure on the tangent fibration \(T\pi_M \colon T(TM) \to TM\);
\end{compactitem}
hence, \(T^2 M\) is a \emph{double vector bundle}~\citelist{\cite{gr}*{theorem 3.1}\cite{mackenzie}*{definition 1.1}}, that is, a system of four vector bundle structures such that the following diagram commutes 
\begin{equation*}
 \xymatrix{
    T^2 M \ar[r]^{\pi_{TM}} \ar[d]_{T\pi_M} & TM \ar[d]^{\pi_M } \\
    TM \ar[r]_{\pi_M} & M
  }
\end{equation*}
and such that each of the four structure maps (namely, the bundle projection, the zero section, addition and scalar multiplication) of each vector bundle structure on \(T^2 M\) is a morphism of vector bundles with respect to the other structure; 
the double vector bundle \(T^2M\) has one more natural structure, the \emph{canonical flip} \(\kappa_M \colon T^2M \to T^2M\) which is a linear isomorphism from the bundle \((T^2M, T\pi_M, TM)\) to the bundle \((T^2M, \pi_{TM}, TM)\)~\cite{michor}*{\S 8.13}; 
the space of all \emph{double vector bundle morphisms} \(\mathrm{Mor}(T^2 M, T^2 N)\) is defined by requiring that for every \((x,y)\in M \times N\), the fiber \(\mathrm{Mor}(T^2 M, T^2 N)_{x,y}\) consists of maps \(\upsilon \colon T^2_x M \to T^2_y N\) and \(\upsilon_1, \upsilon_2 \in \mathcal{L}(T_x M, T_y N)\) such that \(\pi_{TN} \circ \upsilon = \upsilon_1 \circ \pi_{TM}\) and \(T\pi_N \circ \upsilon = \upsilon_2 \circ T\pi_M\); 
we note that \(\mathrm{Mor}(T^2M, T^2N)\) is a double vector bundle with the following commutative diagram
\begin{center}
\begin{tikzpicture}[back line/.style={densely dotted},scale=3]
\node (P0) at (0,0) {$\Mor(T^2M, T^2N)$};
\node (P1) at (0,-0.5) {$T^*M \otimes TN $} ;
\node (P2) at (1.5,0) {$T^*M \otimes TN $};
\node (P3) at (1.5,-0.5) {$M \times N$};

\draw 
(P0) edge[->] node[above] {\tiny{$\pi^1_{T^*M\otimes TN}$}} (P2)
(P0) edge[->] node[left] {\tiny{$\pi^2_{T^*M\otimes TN}$}} (P1)

(P1) edge[->] node[below] {\tiny{$\pi_{M\times N}$}} (P3)
(P2) edge[->] node[right] {\tiny{$\pi_{M\times N}$}} (P3);
\end{tikzpicture}
\end{center}
where for every \(\upsilon \in \mathrm{Mor}(T^2M, T^2N)\), \(\pi^1_{T^*M\otimes TN} \circ \upsilon = \pi_{TN} \circ \upsilon \circ 0_{\pi_{TM}}\) and \(\pi^2_{T^*M\otimes TN} \circ \upsilon = T\pi_{N} \circ \upsilon \circ 0_{T\pi_M}\), with the two zero sections \(0_{\pi_{TM}} \colon TM \to T^2M\) and \(0_{T\pi_M} \colon TM \to T^2M\). 
If \(u \in C^2 (M, N)\), then the second-order tangent map \(T^2 u \colon T^2M \to T^2N\) and the maps \(Tu\colon TM\to TN\) and \(u \colon M \to N\) define a \emph{morphism of double vector bundles}~\citelist{\cite{mackenzie}*{definition 1.2}} such that the following diagrams commute.
\begin{center}
\begin{tikzpicture}[back line/.style={densely dotted},scale=3]
\node (P0) at (0,0,0) {$T^2 M$};
\node (P1) at (0,0,1) {$TM$} ;
\node (P2) at (0,-1,1) {$M$};
\node (P3) at (1,0,0) {$T^2 N$};
\node (P4) at (1,0,1) {$TN$};
\node (P5) at (1,-1,1) {$N$};
\node (P6) at (0,-1,0) {$TM$};
\node (P7) at (1,-1,0) {$TN$};

\draw 
(P0) edge[->] node[above] {\tiny{$T^2u$}} (P3)

(P0) edge[->] node[near end, left] {\tiny{$T\pi_M$}} (P6)
(P0) edge[->] node[left] {\tiny{$\pi_{TM}$}}(P1)

(P3) edge[->] node[right] {\tiny{$T\pi_N$}} (P7)
(P3) edge[->] node[left] {\tiny{$\pi_{TN}$}} (P4)

(P1) edge[->] node[near end, above] {\tiny{$Tu$}} (P4)
(P6) edge[->] node[near start, above] {\tiny{$Tu$}} (P7)

(P6) edge[->] node[at start, left] {\tiny{$\pi_M$}} (P2)
(P1) edge[->] node[left] {\tiny{$\pi_M$}} (P2)

(P4) edge[->] node[near start, right] {\tiny{$\pi_N$}} (P5)
(P7) edge[->] node[right] {\tiny{$\pi_N$}} (P5)

(P2) edge[->] node[above] {\tiny{$u$}} (P5);
\end{tikzpicture}
\begin{tikzpicture}[back line/.style={densely dotted},scale=3]
\node (P0) at (0,0,0) {$T^2 M$};
\node (P1) at (0,0,1) {$T^2 M$} ;
\node (P2) at (0,-1,1) {$TM$};
\node (P3) at (1,0,0) {$T^2 N$};
\node (P4) at (1,0,1) {$T^2 N$};
\node (P5) at (1,-1,1) {$TN$};
\node (P6) at (0,-1,0) {$TM$};
\node (P7) at (1,-1,0) {$TN$};
\draw 
(P0) edge[->] node[above] {\tiny{$T^2 u$}} (P3)

(P0) edge[->] node[near end, left] {\tiny{$T\pi_M$}} (P6)
(P0) edge[->,bend right] node[left] {\tiny{$\kappa_M$}}(P1)

(P1) edge[->,bend right] node[left] {\tiny{$\kappa_M$}}(P0)
(P1) edge[->] node[near end, above] {\tiny{$T^2 u$}} (P4)
(P1) edge[->] node[left] {\tiny{$\pi_{TM}$}} (P2)

(P2) edge[->] node[above] {\tiny{$Tu$}} (P5)

(P3) edge[->] node[right] {\tiny{$T\pi_N$}} (P7)
(P3) edge[->,bend right] node[left] {\tiny{$\kappa_N$}} (P4)
(P4) edge[->,bend right] node[left] {\tiny{$\kappa_N$}} (P3)
(P4) edge[->] node[near start, right] {\tiny{$\pi_{TN}$}} (P5)

(P6) edge[->] node[near start, above] {\tiny{$Tu$}} (P7)

(P6) edge[double equal sign distance] (P2) 
(P7) edge[double equal sign distance] (P5);
\end{tikzpicture}
\end{center}
Moreover, the map \(T^2u\) can be viewed as a section from  \(M\) to \(\Mor(T^2M, u^*(T^2M))\), that is, for every \(x\in M\), \(T^2u(x) \in \Mor(T^2M,T^2N)_{x,u(x)}\). 
However, twice colocally weakly differentiable maps are just measurable and so we say that a measurable map \(v\colon M \to \Mor(T^2M,T^2N)\) covers \(u\colon M \to N\) whenever for almost every \(x \in M\), 
\[
\pi_{M\times N} \circ \pi^1_{T^*M\times TN} \circ v(x) = \pi_{M\times N} \circ \pi^2_{T^*M\times TN} \circ v(x) = \bigl(x,u(x)\bigr).
\]

If \((E,\pi_M,M)\) is a vector bundle, the \emph{vertical bundle} \(VE \to E\) is the subbundle of \(TE \to E\) defined by~\cite{wendl}*{definition 3.1}
\[
VE = \bigl\{ \nu \in TE \colon T\pi_M(\nu) = 0\bigr\} = \operatorname{ker}  T\pi_M,
\]
and for every \(e\in E\), the \emph{vertical lift} \(\operatorname{Vert}_e \colon E \to TE\) defined 
for \(\nu \in \pi^{-1}_M(\{\pi_M(e)\})\) by 
\[
\operatorname{Vert}_e (\nu) = \frac{d}{dt}  (e+t \nu) \Big\vert_{t = 0}
\]
gives a natural isomorphism between \(E_{\pi_M(e)}\) and \(V_e E\). 
This construction is independent of any connection. 

\begin{propo}\label{propoTTU}
If the map \(u\colon M \to N\) is twice colocally weakly differentiable, then there exists a unique measurable map \(T^2 u\colon M \to \Mor(T^2 M, T^2N)\) that covers the map \(u\) such that for every function \(f\in C^1 (T^*M\otimes TN,\R)\) such that \(f \circ T u \in W^{1, 1}_{\mathrm{loc}} (M)\),
\[
T(f \circ T u) = T f \circ T^2 u
\]
almost everywhere in \(M\). 
Moreover, \(\kappa_N \circ \, T^2u = T^2u \circ \, \kappa_M\) and \(T^2 u \circ \operatorname{Vert}_{TM} = \operatorname{Vert}_{TN} \circ\, Tu\) almost everywhere in \(M\).
\end{propo}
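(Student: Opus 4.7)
The overall strategy is to reduce everything to the first-order existence and uniqueness of the colocal weak derivative from \cite{cvs} applied to the map \(Tu \colon M \to T^*M \otimes TN\), which is colocally weakly differentiable by Definition~\ref{defTwice}. That first-order theory produces a unique measurable bundle morphism \(\widetilde{T^2 u} \colon TM \to T(T^*M \otimes TN)\) covering \(Tu\) and satisfying \(T(f\circ Tu) = Tf \circ \widetilde{T^2 u}\) almost everywhere for every \(f \in C^1_c(T^*M \otimes TN, \R)\).

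The next step is to recast \(\widetilde{T^2 u}\) as a section \(T^2 u \colon M \to \Mor(T^2 M, T^2 N)\). For each \(\varphi \in T^*_x M \otimes T_y N\), the tangent space \(T_\varphi(T^*M \otimes TN)\) projects via \(T\pi_{M\times N}\) onto \(T_x M \oplus T_y N\) and its kernel is canonically identified by the vertical lift with \(T^*_x M \otimes T_y N\). Since \(\widetilde{T^2 u}(x) \colon T_x M \to T_{Tu(x)}(T^*M\otimes TN)\) is linear and covers \(Tu\), it encodes \(Tu(x)\) together with a bilinear map \(A(x) \colon T_x M \times T_x M \to T_{u(x)} N\); in local trivialisations of \(T^2 M\) and \(T^2 N\) these data assemble into a double vector bundle morphism in \(\Mor(T^2 M, T^2 N)_{x, u(x)}\) with both base maps equal to \(Tu(x)\). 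Uniqueness of \(T^2 u\) then follows from uniqueness of \(\widetilde{T^2 u}\).

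To upgrade the chain rule from \(C^1_c\) to \(C^1\) functions \(f\) satisfying \(f\circ Tu \in W^{1,1}_{\mathrm{loc}}(M)\), I would truncate \(f\) by an exhausting sequence of smooth compactly supported cutoffs \(\chi_\ell\) on \(T^*M \otimes TN\), apply the already-established chain rule to \(\chi_\ell f\), and pass to the limit using the \(W^{1,1}_{\mathrm{loc}}\) hypothesis on \(f \circ Tu\) together with a dominated convergence argument in each local chart of \(M\).

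Finally, the two naturality identities would follow by testing the chain rule against suitable \(f\). The identity \(T^2 u \circ \operatorname{Vert}_{TM} = \operatorname{Vert}_{TN} \circ Tu\) is read off by taking \(f\) fibre-linear along \(T^*M \otimes TN \to M \times N\), so that \(Tf\) sees only the vertical component of \(\widetilde{T^2 u}\), which has already been identified with \(Tu\) in the construction. The canonical flip identity \(\kappa_N \circ T^2 u = T^2 u \circ \kappa_M\) amounts to symmetry of the bilinear map \(A\); I would derive it by testing against functions of the form \(f = Tg\) for \(g \in C^2_c(N, \R)\), writing \(T^2(g\circ u)\) in two ways, and invoking the symmetry of mixed second-order weak partial derivatives of scalar functions on \(M\). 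This last step is the most delicate point: the flip identity is not formal from the double vector bundle morphism structure alone and has to be extracted from the scalar symmetry of weak second derivatives.
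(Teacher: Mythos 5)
Your first three steps track the paper: the paper also obtains the bundle morphism \(TM \to T(T^*M\otimes TN)\) directly from the first-order existence and uniqueness result of \cite{cvs}*{proposition 1.5} applied to \(Tu\) (that citation already covers every \(f\in C^1\) with \(f\circ Tu\in W^{1,1}_{\mathrm{loc}}(M)\), so your truncation step is redundant but harmless), and it likewise regards this morphism as a measurable map into \(T^*M\otimes T(T^*M\otimes TN)\subseteq \Mor(T^2M,T^2N)\). The genuine gap is in your derivation of the two naturality identities, and above all of the flip identity. You propose to test the chain rule against fibre-linear functions \(f\), in particular \(f\) built from \(Tg\) with \(g\in C^2_c(N,\R)\), and then to ``write \(T^2(g\circ u)\) in two ways'' and invoke symmetry of weak second derivatives of \(g\circ u\). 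But such \(f\) are unbounded along the fibres of \(T^*M\otimes TN\), so the hypothesis \(f\circ Tu\in W^{1,1}_{\mathrm{loc}}(M)\) is not available: \(f\circ Tu\) is (a directional component of) \(T(g\circ u)\), and its weak differentiability, i.e.\ \(g\circ u\in W^{2,1}_{\mathrm{loc}}(M)\), is precisely the second-order chain rule property that \emph{fails} for general twice colocally weakly differentiable maps (example~\ref{exampleNoChainRule}; proposition~\ref{equivalentChainRule} shows it requires the extra integrability condition \(\int \double{T^2u}<+\infty\)). So the object \(T^2(g\circ u)\) you want to compute in two ways need not exist, and the scalar symmetry you want to invoke is not at your disposal. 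The same admissibility problem affects your proposed proof of \(T^2u\circ\operatorname{Vert}_{TM}=\operatorname{Vert}_{TN}\circ Tu\) via fibre-linear test functions (and, incidentally, the vertical component of the derivative of \(Tu\) is the genuinely second-order part, not \(Tu\) itself, so the mechanism you describe is also misstated), although in your chart-wise construction that identity is essentially built in.

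The paper's way around this is the point you are missing: one must truncate \emph{in the total space} \(T^*M\otimes TN\). Using the extended local charts \(\varphi,\varphi^*\) of lemma~\ref{lemmeExtended} and a cutoff \(\eta\in C^1_c(T^*M\otimes TN,\R)\) equal to \(1\) on a bundle open set \(V\), the map \(h=\eta\,T\varphi\) is an admissible (compactly supported) test map, so \(T(h\circ Tu)=Th\circ T^2u\) holds; on the sets where \(Tu\) takes values in \(V\) this yields \(T^2(\varphi\circ u)=T^2\varphi\circ T^2u\) and \(T^2u=T^2\varphi^*\circ T^2(\varphi\circ u)\), reducing both the flip and the Vert identities to the Euclidean-target case of lemma~\ref{lemmeTTU} (where symmetry of weak second derivatives is legitimately available) together with the naturality of \(\kappa\) and \(\operatorname{Vert}\) under the \(C^2\) maps \(\varphi\) and \(\varphi^*\); a countable covering argument then gives the identities almost everywhere in \(M\). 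Without this bundle-level localization, your argument proves the key identities only for maps that already satisfy the second-order chain rule, which is a strictly smaller class.
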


In other words, the map \(T^2 u\) is the colocal weak derivative of the bundle morphism \(Tu\). 
Moreover, for almost every \(x\in M\), the double vector bundle morphism \(T^2u(x) \in\) \(\Mor(T^2M, T^2N)_{x,u(x)}\) satisfies \(\pi^1_{T^*M \otimes TN} \circ T^2u(x) = \pi^2_{T^*M\otimes TN} \circ T^2u(x) = Tu(x)\) and 
\(T^2 u(x) \circ \operatorname{Vert}_{TM} = \operatorname{Vert}_{TN} \circ\,Tu(x)\). 

The first lemma recalls how these properties are obtained when the target manifold is a Euclidean space. 
We remind the reader that a locally integrable map \(u\colon M \to \R\) is \emph{twice weakly differentiable} if for every \(x\in M\), there exists a local chart \(\psi \colon V \subseteq M \to \R^m\) such that \(x\in V\) and the map \(u \circ \psi^{-1}\) belongs to \(W^{2,1}_\mathrm{loc}(\psi(V))\).

\begin{lemme}\label{lemmeTTU}
If the locally integrable map \(v\colon M \to \R^n\) is twice weakly differentiable, then there exists a unique measurable map \(T^2 v\colon M \to \Mor(T^2 M, T^2\R^n)\) that covers the map \(v\) such that 
for every local chart \(\psi \colon V \subseteq M \to \R^m\), 
\[
T^2(v \circ \psi^{-1}) = T^2 v \circ T^2 (\psi^{-1})
\]
almost everywhere in \(\psi(V)\). 
Moreover, \(T\pi_{\R^n} \circ T^2v = Tv \circ T\pi_M\), \(\kappa_{\R^n} \circ T^2v = T^2v \circ \kappa_M\) and \(T^2 v \circ \operatorname{Vert}_{TM} = \operatorname{Vert}_{T\R^n} \circ\, Tv\) almost everywhere in \(M\). 
\end{lemme}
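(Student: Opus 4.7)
The plan is to construct $T^2v$ locally in coordinate charts from the Euclidean weak second derivatives of $v$ and then patch the local pieces together. Fix first a local chart $\psi : V \subseteq M \to \R^m$ and write $w := v \circ \psi^{-1}$; by the definition of twice weak differentiability, $w \in W^{2,1}_{\mathrm{loc}}(\psi(V), \R^n)$, so at almost every $z \in \psi(V)$ both the weak first derivative $Dw(z) \in \mathcal{L}(\R^m, \R^n)$ and the weak second derivative $D^2 w(z)$, which is symmetric by Schwarz, are well defined. Using the identification $T^2\R^m \simeq \R^m \times \R^m \times \R^m \times \R^m$, I define $T^2 w(z) : T^2_z \R^m \to T^2_{w(z)} \R^n$ by the standard Euclidean formula
\[
(z,\xi,\eta,\zeta) \longmapsto \bigl(w(z),\, Dw(z)\,\xi,\, Dw(z)\,\eta,\, D^2 w(z)[\xi,\eta] + Dw(z)\,\zeta\bigr),
\]
and then set $T^2 v := T^2 w \circ T^2\psi$ on $T^2 V$, which is well defined almost everywhere since $T^2\psi : T^2 V \to T^2(\psi(V))$ is a $C^1$ diffeomorphism.

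The second step is independence from the chart. Given two overlapping charts $\psi_1, \psi_2$, the transition $\phi := \psi_2 \circ \psi_1^{-1}$ is a $C^2$ diffeomorphism between open subsets of $\R^m$ and $v \circ \psi_1^{-1} = (v \circ \psi_2^{-1}) \circ \phi$. The chain rule for weak second derivatives under smooth change of variable, obtained by mollifying $v \circ \psi_2^{-1}$ and passing to the limit, yields the identity $T^2(v \circ \psi_1^{-1}) = T^2(v \circ \psi_2^{-1}) \circ T^2\phi$ almost everywhere. Composing on the right with $T^2\psi_1$ and using $T^2\psi_2 = T^2\phi \circ T^2\psi_1$, the local definitions coincide on the overlap. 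The local pieces therefore patch to a globally defined measurable map $T^2 v : T^2 M \to T^2 \R^n$ covering $v$. Uniqueness is immediate: any other such map must coincide with $T^2 v$ in every chart, hence almost everywhere.

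Finally I would verify the structural identities by direct inspection in a chart. In the identification above, $\pi_{T\R^m}$ projects onto $(z,\xi)$, $T\pi_{\R^m}$ projects onto $(z,\eta)$, the canonical flip $\kappa_{\R^m}$ swaps $\xi$ and $\eta$, and $\operatorname{Vert}_{T\R^m}$ sends $((z,\xi),\nu)$ to $(z,\xi,0,\nu)$. Plugging these into the Euclidean formula for $T^2 w$ and using the symmetry of $D^2 w$ gives $T\pi_{\R^n} \circ T^2 w = Tw \circ T\pi_{\R^m}$, $\kappa_{\R^n} \circ T^2 w = T^2 w \circ \kappa_{\R^m}$, and $T^2 w \circ \operatorname{Vert}_{T\R^m} = \operatorname{Vert}_{T\R^n} \circ Tw$ pointwise almost everywhere. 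Naturality of $\pi_{TM}$, $T\pi_M$, $\kappa_M$ and $\operatorname{Vert}_{TM}$ under the smooth diffeomorphism $T^2\psi$ then transfers these identities from $\psi(V)$ to $M$.

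The main obstacle I expect is the change-of-chart compatibility, which hinges on the $W^{2,1}$ chain rule under smooth diffeomorphisms; I would handle it by approximating $v \circ \psi^{-1}$ by smooth maps in $W^{2,1}_{\mathrm{loc}}$, applying the classical second-order chain rule, and passing to the limit using the uniform bounds afforded by $\phi$ being $C^2$ on compact subsets. Once this is established, all remaining verifications reduce to algebraic identities in Euclidean coordinates, valid pointwise wherever $Dw$ and $D^2 w$ admit their pointwise representatives.
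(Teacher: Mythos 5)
Your proposal is correct and follows essentially the same route as the paper's own (sketched) proof: define $T^2 v$ in the flat case by the explicit Euclidean formula, transfer it via $T^2\psi$ using $T^2 v = T^2(v\circ\psi^{-1})\circ T^2\psi$, verify the flip, projection and vertical-lift identities in coordinates (the flip identity resting on the symmetry of the weak second derivative), and conclude by a countable covering argument. The only addition is that you spell out the chart-compatibility via the $W^{2,1}_{\mathrm{loc}}$ chain rule under $C^2$ transition maps, which the paper leaves implicit in its covering argument.
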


\begin{proof}[Sketch of the proof]
First, if \(M\) is an open set of \(\R^m\), since the domain is flat, for almost every \(x\in M\), \(T^2v(x) \colon T^2_x M \to T_{v(x)}^2\R^n\) is defined for every \((e,\mu,\nu) \in T^2_x M\) by
\[
T^2v(x)[e,\mu,\nu] = (Dv(x)[e], Dv(x)[\mu], D^2v(x)[e,\mu] + Dv(x)[\nu]), 
\]
and the conclusion follows. 

Next, for a general manifold \(M\), for every local chart \(\psi \colon V \subseteq M \to \R^m\), the map \(v\circ \psi^{-1} \colon \psi(V) \subseteq \R^m \to \R^n\) is twice weakly differentiable. 
Since \(\psi \in C^2(V,\R^m)\), we have \(T^2\psi \in \Mor(T^2 V, \psi^*(T^2\R^m))\), \(\kappa_{\R^m} \circ T^2\psi = T^2 \psi \circ \kappa_M\) and \(T^2 \psi \circ \operatorname{Vert}_{TM} = \operatorname{Vert}_{T\R^m} \circ \, T\psi\)~\cite{michor}*{\S 8.13}. 
As a consequence, since almost everywhere in \(V\)
\[
T^2 v = T^2(v \circ \psi^{-1}) \circ T^2\psi, 
\]
by the previous step, we can deduce that \(T\pi_{\R^n} \circ T^2 v = T v \circ T\pi_M\), \(\kappa_{\R^n} \circ T^2v = T^2v \circ \kappa_M\) and \(T^2 v \circ \operatorname{Vert}_{TM} = \operatorname{Vert}_{T\R^n} \circ\, Tv\) almost everywhere in \(V\). 
Since \(\pi_{TM} = T\pi_M \circ \kappa_M\) on \(T^2M\) and \(\pi_{T\R^n} = T\pi_{\R^n} \circ \kappa_{\R^n}\) on \(T^2\R^n\)~\cite{michor}*{\S 8.13}, then \(\pi_{T\R^n} \circ T^2v = Tv \circ \pi_{TM}\) almost everywhere in \(V\) 
and so for almost every \(x\in V\), \(T^2v(x) \in \Mor(T^2M, T^2N)_{x,v(x)}\). 
Finally, since the manifold \(M\) has a countable atlas of local charts, by a direct covering argument, the previous equalities are satisfied almost everywhere in \(M\). 
\end{proof}

The following lemma~\ref{lemmeExtended} allows to embed a local chart into a compactly supported function; its proof and its statement is similar to the one for \(C^1\)--manifolds~\cite{cvs}*{lemma 1.6}.
\begin{lemme}[Extended local charts]\label{lemmeExtended}
For every \(y \in N\), there exist an open subset \(U\subseteq N\) such that \(y\in U\), and maps \(\varphi \in C^2(N,\R^n)\) and \(\varphi^* \in C^2(\R^n,N)\) such that 
\begin{enumerate}[(i)]
  \item the set \(\supp \varphi\) is compact,
  \item the set \(\overline{\{ x \in \R^n \colon \varphi^*(x) \neq y\}}\) is compact,
  \item the map \(\varphi_{\arrowvert U}\) is a diffeomorphism of class \(C^2\) onto its image \(\varphi(U)\),
  \item \(\varphi^* \circ \varphi= \id\) in \(U\).
\end{enumerate}
\end{lemme}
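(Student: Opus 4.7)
The plan is a standard cutoff construction localizing a chart around $y$ and gluing on both sides.

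First I would select any local chart $\psi \colon W \subseteq N \to \R^n$ with $y \in W$, translate it so that $\psi(y) = 0$, and shrink $W$ so that $\overline{B(0, 2r)} \subseteq \psi(W)$ for some $r > 0$. Then I would fix a cutoff $\chi \in C^2_c(\R^n, [0,1])$ with $\chi \equiv 1$ on $\overline{B(0, r)}$ and $\supp \chi \subseteq B(0, 2r)$, and set $U := \psi^{-1}(B(0, r))$.

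Next I would define
\[
\varphi(z) := \begin{cases} \chi(\psi(z))\, \psi(z) & \text{if } z \in W, \\ 0 & \text{if } z \in N \setminus W, \end{cases}
\qquad \text{and} \qquad
\varphi^*(x) := \psi^{-1}\bigl(\chi(x)\, x\bigr) \quad \text{for } x \in \R^n.
\]
The map $\varphi^*$ is well-defined globally because on $\supp \chi$ one has $|\chi(x)\, x| \le |x| < 2r$ while $\chi(x)\, x = 0$ off $\supp \chi$, so in every case $\chi(x)\, x \in B(0, 2r) \subseteq \psi(W)$. Both maps are of class $C^2$: for $\varphi$ the two definitions agree on the open set $W \setminus \psi^{-1}(\supp \chi)$, where both vanish; for $\varphi^*$ the composition with $\psi^{-1}$ on the open set $\psi(W)$ is $C^2$ in its argument.

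Finally I would verify the four properties directly. For (i), $\supp \varphi \subseteq \psi^{-1}(\supp \chi)$, which is the continuous image of a compact set under the homeomorphism $\psi^{-1}$, hence compact. For (ii), $\{x \in \R^n \colon \varphi^*(x) \neq y\} \subseteq \supp \chi$, so its closure lies in $\overline{B(0, 2r)}$ and is compact. For (iii), on $U$ we have $\chi \circ \psi \equiv 1$, so $\varphi_{|U} = \psi_{|U}$ is a $C^2$-diffeomorphism onto $\varphi(U) = B(0, r)$. For (iv), given $z \in U$, $\varphi(z) = \psi(z) \in B(0, r)$, hence $\chi(\varphi(z)) = 1$ and $\varphi^*(\varphi(z)) = \psi^{-1}(\psi(z)) = z$.

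I do not expect any serious obstacle: the only point requiring mild care is choosing nested radii $r < 2r$ with $\overline{B(0,2r)} \subseteq \psi(W)$, which guarantees that $\chi(x)\, x$ lies in the domain of $\psi^{-1}$ for every $x \in \R^n$ and makes $\varphi^*$ globally smooth. Everything else is bookkeeping with the cutoff.
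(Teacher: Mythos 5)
Your construction is exactly the paper's: a chart $\psi$ centered at $y$ with $\psi(y)=0$, a $C^2$ cutoff supported in $B_{2r}\subseteq\psi(V)$ equal to $1$ on $B_r$, $\varphi=(\chi\circ\psi)\,\psi$ extended by zero, and $\varphi^*(x)=\psi^{-1}(\chi(x)x)$, with $U=\psi^{-1}(B_r)$. The proposal is correct and follows essentially the same approach, only spelling out the verifications (well-definedness of $\varphi^*$, compact supports, and properties (i)--(iv)) that the paper leaves implicit.
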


\begin{proof}
By definition of manifold of class \(C^2\), there exists a local chart \(\psi \colon V \subseteq N \to \psi(V) \subseteq \R^n\) around \(y \in N\) such that \(\psi \colon V \to \psi(V)\) is a diffeomorphism of class \(C^2\). 
Without loss of generality, we assume that \(\psi(y) = 0\). 
Since the set \(\psi (V) \subseteq \R^n\) is open, there exists \(r>0\) such that \(B_{2 r} \subseteq \psi(V)\). We choose a function \(\theta \in C^2_c(\R^n,\R)\) such that \(0\le \theta \le 1\) on \(\R^n\), \(\theta = 1\) on \(B_r\) and \(\supp \theta \subset B_{2 r}\). 
We take the set \(U = \psi^{-1}(B_r)\) and the maps \(\varphi \colon N \to \R^n\) defined for every \(z \in N\) by
\begin{equation*}
\varphi(z)=
\begin{cases}
\theta\bigl(\psi(z)\bigr)\, \psi(z) & \text{ if } z \in \psi^{-1}(B_{2 r}), \\
0 & \text{ otherwise }\\
\end{cases}
\end{equation*}
and \(\varphi^*\colon \R^n \to N\) defined for every \(x\in \R^n\) by \(\varphi^*(x) = \psi^{-1}(\theta(x) \,x).\)
\end{proof}

\begin{proof}[Proof of proposition~\ref{propoTTU}]
By the existence and uniqueness of the colocal weak derivative~\cite{cvs}*{proposition 1.5}, there exists a unique measurable bundle morphism \(T^2 u \colon TM \to T(T^*M \otimes TN)\) such that for every \(f\in C^1 (T^*M\otimes TN,\R)\) such that \(f \circ\,Tu \in W^{1, 1}_{\mathrm{loc}} (M)\), we have \(T(f\circ\,Tu) = Tf \circ T^2 u\) almost everywhere in \(M\). 
As for the colocal weak derivative, the map \(T^2u\) can be viewed as a measurable map from \(M\) to \(T^*M \otimes T(T^*M\otimes TN)\), which is a subset of \(\Mor(T^2M,T^2N)\). 

Let \(U\subseteq N\), \(\varphi \in C^2(N,\R^n)\) and \(\varphi^* \in C^2(N,\R^n)\) be the extended local charts given by lemma~\ref{lemmeExtended}. 
For every \(y \in T^*M\otimes TU\), there exist two open sets \(V,W \subseteq T^*M\otimes TU\) such that \(y \in \overline{V} \subseteq W\) and then a map \(\eta \in C^1_c(T^*M\otimes TN,\R)\) such that \(0 \le \eta \le 1\) on \(T^*M\otimes TN\), \(\eta = 1\) on \(V\) and \(\supp(\eta) \subset W\).
As a consequence, the function \(h\) defined for every \(z\in T^*M\otimes TN\) by 
\begin{equation*}
h(z)=
\begin{cases}
\eta(z) T\varphi(z) & \text{ if } z \in W, \\
0 & \text{ otherwise, }\\
\end{cases}
\end{equation*}
belongs to \(C^1_c(T^*M\otimes TN, \R^n)\) and satisfies \(T(h \circ\,Tu) = Th \circ T^2 u\) almost everywhere in \(M\). 

Since \(\eta =1\) on \(V\), we have in particular \(T^2(\varphi \circ u) = T^2\varphi \circ T^2u\) almost everywhere in \(u^{-1}(\pi_N(\pi_{M\times N}(V)))\).  
Since \(T^2\varphi^* \circ T^2\varphi = \mathrm{id}\) on \((\pi^2_N)^{-1}(U)\), by lemma~\ref{lemmeTTU} and since \(\varphi^*\in C^2(\R^n,N)\)~\cite{michor}*{\S 8.13}, almost everywhere in \(u^{-1}(\pi_N(\pi_{M\times N}(V)))\) 
\begin{align*}
\kappa_N \circ T^2 u 
& = \kappa_N \circ T^2 \varphi^* \circ T^2(\varphi \circ u) \\
& = T^2\varphi^* \circ \kappa_{\R^n} \circ T^2(\varphi \circ u) \\
& = T^2\varphi^* \circ T^2(\varphi \circ u) \circ \kappa_M = T^2 u \circ \kappa_M. 
\end{align*}
In particular, we have 
\[
 T^2 u = \kappa_N \circ T^2u \circ \kappa_M,
\]
which gives a second bundle morphism structure to \(T^2u\) so that \(T^2 u\) is a double vector bundle morphism in \(u^{-1}(\pi_N(\pi_{M\times N}(V)))\).

The facts that \(T^2 u \circ \operatorname{Vert}_{TM} = \operatorname{Vert}_{TN} \circ\,Tu\) and \(T\pi_N \circ T^2 u = Tu \circ T\pi_M\) almost everywhere in \(u^{-1}(\pi_N(\pi_{M\times N}(V)))\) can be proved along the same line. 
Since \(\pi_{TM} = T\pi_M \circ \kappa_M\) on \(T^2M\) and \(\pi_{TN} = T\pi_N \circ \kappa_N\) on \(T^2N\)~\cite{michor}*{\S 8.13}, 
\begin{align*}
Tu \circ \pi_{TM} 
= T u \circ T\pi_{M} \circ \kappa_M = T\pi_N \circ T^2 u\circ \kappa_M 
= T\pi_N \circ \kappa_N \circ T^2 u = \pi_{TN} \circ T^2 u
\end{align*}
almost everywhere in \(u^{-1}(\pi_N(\pi_{M\times N}(V)))\). 
So for almost every \(x\in u^{-1}(\pi_N(\pi_{M\times N}(V)))\), 
\[
\pi^1_{T^*M \otimes TN} \circ T^2 u(x) = \pi^2_{T^*M\otimes TN} \circ T^2 u(x) = Tu(x)
\]
and \(\pi_{M \times N} \circ\,Tu(x) = (x,u(x))\). 

Finally, by a direct covering argument, since the manifolds \(M\) and \(N\) have a countable basis, the measurable map \(T^2u \colon M \to \Mor(T^2M,T^2N)\) satisfies the previous properties almost everywhere in \(M\). 
\end{proof}

\subsection{Sequences of twice colocally weakly differentiable maps}
In order to study the lower semi-continuity of functionals, it is interesting to have some sufficient conditions in the calculus for a limit of twice colocally weakly differentiable maps to be also twice colocally weakly differentiable. 
In this part, we deduce from the closure property for first-order derivatives~\cite{cvs}*{proposition 3.7} a closure property for second-order derivatives. 

We first recall a notion of convergence in measure~\cite{cvs}*{definition 3.5}. 
\begin{de}\label{defConvergeMeasure}
A sequence \((u_\ell)_{\ell\in N}\) of maps from \(M\) to \(N\) \emph{converges locally in measure} to a map \(u\colon M \to N\) whenever for every \(x\in M\) there exists a local chart \(\psi \colon V \subseteq M \to \R^m\) such that \(x\in V\) and for every open set \(U \subseteq N\), 
\[
\lim_{\ell \to \infty} \mathcal{L}^m\bigl(\psi((u^{-1}(U) \cap V) \setminus u^{-1}_\ell(U))\bigr) = 0.
\]
\end{de}
The definition applies directly to a sequence \((\upsilon_\ell)_{\ell \in \N}\) of bundle morphisms between \(TM\) and \(TN\) viewed as maps from \(M\) to \(T^*M \otimes TN\). 

We then generalize the notion of uniform integrability~\cite{cvs}*{definition 3.6} to any bundle morphisms. 

\begin{de}\label{defUniformIntegrable}
Let \((F,\pi_N,N)\) be a vector bundle. 
A sequence \((\upsilon_\ell)_{\ell \in \N}\) of bundle morphisms from \(TM\) to \(F\) that covers a sequence \((u_\ell)_{\ell \in \N}\) of maps from \(M\) to \(N\) is \emph{bilocally uniformly integrable} whenever for every \(x\in M\) and every \(y\in N\), there exist a local chart \(\psi \colon V \subseteq M \to \R^m\) and a bundle chart \(\varphi \colon \pi_N^{-1}(U) \subseteq F \to \R^q\) (\(\varphi \colon \pi_N^{-1}(U) \subseteq F \to \varphi(\pi_N^{-1}(U)) \subseteq \R^q\) is a diffeomorphism and \(U\subseteq N\) is open) such that for every \(\varepsilon >0\) there exists \(\delta >0\) such that if \(W \subseteq \psi(V)\), \(\mathcal{L}^m(W)\le \delta\) and \(\ell \in \N\), 
\[
\int_{W \cap \psi(u_\ell^{-1}(U))} \abs{\varphi \circ \upsilon_\ell \circ T(\psi^{-1})}\le \varepsilon. 
\]
\end{de}
This definition applies directly to a sequence \((\upsilon_\ell)_{\ell \in \N}\) from \(TM\) to \(TN\) or from \(TM\) to \(T(T^*M \otimes TN)\). 

\begin{propo}\label{closurePropertyTwice}
Let \((u_\ell)_{\ell \in \N}\) be a sequence of twice colocally weakly differentiable maps from \(M\) to \(N\). 
If the sequence \((Tu_\ell)_{\ell \in \N}\) converges locally in measure to a measurable map \(\upsilon \colon M \to T^*M \otimes TN\) that covers a map \(u\colon M \to N\), if the sequences \((Tu_\ell)_{\ell \in \N} \) and \((T^2 u_\ell)_{\ell \in \N}\) are bilocally uniformly integrable, then the map \(u\) is twice colocally weakly differentiable. 
\end{propo}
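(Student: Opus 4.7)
The plan is to apply the first-order closure property \cite{cvs}*{proposition 3.7} twice: once to the sequence \((u_\ell)_{\ell \in \N}\) to obtain that \(u\) is colocally weakly differentiable with \(Tu = \upsilon\), and then to the sequence \((Tu_\ell)_{\ell\in \N}\) viewed as measurable maps from \(M\) into the \(C^1\)-manifold \(T^*M\otimes TN\), whose colocal weak derivatives are the maps \(T^2u_\ell\) provided by proposition~\ref{propoTTU}. The resulting colocal weak differentiability of \(Tu\) is then exactly the second-order condition of definition~\ref{defTwice}.

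First I would derive that \((u_\ell)_{\ell\in \N}\) converges locally in measure to \(u\). Indeed, the composition \(p := \mathrm{pr}_N \circ \pi_{M\times N} \colon T^*M\otimes TN \to N\) is continuous, covers the identities in the sense that \(p \circ Tu_\ell = u_\ell\) and \(p \circ \upsilon = u\) on \(M\), and for every open \(U \subseteq N\) the set \(p^{-1}(U) \subseteq T^*M\otimes TN\) is open. Testing definition~\ref{defConvergeMeasure} against \(p^{-1}(U)\) transforms the convergence in measure of \((Tu_\ell)_{\ell\in \N}\) toward \(\upsilon\) into the convergence in measure of \((u_\ell)_{\ell\in \N}\) toward \(u\). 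Together with the assumed bilocal uniform integrability of \((Tu_\ell)_{\ell \in \N}\), the first-order closure property then yields that \(u\) is colocally weakly differentiable with \(Tu = \upsilon\) almost everywhere in \(M\).

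For the second application, I view the bundle morphisms \(Tu_\ell\) as measurable maps from \(M\) to the differentiable manifold \(T^*M\otimes TN\). By proposition~\ref{propoTTU}, their colocal weak derivatives, viewed as bundle morphisms \(TM \to T(T^*M \otimes TN)\), coincide with \(T^2u_\ell\). The sequence \((Tu_\ell)_{\ell \in \N}\) converges locally in measure to \(\upsilon = Tu\) by assumption, and \((T^2u_\ell)_{\ell \in \N}\) is bilocally uniformly integrable in the sense of definition~\ref{defUniformIntegrable}. A second application of \cite{cvs}*{proposition 3.7} therefore gives that \(Tu\) is colocally weakly differentiable, which by definition~\ref{defTwice} is precisely the twice colocal weak differentiability of \(u\).

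The main technical point will be bookkeeping rather than substance: one must verify that the notions of convergence locally in measure and bilocal uniform integrability that appear in the first-order closure of \cite{cvs} admit the natural extension in which \(M\) is replaced by itself and the target \(N\) is replaced by the \(C^1\)-manifold \(T^*M\otimes TN\), and that this matches definitions~\ref{defConvergeMeasure} and~\ref{defUniformIntegrable} as explicitly extended in the comments following them. Once these identifications are made, no further argument is required, since the bundle-morphism structure of \(Tu\) over \(M\times N\) is automatically inherited in the limit through the uniqueness of the colocal weak derivative.
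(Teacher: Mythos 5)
Your proposal is correct and follows essentially the same route as the paper, which disposes of the statement in one line by applying the first-order closure property of \cite{cvs}*{proposition 3.7} first to \((u_\ell)_{\ell\in\N}\) and then to \((Tu_\ell)_{\ell\in\N}\). Your additional remark that the local convergence in measure of \((u_\ell)_{\ell\in\N}\) to \(u\) follows from that of \((Tu_\ell)_{\ell\in\N}\) by testing definition~\ref{defConvergeMeasure} on preimages of open sets under the projection \(T^*M\otimes TN\to N\) is a correct verification of a point the paper leaves implicit.
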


This proposition is a direct consequence of the closure property for colocally weakly differentiable maps~\cite{cvs}*{proposition 3.7}, applied to the sequence \((u_\ell)_{\ell \in \N}\) and then to \((Tu_\ell)_{\ell\in \N}\) . 

\subsection{Higher order colocal weak differentiability}

In this part, we explain how previous notions and propositions extend to higher-order derivatives. 

For \(k \ge 3\), we assume that \(M\) and \(N\) are manifolds of class \(C^k\). 
Since the set \(T^*M \otimes TN\) is a manifold of class \(C^{k-1}\), we also proceed by induction. 
A map \(u\colon M \to N\) is \(1\) times colocally weakly differentiable whenever it is colocally weakly differentiable. 

\begin{de}
Let \(k \in \N_*\). A map \(u \colon M \to N\) is \emph{\(k\) times colocally weakly differentiable} whenever \(u\) is \((k-1)\) times colocally weakly differentiable and \(T u \colon M \to T^*M \otimes TN\) is \((k-1)\) times colocally weakly differentiable. 
\end{de}

As for the notion of twice weak differentiability, there is some natural framework to work with, that is, we denote by \(T^k M\) the \emph{\(k^{\text{th}}\) order tangent bundle} which is by induction \(T^k M := T(T^{k-1} M)\)~\cite{bertram}*{\S 7.1}; 
this manifold has \(k\) natural vector bundle structures \(\pi_{T^{k-1}M}^1,\dots, \pi_{T^{k-1}M}^k\) over \(T^{k-1}M\)~\cite{bertram}*{\S 15.1} and it is a \emph{\(k^\text{tuple}\) vector bundle}~\citelist{\cite{gm}*{definition 3.1}\cite{gr}*{definition 4.1 and theorem 5.1}}; the canonical flips correspond to any permutation of the bundle structures and 
the space of \emph{\(k^\text{tuple}\) vector bundle morphisms} \(\mathrm{Mor}(T^k M, T^k N)\) is  defined by the condition that for every \((x,y)\in M \times N\), the fiber \(\mathrm{Mor}(T^k M, T^k N)_{x,y}\) consists of maps \(\upsilon \colon T^k_x M \to T^k_y N\) and \(\upsilon_1, \dotsc, \upsilon_k \in \Mor(T^{k-1} M, T^{k-1} N)_{x,y}\) such that for every \(j \in \{1,\dots,k\}\), \(\pi_{T^{k-1}N}^j \circ \upsilon = \upsilon_j \circ \pi_{T^{k-1}M}^j\)~\citelist{\cite{gm}*{definition 3.1}\cite{gr}*{definition 4.1}};
we note that \(\mathrm{Mor}(T^k M, T^kN)\) has also a \(k^\text{tuple}\) vector bundle structure. 

If \(u\colon M \to N\) is a \(k\) times colocally weakly differentiable map, then by existence and uniqueness of the colocal weak derivative~\cite{cvs}*{proposition 1.5}, and by induction, there exist unique measurable maps \(T^2 u,\dots, T^k u\) such that for every \(j \in \{3,\dots,k\}\), \(T^j u \colon TM \to T(\Mor(T^{j-1} M,\) \(T^{j-1} N))\) is the colocal weak derivative of \(T^{j-1}u\); this derivative can also be viewed as a map from \(M\) to \(\Mor(T^j M,T^j N)\) that covers \(u\), that is, for almost every \(x\in M\), \(T^j u(x) \in \Mor(T^j M, T^jN)_{x,u(x)}\). 

\medbreak 

We have a closure property for higher-order colocally weakly differentiable maps similar to proposition~\ref{closurePropertyTwice} for twice colocally weakly differentiable maps.  
Indeed, definition~\ref{defConvergeMeasure} applies directly to a sequence \((\upsilon_\ell)_{\ell \in \N}\) of maps from \(M\) to \(\Mor(T^k M, T^k N)\). 
Moreover, definition~\ref{defUniformIntegrable} applies also to a sequence \((\upsilon_\ell)_{\ell \in \N}\) from \(TM\) to \(T(\Mor(T^{k-1}M, T^{k-1} N))\) that covers a sequence \((u_\ell)_{\ell \in \N}\) from \(M\) to \(\Mor(T^{k-1}M, T^{k-1} N)\). 

\begin{propo}\label{closurePropertyKTimes}
Let \((u_\ell)_{\ell \in \N}\) be a sequence of \(k\) times colocally weakly differentiable maps from \(M\) to \(N\).
If the sequence \((T^{k-1} u_\ell)_{\ell \in \N}\) converges locally in measure to a measurable map \(\upsilon \colon M \to \Mor(T^{k-1} M, T^{k-1} N)\) that covers a map \(u\colon M \to N\) and if for all \(j\in \{1,\dots, k\}\), the sequence \((T^j u_\ell)_{\ell \in \N} \) is bilocally uniformly integrable, then the map \(u\) is \(k\) times colocally weakly differentiable. 
\end{propo}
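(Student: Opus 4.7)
My plan is to argue by induction on $k$, mirroring the strategy already used for proposition~\ref{closurePropertyTwice}, and using the first-order closure property~\cite{cvs}*{proposition 3.7} as the fundamental building block at each iteration. To make the induction actually close, I would strengthen the statement so as to also record, as part of the conclusion, that $T^j u_\ell \to T^j u$ locally in measure for every $j \in \{0, \dots, k-1\}$; this extra piece of information is exactly what feeds the next iteration. The base case $k=1$ is then~\cite{cvs}*{proposition 3.7} applied directly to $(u_\ell)_{\ell \in \N}$, which takes as input the convergence of $(u_\ell)$ locally in measure (obtained from the convergence of $(T^{k-1} u_\ell)$ by composing with the bundle projection onto $M \times N$) together with the bilocal uniform integrability of $(T u_\ell)$.

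For the inductive step from order \(k-1\) to order \(k\), I would first exploit the \(k^\text{tuple}\) vector bundle structure of \(\Mor(T^{k-1}M, T^{k-1}N)\): since the various structure projections onto \(\Mor(T^{j}M, T^{j}N)\) for \(j \le k-2\) are continuous and send \(T^{k-1}u_\ell\) to \(T^j u_\ell\), the local convergence in measure of \((T^{k-1}u_\ell)\) to \(\upsilon\) passes to all lower orders. In particular, the sequence \((T^{k-2}u_\ell)\) converges locally in measure to the projected limit of \(\upsilon\), and the bilocal uniform integrability of \((T^j u_\ell)\) holds for every \(j \in \{1,\dots,k-1\}\). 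The induction hypothesis applied at order \(k-1\) then yields that \(u\) is \((k-1)\) times colocally weakly differentiable and that \(T^j u_\ell \to T^j u\) locally in measure for all \(j \le k-1\); by uniqueness of limits in measure the given map \(\upsilon\) coincides almost everywhere with \(T^{k-1}u\).

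To upgrade this to \(k\) times differentiability, I would apply~\cite{cvs}*{proposition 3.7} one more time, this time to the sequence \((T^{k-1}u_\ell)_{\ell \in \N}\) regarded as a sequence of maps from \(M\) to \(\Mor(T^{k-1}M, T^{k-1}N)\): this sequence converges locally in measure to \(T^{k-1}u\) by the previous paragraph, and its colocal weak derivatives \(T^k u_\ell\) are bilocally uniformly integrable by hypothesis. The conclusion is that \(T^{k-1} u\) is colocally weakly differentiable, which by definition means that \(u\) is \(k\) times colocally weakly differentiable; the same application also provides \(T^k u_\ell \to T^k u\) locally in measure, closing the strengthened induction.

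The only genuinely delicate point is the compatibility at each stage between the limit obtained abstractly from the closure property and the colocal weak derivative of the limit map: one needs to know that the limit in measure of \((T^j u_\ell)\) coincides with \(T^j u\), not merely with some measurable bundle morphism covering \(u\). This identification is handled by uniqueness of the colocal weak derivative~\cite{cvs}*{proposition 1.5} together with uniqueness of limits in measure, and it is what justifies building the identification \(T^j u_\ell \to T^j u\) into the induction hypothesis.
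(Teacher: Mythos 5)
Your overall strategy---iterating the first-order closure property of \cite{cvs}*{proposition 3.7} through the orders, and using the projections of the multiple vector bundle structure to push the convergence of \((T^{k-1}u_\ell)_{\ell\in\N}\) down to the lower-order sequences---is exactly the paper's proof, which applies that proposition recursively to \((u_\ell)_{\ell\in\N},\dots,(T^{k-1}u_\ell)_{\ell\in\N}\). The point you rightly single out as delicate, namely identifying the limit in measure of \((T^{j}u_\ell)_{\ell\in\N}\) with \(T^{j}u\) (in particular \(\upsilon=T^{k-1}u\), without which your last application of the closure property only shows that the abstract limit \(\upsilon\) is colocally weakly differentiable, not that \(u\) is \(k\) times colocally weakly differentiable), is however not handled consistently. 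With the strengthening as you define it, the statement at order \(k-1\) records \(T^{j}u_\ell\to T^{j}u\) only for \(j\le k-2\), so it cannot ``yield \(T^{j}u_\ell\to T^{j}u\) for all \(j\le k-1\)'' as you claim when deducing \(\upsilon=T^{k-1}u\); and the repair you propose at the end---that the final application ``also provides \(T^{k}u_\ell\to T^{k}u\) locally in measure''---is false: no convergence of \((T^{k}u_\ell)_{\ell\in\N}\) is assumed, and bilocal uniform integrability does not force convergence in measure (take \(M=\R\), \(N=\R\), \(u_\ell(x)=\ell^{-k}\sin(\ell x)\): all derivatives of order \(\le k-1\) converge to \(0\), the \(k\)-th derivatives are bounded, hence bilocally uniformly integrable, yet oscillate and converge to nothing in measure). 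No closure statement can have that conclusion, so as written the induction does not close. Appealing to ``uniqueness of the colocal weak derivative together with uniqueness of limits in measure'' is also not enough by itself: uniqueness compares two objects each already known to be a derivative, respectively a limit, whereas here one must first show that \(\upsilon\) satisfies the defining chain-rule identities for the colocal weak derivative of \(T^{k-2}u\).

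The correct source of the identification is the first-order closure property itself (its ``\(Tu=\upsilon\)'' clause, or the limit passage in its proof), applied at the top admissible level: the sequence \((T^{k-2}u_\ell)_{\ell\in\N}\) has colocal weak derivatives \((T^{k-1}u_\ell)_{\ell\in\N}\) which converge locally in measure to \(\upsilon\) (and \(\upsilon\) covers \(T^{k-2}u\) thanks to the lower-order identifications you do obtain from the induction hypothesis) and are bilocally uniformly integrable; this yields both that \(T^{k-2}u\) is colocally weakly differentiable and that \(T^{k-1}u=\upsilon\) almost everywhere. Equivalently, one passes to the limit in \(T(f\circ T^{k-2}u_\ell)=Tf\circ T^{k-1}u_\ell\) for test functions \(f\), using the convergence locally in measure and the uniform integrability (Vitali or weak \(L^1\) compactness). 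With \(\upsilon=T^{k-1}u\) secured in this way, your final application of \cite{cvs}*{proposition 3.7} to \((T^{k-1}u_\ell)_{\ell\in\N}\), with \((T^{k}u_\ell)_{\ell\in\N}\) bilocally uniformly integrable, does give that \(T^{k-1}u\) is colocally weakly differentiable, hence that \(u\) is \(k\) times colocally weakly differentiable; the unneeded claim about the convergence of \((T^{k}u_\ell)_{\ell\in\N}\) should simply be deleted.
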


Propositon~\ref{closurePropertyKTimes} is a direct consequence of the closure property for colocally weakly differentiable maps~\cite{cvs}*{proposition 3.7}, applied recursively to the sequences \((u_\ell)_{\ell \in \N},\) \(\dots, (T^{k-1} u_\ell)_{\ell\in \N}\). 

\section{Colocal weak covariant derivatives and Sobolev spaces}\label{sectionCovariant}

In this section we study how differentiability can be characterized in covariant terms when the manifolds \(M\) and \(N\) have an affine structure.

\subsection{Geometric preliminaries}\label{sectionPreliminaries}
We recall some concepts and tools of affine geometry of a vector bundle \((E,\pi_M,M)\). 
A map \(K_E \colon TE \to E\) is a \emph{connection} if \(K_E\) is a bundle morphism from \((TE, \pi_E, E)\) to \((E, \pi_M, M)\) that covers \(\pi_M\) and a bundle morphism from \((TE, T \pi_M, TM)\) to \((E, \pi_M, M)\) that covers \(\pi^{TM}_{M} \colon TM \to M\), and for every \(e \in E\), \((K_E \circ \Vertlift)(e) = e\)~\cite{wendl}*{definition 3.9}.
If we endow \(E\) with a connection \(K_E\), the \emph{horizontal bundle} \(H_K E \to E\) is the subbundle of \(TE \to E\) defined by~\cite{gk}*{definition 3.4}
\[
  H_K E = \{ \nu \in TE \colon K_E(\nu) = 0\} = \operatorname{ker}\, K_E;
\]
then the direct decomposition \(TE = H_K E \oplus \, VE\) holds~\cite{gk}*{proposition 3.5}. 
Equivalently, there exists a map called the \emph{horizontal lift} \(\operatorname{Hor}_K \colon TM \to H_KE \subset TE\) such that for each \(e \in TM\), \((T\pi_M \circ \operatorname{Hor}_K)(e) = e\). 
Consequently, if \(\mathrm{id}_{TE} \colon TE \to TE\) is the identity map, then
\begin{equation}\label{IdonTE}
\mathrm{id}_{TE} = \operatorname{Vert} \circ \, K_E + \operatorname{Hor}_K \circ \, T\pi_M. 
\end{equation}

If \((M,g_M)\) is a Riemannian manifold and if we endow \(E\) with a metric connection \(K_E\), then the \emph{Sasaki metric} \(G^S_E\)~\cite{sasaki} (see also~\cite{docarmo}*{chapter 3, exercise 2}) is defined for every \(\nu \in TE\) by 
\[
G^S_E(\nu) = g_M (T\pi_M(\nu)) + g_E(K_E(\nu)). 
\]

If \(K_{TM}\) and \(K_{TN}\) are connections on \(TM\) and on \(TN\) respectively, a connection \(K_{T^*M\otimes TN} \colon T(T^*M\otimes TN) \to T^*M \otimes TN\) can be defined for every \(v \in T(T^*M)\) and \(w \in T(TN)\) by 
\[
K_{T^*M \otimes TN}(v \otimes w) = K_{T^*M}(v) \otimes K_{TN}(w),
\]
where \(K_{T^*M} \colon T(T^*M) \to T^*M\) is a connection on \(T^*M\) induced by \(K_{TM}\). 
If \(u\colon M \to N\) is a smooth map, the \emph{second-covariant derivative} \(D_K^2 u \colon M \to T^*M \otimes T^*M \otimes TN\) with respect to the connection \(K_{T^*M\otimes TN}\) is defined by
\[
D^2_K u = K_{T^*M\otimes TN} \circ \, T^2 u. 
\]
More specifically, for every \(x\in M\), the map \(D^2_K u(x) \colon T_x M \times T_x M \to T_{u(x)} N\) is bilinear~\cite{wendl}*{equation (3.10)} and for every \(e\in T_x M\), by equation~\eqref{IdonTE}, we have the decomposition
\[
T^2u(x)[e] = \operatorname{Vert} \circ \, D^2_K u(x)[e] + \operatorname{Hor}_K \circ \, Tu(x)[e], 
\]
where \(D^2_K u(x)[e] = K_{T^*M \otimes TN} (T^2u(x)[e]) \in \mathcal{L}(T_x M, T_{u(x)} N)\) and with the usual identification of \(M \times TN\) with \(M \times \{0\} \times TN \subseteq TM \times TN\). 

\subsection{Definition and properties of colocal weak covariant derivatives} 
In this part, we assume that \(M\) and \(N\) are affine manifolds. 

We first define the notion of colocal weak covariant derivatives. 
\begin{de}\label{weakSecondCovariant}
Let \(\upsilon \colon M \to T^*M \otimes TN\) be a colocally weakly differentiable map that covers a map \(u \colon M \to N\). 
A map \(D_K \upsilon \colon M \to T^*M \otimes T^*M \otimes TN\) is a \emph{colocal weak covariant derivative} of \(\upsilon\) whenever \(D_K \upsilon\) is measurable, for almost every \(x\in M\), the map \(D_K \upsilon(x) \colon T_x M \times T_x M \to T_{u(x)} N\) is bilinear and for every \(f\in C^1_c(T^*M\otimes TN,\R)\), 
\[
(Tf)(\upsilon)[\operatorname{Vert} \circ\,D_K\upsilon] 
= T(f \circ \upsilon) - (Tf)(\upsilon) [\operatorname{Hor}_K \circ\, \upsilon]
\]
almost everywhere in \(M\). 
\end{de}

Since the vertical lifting \(\operatorname{Vert}\) is injective and since \(Tf\) can be taken to be injective at a given set of points, a map \(\upsilon\) has at most one  colocal weak covariant derivative. 

The previous definition~\ref{weakSecondCovariant} requires the colocal weak differentiability of the map \(\upsilon\). 
The main result of the current section is that the colocal weak derivative and the colocal weak covariant derivative are equivalent objects.

\begin{propo}\label{covariantTTU}
Let \(u \colon M \to N\) be a twice colocally weakly differentiable map. 
Then the map \(Tu \colon M \to T^*M\otimes TN\) has a colocal weak covariant derivative \(D_K (Tu) \colon M \to T^*M \otimes T^*M\otimes TN\) 
if and only if 
the map \(Tu \colon M \to T^*M\otimes TN\) has a colocal weak derivative \(T^2u \colon TM \to T(T^*M\otimes TN)\). 
Furthermore, they are unique and almost everywhere in \(M\) 
\begin{align*}
T^2u &= \operatorname{Vert} \circ\,D_K(Tu) + \operatorname{Hor}_K \circ\,Tu, &
&\text{ and }&
D_K(Tu) &= K_{T^*M\otimes TN} \circ T^2u. 
\end{align*}
\end{propo}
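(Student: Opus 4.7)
The plan is to derive both implications together with the explicit formulas by applying the canonical decomposition~\eqref{IdonTE} of a tangent vector on the bundle \(T^*M \otimes TN\) into its vertical and horizontal parts, once pointwise at \(T^2u(x)[e]\) for almost every \(x \in M\) and every \(e \in T_x M\).

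First, since \(u\) is twice colocally weakly differentiable, the bundle morphism \(Tu \colon M \to T^*M \otimes TN\) is colocally weakly differentiable, so by proposition~\ref{propoTTU} the colocal weak derivative \(T^2u\) exists and is unique, and by the same proposition, using that \(\pi_{M \times N} \circ T u = (\mathrm{id}_M, u)\), one obtains \(T\pi_{M\times N} \circ T^2u = Tu\) in the appropriate sense. I would then \emph{define} \(D_K(Tu) := K_{T^*M\otimes TN} \circ T^2u\) and check routinely that this is a measurable map whose value at almost every \(x \in M\) is an element of \(T^*_x M \otimes T^*_x M \otimes T_{u(x)} N\): linearity in \(e \in T_x M\) comes from \(T^2u(x)\) being a bundle morphism and \(K_{T^*M\otimes TN}\) being fiberwise linear, and the remaining tensor structure is inherited from the fiber of \(T^*M\otimes TN\) above \((x,u(x))\), which gives the required bilinearity.

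Next, applying~\eqref{IdonTE} pointwise to \(T^2u(x)[e] \in T_{Tu(x)}(T^*M \otimes TN)\) together with \(T\pi_{M\times N} \circ T^2u = Tu\) yields
\begin{equation*}
T^2u = \operatorname{Vert} \circ\, D_K(Tu) + \operatorname{Hor}_K \circ\, Tu
\end{equation*}
almost everywhere in \(M\). Composing with \(Tf\) for any \(f\in C^1_c(T^*M\otimes TN,\R)\) and using the chain rule \(T(f\circ Tu) = Tf \circ T^2u\) from proposition~\ref{propoTTU} gives exactly the identity required by definition~\ref{weakSecondCovariant}, so that \(D_K(Tu) = K_{T^*M\otimes TN} \circ T^2u\) is indeed a colocal weak covariant derivative of \(Tu\).

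For the converse implication, if one is given a colocal weak covariant derivative \(D_K(Tu)\), then I would set \(\widetilde{T^2u} := \operatorname{Vert} \circ D_K(Tu) + \operatorname{Hor}_K \circ Tu\) and check, using once more the decomposition~\eqref{IdonTE} applied to \(Tf\) and the defining identity of \(D_K(Tu)\), that \(T(f\circ Tu) = Tf \circ \widetilde{T^2u}\) almost everywhere for every admissible \(f\); by the uniqueness part of proposition~\ref{propoTTU}, \(\widetilde{T^2u}\) must then coincide with \(T^2u\), and applying \(K_{T^*M\otimes TN}\) recovers \(D_K(Tu) = K_{T^*M\otimes TN} \circ T^2u\). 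Uniqueness of both objects follows from the uniqueness of the colocal weak derivative~\cite{cvs}*{proposition 1.5} and from the injectivity of \(\operatorname{Vert}\) together with the remark following definition~\ref{weakSecondCovariant}. The main technical obstacle I anticipate is the bookkeeping on \(T(T^*M\otimes TN)\): one must be careful to interpret \(\operatorname{Hor}_K \circ Tu\) as the horizontal lift evaluated along the section \(Tu\), verify that \(T\pi_{M\times N}\circ T^2u = Tu\) rigorously from the properties recorded in proposition~\ref{propoTTU}, and ensure that the test functions used in the chain rule indeed satisfy \(f\circ Tu \in W^{1,1}_{\mathrm{loc}}(M)\) so that proposition~\ref{propoTTU} applies; the latter is handled by localizing through extended local charts as in lemma~\ref{lemmeExtended}.
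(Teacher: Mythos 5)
Your argument is correct and follows essentially the same route as the paper: both directions come from the decomposition~\eqref{IdonTE} applied along \(T^2u\), the chain rule identity \(T(f\circ Tu)=Tf\circ T^2u\) from proposition~\ref{propoTTU}, and uniqueness of the colocal weak derivative and of the colocal weak covariant derivative. The extra bookkeeping you flag (interpreting \(\operatorname{Hor}_K\circ Tu\), \(T\pi_{M\times N}\circ T^2u = Tu\), admissibility of test functions) is handled at the same level of detail in the paper, so no genuine gap remains.
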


Thus 
the colocal weak covariant derivatives for two connections \(K_1\) and \(K_2\) can be related to each others via the identity 
\[
  \operatorname{Vert} \circ \, D_{K_1} (Tu) + \operatorname{Hor}_{K_1} \circ \,Tu
  = \operatorname{Vert} \circ \, D_{K_2} (Tu) + \operatorname{Hor}_{K_2} \circ \,Tu.
\]

\begin{proof}[Proof of proposition~\ref{covariantTTU}]
On the one hand, we assume that the colocal weak covariant derivative \(D_K (Tu)\) exists. 
Then for every \(f\in C^1_c(T^*M\otimes TN,\R)\),
\[
T(f\circ\,Tu) = (Tf)(Tu)[\operatorname{Vert} \circ\, D_K (Tu)  + \operatorname{Hor}_K \circ\, Tu]
\]
and we can take
\[
T^2 u = \operatorname{Vert} \circ\,D_K(Tu) + \operatorname{Hor}_K \circ\, Tu
\]
for the second-order derivative of \(u\).

Conversely, we assume that there is a colocal weak derivative \(T^2u \colon TM \to T(T^*M\otimes TN)\) such that for every \(f\in C^1_c(T^*M \otimes TN, \R)\), \(T(f\circ\,Tu) = Tf \circ T^2u\) almost everywhere in \(M\). 
In view of the identity~\eqref{IdonTE}, almost everywhere in \(M\)
\[
 T (f \circ\,Tu) =  (Tf)(Tu)[\operatorname{Vert} \circ\, K_{T^*M\otimes TN} \circ T^2u  + \operatorname{Hor}_K \circ\, Tu]
\]
and we can thus take \(D_K(Tu) = K_{T^*M\otimes TN} \circ T^2 u\). 
\end{proof}

We assume that \((M,g_M)\) and \((N,g_N)\) are Riemannian manifolds with the respective Levi--Civita connection maps on \(TM\) and on \(TN\). 
Our concept of covariant derivative is, under some technical assumptions, equivalent to the notion of covariant derivative of P. Hornung and R. Moser~\cite{hm}*{definition 2.5}. 

Indeed, the metrics on vectors of \(TM\) and \(TN\) induce a metric \(g^*_M\otimes g^*_M \otimes g_N\) on \(T^*M \otimes T^*M \otimes TN\). 
This metric can be computed for every bilinear map \(\xi \colon T_x M \times T_x M \to T_y N\) by
\[ 
(g^*_M\otimes g^*_M \otimes g_N)(\xi,\xi) = \sum_{1 \le i,j \le m} g_N \bigl(\xi(e_i,e_j), \xi(e_i,e_j)\bigr),
\]
where \((e_i)_{1\le i \le m}\) is any orthonormal basis in \(\pi^{-1}_M(\{x\})\) with respect to the Riemannian metric \(g_M\). 

\begin{propo}\label{HornungMoserLink}
Let \(u \in \dot{W}^{1,1}_\mathrm{loc}(M,N)\) and let \(f \colon T^*M\otimes TN \to T^*M\otimes TN\) be defined for every \(\xi \in T^*M\otimes TN\) by 
\[
f(\xi) = \frac{\xi}{\sqrt{1+\abs{\xi}^2_{g^*_M\otimes g_N}}}.
\]
If \(f \circ\,Tu\) is colocally weakly differentiable, then \(Tu\) is colocally weakly differentiable and almost everywhere in \(M\)
\begin{align*}
D_K(Tu)
= \Bigl(\id - \frac{Tu \otimes Tu}{1+\abs{Tu}_{g^*_M\otimes g_N}^2} \Bigr)^{-1} \sqrt{1+ \abs{Tu}^2_{g^*_M \otimes g_N}} \,
D_K (f \circ\,Tu) .
\end{align*}
Conversely if \(Tu\) is colocally weakly differentiable and if \(\abs{D_K(Tu)}_{g^*_M\otimes g^*_M \otimes g_N} \in L^1_\mathrm{loc}(M)\), then \(f \circ\,Tu\) is colocally weakly differentiable.
\end{propo}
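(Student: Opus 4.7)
The plan is to exploit that $f$ is a smooth fibre-wise diffeomorphism from $T^*M \otimes TN$ onto the open unit ball bundle with respect to $\abs{\cdot}_{g^*_M\otimes g_N}$, with inverse $f^{-1}(\eta) = \eta/\sqrt{1-\abs{\eta}^2_{g^*_M\otimes g_N}}$ and fibre-wise differential
\[
Df(\xi) = \frac{1}{\sqrt{1+\abs{\xi}^2_{g^*_M\otimes g_N}}} \Bigl(\id - \frac{\xi \otimes \xi}{1+\abs{\xi}^2_{g^*_M\otimes g_N}}\Bigr).
\]
Inverting this fibre-wise (the operator in brackets is a rank-one perturbation of the identity) yields $\sqrt{1+\abs{\xi}^2}\,(\id - \xi\otimes\xi/(1+\abs{\xi}^2))^{-1}$, which matches the factor appearing in the stated formula once the chain rule is in place.

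For the direct implication, given $g \in C^1_c(T^*M \otimes TN, \R)$, I would set $\tilde g = g \circ f^{-1}$ on the open unit ball bundle and extend by zero outside. Since $\supp g$ is compact in $T^*M \otimes TN$ and $f$ is a diffeomorphism onto the open unit ball bundle, its image $f(\supp g)$ is compactly contained in the open unit ball bundle, so the zero extension of $\tilde g$ belongs to $C^1_c(T^*M \otimes TN, \R)$. Then $g \circ Tu = \tilde g \circ (f \circ Tu)$ is weakly differentiable by hypothesis, yielding colocal weak differentiability of $Tu$. The explicit formula for $D_K(Tu)$ then follows from proposition~\ref{covariantTTU} and the covariant chain rule applied to $Tu = f^{-1} \circ (f \circ Tu)$.

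For the converse, the difficulty is that $g \circ f$ need not be compactly supported, because $f(\xi)$ may enter $\supp g$ for arbitrarily large $\abs{\xi}$. The plan is to truncate: choose $\phi_N \in C^1_c(T^*M\otimes TN, \R)$ with $\phi_N = 1$ on $\{\abs{\xi}\le N\}$ and $\abs{T\phi_N} \le C/N$, and set $h_N = (g\circ f)\phi_N \in C^1_c$. By colocal weak differentiability of $Tu$, each $h_N \circ Tu$ is weakly differentiable, and $h_N \circ Tu \to (g\circ f)\circ Tu$ pointwise and in $L^\infty$. The key estimate is $\abs{T(g\circ f)(\xi)} \le \norm{Tg}_\infty/\sqrt{1+\abs{\xi}^2}$, coming from $\abs{Df(\xi)} \le 1/\sqrt{1+\abs{\xi}^2}$. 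Combined with the decomposition $T^2 u = \Vertlift \circ D_K(Tu) + \operatorname{Hor}_K \circ Tu$ from proposition~\ref{covariantTTU} and the hypothesis $\abs{D_K(Tu)}_{g^*_M\otimes g^*_M \otimes g_N} \in L^1_\mathrm{loc}(M)$, this provides an $L^1_\mathrm{loc}$-dominator for the vertical part of $T(h_N \circ Tu)$, while the contribution of $T\phi_N$ is bounded by $C\norm{g}_\infty/N$ on $\{N \le \abs{Tu} \le 2N\}$ and so vanishes in $L^1_\mathrm{loc}$ as $N \to \infty$.

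The main obstacle is controlling the horizontal term $(Th_N)(Tu)[\operatorname{Hor}_K \circ Tu]$ and passing to the limit there. The assumption that $K$ is Levi--Civita is essential: parallel transport preserves $\abs{\xi}_{g^*_M\otimes g_N}$, so both $f$ and the radial cutoff $\phi_N$ are horizontally invariant, which reduces the horizontal derivative of $h_N$ to that of $g$ at $f(\xi)$, uniformly bounded by $\norm{Tg}_\infty$. Dominated convergence in the distributional definition of the weak derivative then identifies $(g\circ f)\circ Tu$ as colocally weakly differentiable.
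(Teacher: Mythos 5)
Your direct implication is essentially the paper's own: both reduce the colocal weak differentiability of \(Tu\) to that of \(f\circ Tu\) by writing \(g\circ Tu=(g\circ f^{-1})\circ(f\circ Tu)\) with \(g\circ f^{-1}\) extended by zero to a \(C^1_c\) function, and both obtain the displayed formula from the fibrewise inverse of \(Df\) together with the fact that the connection is metric (so composition with the radial map \(f\) only affects the vertical part) and uniqueness of colocal weak covariant derivatives. The converse is where you genuinely diverge. The paper notes that \(\abs{T^2u}^2_{g^*_M\otimes G^S_{T^*M\otimes TN}}=\abs{Tu}^2_{g^*_M\otimes g_N}+\abs{D_K(Tu)}^2_{g^*_M\otimes g^*_M\otimes g_N}\), so the hypotheses give \(Tu\in\dot{W}^{1,1}_\mathrm{loc}(M,T^*M\otimes TN)\) for the Sasaki metric, and then it simply invokes the chain rule for Lipschitz \(C^1\) functions on Sobolev maps between manifolds (\cite{cvs}*{proposition 2.6}) applied to \(h\circ f\), which is Lipschitz for the Sasaki distance; your truncation and dominated-convergence argument replaces that citation by a self-contained closing-lemma argument, which is a legitimate alternative. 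Two points in it need repair. First, the radial cutoff \(\phi_N\) is not compactly supported when \(M\times N\) is noncompact; only the product \(h_N=(g\circ f)\phi_N\) is, because \(\supp g\) projects onto a compact subset of \(M\times N\) --- harmless, but as written ``\(\phi_N\in C^1_c\)'' is not available. Second, the horizontal contribution is not uniformly bounded by \(\norm{Tg}_\infty\): the horizontal component of \(T^2u(x)[e]\) is the horizontal lift of the base velocity, whose Sasaki length grows like \(\abs{Tu(x)}_{g^*_M\otimes g_N}\), and membership of \(f(Tu)\) in \(\supp g\) gives no bound on \(\abs{Tu}\) since \(f\) compresses all fibres into the unit ball; the correct dominator is of the form \(\norm{Tg}_\infty\bigl(1+\abs{Tu}_{g^*_M\otimes g_N}\bigr)+\norm{Tg}_\infty\abs{D_K(Tu)}_{g^*_M\otimes g^*_M\otimes g_N}\), which is locally integrable only thanks to the standing hypothesis \(u\in\dot{W}^{1,1}_\mathrm{loc}(M,N)\) --- a hypothesis your write-up never invokes, and which is exactly where the paper uses it as well. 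With these two repairs your converse closes and is correct.
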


The advantage of this formulation is that \(f \circ\,Tu\) is a bounded measurable bundle morphism, and thus if \(N\) is compact, its colocal weak differentiability is equivalent to its weak differentiability in local charts or in an isometric embedding. In Sobolev spaces, the additional integrability assumption for the converse implication is automatically satisfied.

\begin{proof}[Proof of proposition~\ref{HornungMoserLink}]
Let us first assume  that the composite map \(f \circ\,Tu\) is colocally weakly differentiable. 
Since the map \(f\) is invertible on its image, for every \(h \in C^1_c(T^*M\otimes TN,\R)\), 
\[
h \circ\,Tu = (h\circ f^{-1}) \circ (f\circ\,Tu),
\]
where the map \(h \circ f^{-1}\) can be extended to a compactly supported map. 
By the definition of colocal weak differentiability of \(f \circ\,Tu\), the right-hand side is weakly differentiable and thus the left-side is also weakly differentiable and the map \(Tu\) is colocally weakly differentiable by definition. In particular we obtain that 
\[
  T f \circ T^2 u = T (f \circ\,Tu),
\]
and the identity follows. Since the connection \(K_{T^*M\otimes TN}\) is metric, by geometric properties of \(f\) and uniqueness of colocal weak covariant derivatives, we can deduce the desired formula.

Conversely, almost everywhere in \(M\)
\begin{equation*}
\abs{T^2u}^2_{g^*_M \otimes G^S_{T^*M\otimes TN}} = \abs{Tu}_{g^*_M\otimes g_N}^2 + \abs{D_K (Tu)}_{g^*_M\otimes g^*_M \otimes g_N}^2. 
\end{equation*}
Hence, \(Tu \in \dot{W}^{1,1}_\mathrm{loc}(M,T^*M\otimes TN)\) and for every \(h\in C^1_c(T^*M \otimes TN, \R)\), since the map \( h \circ f \in C^1(T^*M\otimes TN, \R)\) is Lipschitz-continuous, by the chain rule in Sobolev spaces for maps between manifolds~\cite{cvs}*{proposition 2.6}, the map \((h \circ f) \circ\,Tu\) is weakly differentiable, and so \(f \circ\,Tu\) is colocally weakly differentiable. 
\end{proof}

\subsection{Sequences of second order Sobolev maps} 
We now define intrinsic second-order Sobolev spaces. 

\begin{de}\label{defW2p}
Let \(p \in [1,+\infty)\). 
A map \(u \colon M \to N\) belongs to the \emph{second-order Sobolev space} \(\dot{W}^{2,p}(M,N)\) whenever the map \(u\) is twice colocally weakly differentiable and \\ \(\abs{D_K (Tu)}_{g^*_M \otimes g^*_M \otimes g_N} \in L^p(M)\). 
\end{de}

A classical technique in the calculus of variations is to extract from a minimizing sequence a subsequence that converges almost everywhere. 
For that, we have a Rellich--Kondrashov type compactness theorem as follows. 
\begin{propo}[Rellich--Kondrashov for second-order Sobolev maps]\label{RellichSobolev}
Let \((u_\ell)_{\ell \in \N}\) be a sequence of twice colocally weakly differentiable maps from \(M\) to \(N\) and let \(v \in \dot{W}^{1,p}(M,N)\).
If \((N, d)\) is complete, if there exists \(p \in [1, +\infty)\) such that 
\[
  \sup_{\ell \in \N} \int_{M} d (u_\ell, v)^p + \abs{T u_\ell}_{g^*_M\otimes g_N}^p + \abs{D_K(Tu_\ell)}_{g^*_M\otimes g^*_M \otimes g_N}^p < +\infty,
\]
then there is a subsequence \((Tu_{\ell_k})_{k \in \N}\) that converges to a measurable map \(\upsilon \colon M \to T^*M \otimes TN\) almost everywhere in \(M\).
\end{propo}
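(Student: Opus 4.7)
The plan is to reduce the statement to the first-order Rellich--Kondrashov-type compactness theorem for colocally weakly differentiable maps proved in~\cite{cvs}, applied to the sequence of derivatives \((Tu_\ell)_{\ell \in \N}\) viewed as first-order Sobolev maps from \(M\) into the Riemannian manifold \((T^*M \otimes TN, G^S_{T^*M \otimes TN})\) endowed with the Sasaki metric associated to the connection \(K_{T^*M \otimes TN}\). In this reformulation the colocal weak derivative of the bundle morphism \(Tu_\ell\), seen as a map from \(M\) into \(T^*M \otimes TN\), is exactly \(T^2 u_\ell\), so that a first-order compactness statement for \((Tu_\ell)\) immediately produces the required almost everywhere converging subsequence.

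The first ingredient I would establish is a uniform \(L^p\) bound on these derivatives. By the identity already used in the proof of proposition~\ref{HornungMoserLink}, almost everywhere in \(M\),
\[
\abs{T^2 u_\ell}^2_{g^*_M \otimes G^S_{T^*M \otimes TN}}
= \abs{Tu_\ell}^2_{g^*_M \otimes g_N} + \abs{D_K(Tu_\ell)}^2_{g^*_M \otimes g^*_M \otimes g_N},
\]
so the hypothesis provides \(\sup_{\ell \in \N}\int_M \abs{T^2 u_\ell}^p_{g^*_M \otimes G^S_{T^*M \otimes TN}} < +\infty\).

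The second ingredient is a uniform \(L^p\) bound on the Sasaki distance to a reference map, for which I would take \(Tv \colon M \to T^*M \otimes TN\) (which is measurable since \(v \in \dot{W}^{1,p}(M, N)\)). Exploiting the horizontal--vertical decomposition of \(T(T^*M\otimes TN)\), I would construct an admissible path from \(Tu_\ell(x)\) to \(Tv(x)\) by descending vertically in the fiber \(T^*_x M \otimes T_{u_\ell(x)} N\) to its zero, transporting this zero horizontally along a minimizing geodesic from \((x, u_\ell(x))\) to \((x, v(x))\) in \(M \times N\), and finally ascending vertically to \(Tv(x)\). Its Sasaki length is controlled pointwise by
\[
\abs{Tu_\ell(x)}_{g^*_M \otimes g_N}
+ d_N\bigl(u_\ell(x), v(x)\bigr)
+ \abs{Tv(x)}_{g^*_M \otimes g_N},
\]
which by the hypothesis and \(v \in \dot{W}^{1,p}(M,N)\) is bounded in \(L^p(M)\) uniformly in \(\ell\). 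The completeness of \((N, d_N)\) is used here to guarantee that such minimizing geodesics exist, at least locally.

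With these two bounds at hand, the first-order compactness theorem of~\cite{cvs} applied to \((Tu_\ell)_{\ell\in \N}\) with target \((T^*M \otimes TN, G^S_{T^*M \otimes TN})\) and reference \(Tv\) yields a subsequence \((Tu_{\ell_k})_{k \in \N}\) that converges almost everywhere on \(M\) to a measurable map \(\upsilon\colon M \to T^*M \otimes TN\). I expect the main technical obstacle to be the Sasaki distance estimate: because the Sasaki metric entangles horizontal and vertical components, the triangle-type inequality above must be justified by explicit local computations involving parallel transport, and one must check that for almost every \(x\) the basepoints \(u_\ell(x)\) and \(v(x)\) can indeed be joined by an admissible geodesic of length essentially \(d_N(u_\ell(x), v(x))\), which is precisely where the hypothesis of completeness of \(N\) enters the argument.
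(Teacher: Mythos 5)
Your proposal is correct and follows essentially the same route as the paper: both reduce to the first-order Rellich--Kondrashov property of~\cite{cvs}*{proposition 3.4} applied to \((Tu_\ell)_{\ell\in\N}\) viewed as maps into \((T^*M\otimes TN, G^S_{T^*M\otimes TN})\), using the identity \(\abs{T^2u_\ell}^2 = \abs{Tu_\ell}^2_{g^*_M\otimes g_N} + \abs{D_K(Tu_\ell)}^2_{g^*_M\otimes g^*_M\otimes g_N}\) together with the pointwise Sasaki-distance bound \(d^S(Tu_\ell,Tv)\lesssim d(u_\ell,v)+\abs{Tu_\ell}+\abs{Tv}\), which your explicit vertical--horizontal--vertical path construction simply makes more detailed than the paper's one-line assertion. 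The only slight discrepancy is the role of completeness: the paper invokes completeness of \((N,d)\) to ensure that \((T^*M\otimes TN, d^S)\) is complete, which is the hypothesis needed for the first-order compactness result, whereas the distance estimate itself requires no minimizing geodesics (nearly length-minimizing paths suffice since \(d_N\) is an infimum of path lengths), so you should transfer the completeness check to the target space rather than to the path construction.
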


In particular, if the map \(\upsilon\) covers the map \(u\colon M \to N\), the subsequence \((u_{\ell_k})_{k\in \N}\) converges to \(u\) almost everywhere in \(M\). 

For \(p \in (1,2)\), P. Hornung and R. Moser have a similar result~\cite{hm}*{lemma 3.2} for their notion of second-order Sobolev spaces~\cite{hm}*{definition 3.1}.

\begin{proof}[Proof of proposition~\ref{RellichSobolev}]
Since \((N,d)\) is complete, the space \((T^*M\otimes TN, d^S)\), where \(d^S\) is the distance induced by the Sasaki metric \(G^S_{T^*M\otimes TN}\), is complete.
Next, there exists \(C >0\) such that 
\[
\sup_{\ell \in \N} \int_M d^S(Tu,Tv)^p \le C \sup_{\ell \in \N}\int_M d(u_\ell,v)^p + \abs{Tu_\ell}_{g^*_M\otimes g_N}^p + \abs{Tv}_{g^*_M\otimes g_N}^p < +\infty.
\]
Since for every \(\ell \in \N\), 
\[
\abs{T^2u_\ell}^2_{g^*_M \otimes G^S_{T^*M\otimes TN}} 
= \abs{Tu_\ell}_{g^*_M\otimes g_N}^2 + \abs{D_K (Tu_\ell)}_{g^*_M\otimes g^*_M \otimes g_N}^2 
\]
almost everywhere in \(M\), 
we have 
\[
\sup_{\ell\in \N} \int_M \abs{T^2u_\ell}^p < +\infty.  
\]
Hence, the sequence \((Tu_{\ell})_{\ell\in \N}\) satisfies all the assumptions of the Rellich--Kondrashov compactness property for Sobolev maps~\cite{cvs}*{proposition 3.4} and so there exists a subsequence \((Tu_{\ell_k})_{k \in \N}\) that converges to a measurable map \(\upsilon \colon M \to T^*M\otimes TN\) almost everywhere in \(M\).
\end{proof}

Before considering sequences of Sobolev maps, we rephrased the closure property (proposition~\ref{closurePropertyTwice}) with the notion of colocal weak covariant derivative. 

\begin{propo}[Closure property]\label{closureProperty}
Let \((u_\ell)_{\ell \in \N}\) be a sequence of twice colocally weakly differentiable maps from \(M\) to \(N\). 
If the sequence \((Tu_\ell)_{\ell \in \N}\) converges locally in measure to a measurable map \(\upsilon \colon M \to T^*M \otimes TN\) that covers a map \(u\colon M \to N\), if the sequences \((Tu_\ell)_{\ell \in \N} \) and \((D_K (Tu_\ell))_{\ell \in \N}\) are bilocally uniformly integrable, then the map \(u\) is twice colocally weakly differentiable. 
\end{propo}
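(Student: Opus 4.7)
My plan is to deduce this proposition directly from Proposition~\ref{closurePropertyTwice} by using Proposition~\ref{covariantTTU} to convert the assumption on the colocal weak covariant derivatives into an assumption on the second colocal weak derivatives $T^2 u_\ell$. Since each $u_\ell$ is twice colocally weakly differentiable, Proposition~\ref{covariantTTU} yields the almost-everywhere identity
\[
T^2 u_\ell = \operatorname{Vert} \circ D_K(Tu_\ell) + \operatorname{Hor}_K \circ Tu_\ell.
\]
The local-in-measure convergence of $(Tu_\ell)$ and the bilocal uniform integrability of $(Tu_\ell)$ already match the corresponding hypotheses of Proposition~\ref{closurePropertyTwice}; what is missing is the bilocal uniform integrability of $(T^2 u_\ell)$.

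To establish it I would work in bundle charts. Fix $x \in M$ and $y \in N$ and choose a local chart $\psi \colon V \subseteq M \to \R^m$ around $x$. After possibly shrinking $V$, one can select a common open neighbourhood $U \subseteq N$ of $y$ together with bundle charts of $T^*M \otimes TN$, of $T^*M \otimes T^*M \otimes TN$ and of $T(T^*M \otimes TN)$ over $U$ that are compatible with each other and with the connection, in the sense that the vertical-horizontal decomposition becomes a coordinate block decomposition. In such charts the vertical lift is a fiberwise linear isomorphism onto its image, and the horizontal lift is linear with coefficients given by smooth Christoffel-type symbols of the connection. These symbols are bounded on compact subsets of $V \times U$, so there exists a constant $C$ for which
\[
\abs{T^2 u_\ell} \le C\bigl(\abs{D_K(Tu_\ell)} + \abs{Tu_\ell}\bigr)
\]
almost everywhere in $\psi\bigl(V \cap u_\ell^{-1}(U)\bigr)$, where the norms are those of the chosen charts. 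Given $\varepsilon > 0$, applying bilocal uniform integrability to $(D_K(Tu_\ell))$ and to $(Tu_\ell)$ with parameter $\varepsilon/(2C)$ produces two moduli whose minimum is a common modulus for $(T^2 u_\ell)$.

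With the bilocal uniform integrability of $(T^2 u_\ell)$ in hand, Proposition~\ref{closurePropertyTwice} applies directly and delivers the twice colocal weak differentiability of $u$. The main obstacle is the middle step: arranging bundle charts that simultaneously trivialize the three bundles above in a manner compatible with the connection, so that $\operatorname{Vert}$ and $\operatorname{Hor}_K$ provide the required linear pointwise control of the chart-norm of $T^2 u_\ell$ by those of $D_K(Tu_\ell)$ and $Tu_\ell$. Once this geometric bookkeeping is carried out, the transfer of uniform integrability and the final appeal to Proposition~\ref{closurePropertyTwice} are both routine.
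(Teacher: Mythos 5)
Your argument is correct and follows essentially the same route as the paper: it likewise uses the decomposition \(T^2u_\ell = \operatorname{Vert}\circ D_K(Tu_\ell) + \operatorname{Hor}_K\circ Tu_\ell\) from proposition~\ref{covariantTTU} in local bundle charts to transfer bilocal uniform integrability to \((T^2u_\ell)_{\ell\in\N}\), and then invokes proposition~\ref{closurePropertyTwice}. The only cosmetic difference is that the paper bounds the horizontal term uniformly on the relevant chart region (using the compact support of the chart function, so that the uniform integrability of \((T^2u_\ell)_{\ell\in\N}\) is equivalent to that of \((D_K(Tu_\ell))_{\ell\in\N}\) alone), whereas you absorb that term via the assumed bilocal uniform integrability of \((Tu_\ell)_{\ell\in\N}\), which is equally legitimate under the hypotheses of the proposition.
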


It is a direct consequence of the following lemma and proposition~\ref{closurePropertyTwice}.

\begin{lemme}
Let \((u_\ell)_{\ell \in \N}\) be a sequence of twice colocally weakly differentiable maps from \(M\) to \(N\). 
Then the sequence \((T^2 u_\ell)_{\ell \in \N}\) is bilocally uniformly integrable if and only if 
the sequence \((D_K (Tu_\ell))_{\ell \in \N}\) is bilocally uniformly integrable. 
\end{lemme}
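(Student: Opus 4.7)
Both implications rest on the two pointwise identities supplied by proposition~\ref{covariantTTU},
\[
  T^2 u_\ell = \operatorname{Vert} \circ D_K(Tu_\ell) + \operatorname{Hor}_K \circ Tu_\ell,
  \qquad
  D_K(Tu_\ell) = K_{T^*M\otimes TN} \circ T^2 u_\ell,
\]
together with the \(C^1\) character of the bundle morphisms \(\operatorname{Vert}\), \(\operatorname{Hor}_K\) and \(K_{T^*M\otimes TN}\); in any bundle trivialisation over a relatively compact subset of \(T^*M\otimes TN\) they are fibrewise linear and locally Lipschitz, hence pre- and post-composition with them preserves \(\varepsilon\)--\(\delta\) estimates on sets where the base value stays bounded.

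For the implication from \((D_K(Tu_\ell))\) bilocally uniformly integrable to \((T^2u_\ell)\) bilocally uniformly integrable, I fix \((x_0,\eta_0)\in M\times(T^*M\otimes TN)\) and set \(y_0=\pi_N(\eta_0)\). I apply the hypothesis at \((x_0,y_0)\) to obtain a local chart \(\psi\colon V\to\R^m\) and a bundle trivialisation over a relatively compact open set \(U\subseteq N\) containing \(y_0\); then I shrink to a relatively compact neighbourhood \(U'\subseteq\pi_N^{-1}(U)\) of \(\eta_0\) inside \(T^*M\otimes TN\). On \(\psi(Tu_\ell^{-1}(U'))\), the value of \(Tu_\ell\) lies in \(U'\) and is therefore uniformly bounded, so the decomposition of \(T^2u_\ell\) together with the local boundedness of \(\operatorname{Vert}\) and \(\operatorname{Hor}_K\) gives a pointwise inequality of the form \(|T^2u_\ell|\le C_1|D_K(Tu_\ell)|+C_2\) on that set, which transfers the bilocal uniform integrability to \((T^2u_\ell)\) at \((x_0,\eta_0)\).

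For the reverse direction, I fix \((x_0,y_0)\in M\times N\) and choose a countable exhaustion of the fibre \(T^*_{x_0}M\otimes T_{y_0}N\) by relatively compact neighbourhoods \(U''_n\) in \(T^*M\otimes TN\) lying above a common open chart \(U\subseteq N\) around \(y_0\). For each \(n\), the bilocal uniform integrability of \((T^2u_\ell)\) at some base point of \(U''_n\), combined with the identity \(D_K(Tu_\ell)=K_{T^*M\otimes TN}\circ T^2u_\ell\) and the Lipschitz continuity of \(K_{T^*M\otimes TN}\) on \(U''_n\), produces an \(\varepsilon\)--\(\delta\) estimate for \(|D_K(Tu_\ell)|\) on \(\psi(Tu_\ell^{-1}(U''_n))\); patching these along the exhaustion \(\bigcup_n Tu_\ell^{-1}(U''_n)=u_\ell^{-1}(U)\) yields the bilocal uniform integrability of \((D_K(Tu_\ell))\) at \((x_0,y_0)\).

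The main obstacle is precisely the mismatch of base spaces: \((T^2u_\ell)\) is indexed by pairs in \(M\times(T^*M\otimes TN)\) while \((D_K(Tu_\ell))\) is indexed by pairs in \(M\times N\), and the horizontal contribution \(\operatorname{Hor}_K\circ Tu_\ell\) admits no a priori control in terms of \(D_K(Tu_\ell)\) alone. The device of restricting to \(Tu_\ell^{-1}(U')\) with \(U'\) relatively compact in \(T^*M\otimes TN\) tames this horizontal term in the easy direction, and the countable exhaustion of the fibre — together with the patching of the per-\(n\) estimates — is what makes the other direction work.
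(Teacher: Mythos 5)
Your first implication (from \((D_K(Tu_\ell))_{\ell\in\N}\) to \((T^2u_\ell)_{\ell\in\N}\)) is correct and is essentially the paper's argument: localize where \(Tu_\ell\) stays in a relatively compact subset of \(T^*M\otimes TN\), use the decomposition \(T^2u_\ell=\operatorname{Vert}\circ D_K(Tu_\ell)+\operatorname{Hor}_K\circ Tu_\ell\) from proposition~\ref{covariantTTU}, and observe that on such a set the horizontal term is uniformly bounded in \(\ell\), so the \(\varepsilon\)--\(\delta\) estimate transfers (the paper does exactly this, with the boundedness coming from a compactly supported extended chart \(\varphi\)).

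The reverse implication as you set it up contains a genuine gap. The paper keeps both integrability conditions localized on the same sets \((Tu_\ell)^{-1}(U)\), \(U\) a relatively compact neighbourhood in \(T^*M\otimes TN\) (that is, \(D_K(Tu_\ell)\), like \(T^2u_\ell\), is tested as a morphism covering \(Tu_\ell\)); with that reading the converse is immediate from the same identity, since \(D_K(Tu_\ell)=K_{T^*M\otimes TN}\circ T^2u_\ell\) and \(K_{T^*M\otimes TN}\) is fibrewise linear and bounded on the compact region, or equivalently one subtracts the uniformly bounded horizontal term. You instead try to prove a strictly stronger statement, an estimate on the sets \(u_\ell^{-1}(U)\) with \(U\subseteq N\), by exhausting the fibre with relatively compact sets \(U''_n\) and ``patching'' the estimates. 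That patching does not work: for each \(n\) the hypothesis yields some \(\delta_n>0\), but to control \(\int_{W\cap\psi(u_\ell^{-1}(U))}\) with a single \(\delta\) you would need \(\delta\le\delta_n(\varepsilon 2^{-n})\) for every \(n\) simultaneously, and this infimum may well be \(0\); uniform integrability is not stable under countable unions of the localizing sets. (There is also a minor imprecision: the \(U''_n\) must exhaust the part of the bundle lying over a neighbourhood of \((x_0,y_0)\) in \(M\times N\), not merely the single fibre \(T^*_{x_0}M\otimes T_{y_0}N\), for the claimed identity \(\bigcup_n Tu_\ell^{-1}(U''_n)=u_\ell^{-1}(U)\) to hold.) More fundamentally, the \(u_\ell^{-1}(U)\)-localized condition constrains regions where \(\abs{Tu_\ell}\) is arbitrarily large, about which the hypothesis on \((T^2u_\ell)\) near any single point of \(T^*M\otimes TN\) gives no uniform information, so no such soft argument can yield it; the lemma only asserts the equivalence with both conditions localized as in the paper's proof.
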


\begin{proof}
For every \(y\in T^*M \otimes TN\), there exist an open set \(U \subseteq T^*M \otimes TN\) such that \(y\in U\) and a map \(\varphi \in C^1_c(T^*M\otimes TN,\R^q)\) such that \(\varphi_{\arrowvert U} \colon U \subseteq T^*M\otimes TN \to \R^q\) is a local chart~\cite{cvs}*{lemma 1.6} (see also lemma~\ref{lemmeExtended} above). 
Hence, for every local chart \(\psi \colon V \subseteq M \to \R^m\), every \(\ell \in \N\) and almost everywhere on \(\psi(V \cap (T u_\ell)^{-1}(U))\), by the relation between \(T^2u_\ell\) and \(D_K(T u_\ell)\) (proposition~\ref{covariantTTU}),
\[
T\varphi \circ T^2 u_\ell \circ T(\psi^{-1}) 
= T\varphi \circ \operatorname{Vert} \circ\,D_K (T u_\ell) \circ T(\psi^{-1}) 
+ T\varphi \circ \operatorname{Hor}_K \circ \,T u_\ell \circ T(\psi^{-1}). 
\]
Since \(\supp(\varphi)\) is compact, the second term is uniformly bounded for \(\ell\in \N\), and so the conclusion follows using this equality and one or an other assumption. 
\end{proof}

Assuming that there exists a subsequence that converges almost everywhere, it is important to have some closure property in the particular case of bounded sequences in Sobolev spaces. 

\begin{propo}[Weak closure property for second-order Sobolev spaces]\label{closurePropertySobolev}
Let \(p \in [1,+\infty)\). 
Let \((u_\ell)_{\ell \in \N}\) be a sequence of twice colocally weakly differentiable maps from \(M\) to \(N\). 
Assume that the sequence \((Tu_\ell)_{\ell\in \N}\) converges locally in measure to a measurable map \(\upsilon \colon M \to T^*M\otimes TN\) that covers a map \(u \colon M \to N\), that
\[
\limsup_{\ell \to \infty} \int_M \left(\abs{Tu_\ell}_{g^*_M\otimes g_N}^p + \abs{D_K (Tu_\ell)}_{g^*_M \otimes g^*_M\otimes g_N}^p \right) < +\infty,
\]
and, if \(p=1\), that the sequences \((Tu_\ell)_{\ell \in \N}\) and \((D_K (Tu_\ell))_{\ell \in \N}\) are bilocally uniformly integrable. 
Then \(u\in \dot{W}^{2,p}(M,N)\) and
\[
\int_M \abs{D_K (Tu)}_{g^*_M\otimes g^*_M \otimes g_N}^p \le \liminf_{\ell \to \infty} \int_M \abs{D_K (Tu_{\ell})}_{g^*_M\otimes g^*_M \otimes g_N}^p. 
\]
\end{propo}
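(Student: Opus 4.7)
The plan is to split the argument into two parts: first I would establish that \(u\) is twice colocally weakly differentiable so that \(D_K(Tu)\) is well-defined, and then I would identify a weak \(L^p\)-limit of a suitable subsequence of \((D_K(Tu_\ell))_{\ell \in \N}\) with \(D_K(Tu)\), in order to invoke the standard lower semicontinuity of the \(L^p\)-norm under weak convergence.

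For the first part I would invoke proposition~\ref{closureProperty}. The local convergence in measure of \((Tu_\ell)_{\ell\in \N}\) is part of the hypothesis, so only the bilocal uniform integrability of \((Tu_\ell)_{\ell\in \N}\) and \((D_K(Tu_\ell))_{\ell\in \N}\) remains to be verified. When \(p > 1\), the uniform \(L^p\) bound yields this equi-integrability on any local chart through H\"older's inequality applied to sets of small Lebesgue measure, while for \(p = 1\) it is part of the assumptions. Proposition~\ref{closureProperty} then gives that \(u\) is twice colocally weakly differentiable and \(\upsilon = Tu\) almost everywhere in \(M\), so by proposition~\ref{covariantTTU} the colocal weak covariant derivative \(D_K(Tu)\) is well-defined.

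For the second part I would extract a subsequence \((u_{\ell_k})_{k\in \N}\) along which \(\int_M \abs{D_K(Tu_{\ell})}^p_{g^*_M \otimes g^*_M \otimes g_N}\) converges to its \(\liminf\). A further extraction yields weak \(L^p_{\mathrm{loc}}\)-convergence of \(D_K(Tu_{\ell_k})\) to some \(w\) (by reflexivity of \(L^p\) when \(p > 1\), and by the Dunford--Pettis theorem together with the bilocal uniform integrability when \(p = 1\)). To identify \(w\) with \(D_K(Tu)\) almost everywhere, I would fix \(f \in C^1_c(T^*M \otimes TN, \R)\) and pass to the distributional limit in the identity from definition~\ref{weakSecondCovariant},
\[
(Tf)(Tu_{\ell_k})[\operatorname{Vert} \circ D_K(Tu_{\ell_k})] = T(f \circ Tu_{\ell_k}) - (Tf)(Tu_{\ell_k})[\operatorname{Hor}_K \circ Tu_{\ell_k}].
\]
Since \(Tu_{\ell_k} \to Tu\) locally in measure, and since \(Tf\) vanishes outside the compact support of \(f\), dominated convergence yields strong \(L^q_{\mathrm{loc}}\) convergence of the coefficient \((Tf)(Tu_{\ell_k})\) and of \((Tf)(Tu_{\ell_k})[\operatorname{Hor}_K \circ Tu_{\ell_k}]\) for every \(q<+\infty\), as well as local \(L^1\) convergence of \(f \circ Tu_{\ell_k}\), which entails distributional convergence of \(T(f \circ Tu_{\ell_k})\). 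Pairing these strong convergences with the weak convergence of \(D_K(Tu_{\ell_k})\) on the left-hand side, the limit of the identity is precisely the defining equation of \(D_K(Tu)\) with \(w\) in place of \(D_K(Tu)\); the uniqueness of the colocal weak covariant derivative then forces \(w = D_K(Tu)\) almost everywhere in \(M\).

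The weak lower semicontinuity of the \(L^p\)-norm then gives
\[
\int_M \abs{D_K(Tu)}^p_{g^*_M \otimes g^*_M \otimes g_N} \le \liminf_{k \to \infty} \int_M \abs{D_K(Tu_{\ell_k})}^p_{g^*_M \otimes g^*_M \otimes g_N} = \liminf_{\ell \to \infty} \int_M \abs{D_K(Tu_{\ell})}^p_{g^*_M \otimes g^*_M \otimes g_N},
\]
which establishes both that \(u \in \dot{W}^{2,p}(M,N)\) and the desired inequality. The main obstacle I expect is the identification of the weak limit \(w\) with \(D_K(Tu)\): one has to pass to the limit in a bilinear expression whose factors converge in incompatible modes (in measure versus weakly in \(L^p\)), and making the strong convergence of the coefficient \((Tf)(Tu_{\ell_k})\) rigorous depends on the compact support of \(f\) and, for \(p = 1\), on the bilocal uniform integrability.
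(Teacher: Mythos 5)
Your first step (proposition~\ref{closureProperty}, with bilocal uniform integrability deduced from the uniform \(L^p\) bound when \(p>1\) and assumed when \(p=1\)) is essentially the paper's; note only that the identification \(\upsilon=Tu\) and \(u\in\dot{W}^{1,p}(M,N)\) comes from the first-order weak closure property of \cite{cvs}*{proposition 3.8} rather than from proposition~\ref{closureProperty} itself. The genuine gap is in your second step: you extract "a weak \(L^p_{\mathrm{loc}}\) limit of \(D_K(Tu_{\ell_k})\) by reflexivity (or Dunford--Pettis)". But the maps \(D_K(Tu_\ell)\) do not form a sequence in any fixed Banach space: at a point \(x\), \(D_K(Tu_\ell)(x)\) lies in \(T^*_xM\otimes T^*_xM\otimes T_{u_\ell(x)}N\), a fiber that moves with \(\ell\), so weak sequential compactness in \(L^p\) cannot be invoked as stated. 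To make sense of a weak limit you would first have to linearize, e.g.\ push everything forward through an isometric embedding of \(N\) into \(\R^\nu\) or work in local trivializations while keeping track of the \(\ell\)-dependent sets \(u_\ell^{-1}(U)\); and then the weak limit \(w\) is a priori only ambient-valued and need not take values in the bundle over the graph of \(u\), so the appeal to uniqueness of the colocal weak covariant derivative does not apply verbatim --- your limit identity only pins down the component of \(w\) tangential to \(N\) at \(u(x)\) (which would still yield \(\abs{D_K(Tu)}_{g^*_M\otimes g^*_M\otimes g_N}\le\abs{w}\) and hence the inequality, but that is an argument you have not supplied). The same objection applies to the "coefficient" \((Tf)(Tu_{\ell_k})\), which also lives in \(\ell\)-dependent spaces; moreover for \(p=1\) the dual exponent is \(\infty\), where almost everywhere convergence of bounded functions does not give strong convergence, so the pairing needs an Egorov/uniform-integrability argument rather than plain dominated convergence.

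The paper is structured precisely to avoid taking a weak limit of the covariant derivatives themselves: it takes a weak \(L^p_{\mathrm{loc}}\) limit \(w\) of the scalar functions \(\abs{D_K(Tu_\ell)}_{g^*_M\otimes g^*_M\otimes g_N}\), which do live in a fixed space; it uses lemma~\ref{absTu} and the classical Rellich--Kondrashov theorem to get \(\abs{Tu_\ell}_{g^*_M\otimes g_N}\to\abs{Tu}_{g^*_M\otimes g_N}\) in \(L^p_{\mathrm{loc}}(M)\); it passes to the limit in the bound \(\abs{T(f\circ Tu_\ell)}\le\abs{f}_{\Lip}\bigl(\abs{Tu_\ell}^2_{g^*_M\otimes g_N}+\abs{D_K(Tu_\ell)}^2_{g^*_M\otimes g^*_M\otimes g_N}\bigr)^{1/2}\) for \(f\in C^1_c(T^*M\otimes TN,\R^q)\); and it then invokes the characterization of the norm of the derivative of \cite{cvs}*{proposition 2.2} to conclude \(\abs{D_K(Tu)}_{g^*_M\otimes g^*_M\otimes g_N}\le w\) almost everywhere, finishing with lower semicontinuity under weak convergence on compact sets and monotone convergence. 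If you wish to keep your identification strategy, you must either carry it out after an isometric embedding and deal explicitly with the normal component of the weak limit, or replace it by this scalar route.
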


For \(p \in (1,2)\), P. Hornung and R. Moser have a similar result~\cite{hm}*{lemma 3.2} for their notion of second-order Sobolev spaces~\cite{hm}*{definition 3.1}.

We first prove the following lemma.
\begin{lemme}\label{absTu}
Let \(p \in [1,+\infty)\). 
If \(u \in \dot{W}^{1,p}(M,N) \cap \dot{W}^{2,p}(M,N)\), then the map \(\abs{Tu}_{g^*_M \otimes g_N}\) belongs to \(W^{1,p}(M)\) and almost everywhere in \(M\)
\[
\bigabs{\,T\abs{Tu}_{g^*_M\otimes g_N}}_{g^*_M \otimes g_1} \le \abs{D_K(Tu)}_{g^*_M \otimes g^*_M \otimes g_N},
\]
where \(g_1\) is the Euclidean metric on \(\R\). 
\end{lemme}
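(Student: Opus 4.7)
The plan is to prove this Kato-type inequality by smooth approximation of the absolute value, exploiting the metric character of the connection to kill the horizontal part of the derivative of the regularizer. For every $\varepsilon > 0$, consider the smooth function $\phi_\varepsilon \colon T^*M \otimes TN \to \R$ defined by $\phi_\varepsilon(\xi) = \sqrt{\varepsilon^2 + \abs{\xi}^2_{g^*_M \otimes g_N}}$, so that $\phi_\varepsilon \to \abs{\cdot}$ monotonically as $\varepsilon \to 0$.

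First I would show that $\phi_\varepsilon \circ Tu$ is weakly differentiable. Since $\phi_\varepsilon$ is smooth and globally Lipschitz with Lipschitz constant $1$, and since $Tu$ is colocally weakly differentiable as a map into $T^*M \otimes TN$, one applies the chain rule of \cite{cvs}*{proposition 2.6} (or equivalently localises with a cutoff and passes to the limit on compactly supported approximations). Using proposition~\ref{covariantTTU} to decompose $T^2u = \operatorname{Vert} \circ D_K(Tu) + \operatorname{Hor}_K \circ Tu$, the chain rule gives
\[
T(\phi_\varepsilon \circ Tu) = (T\phi_\varepsilon)(Tu)\bigl[\operatorname{Vert} \circ D_K(Tu)\bigr] + (T\phi_\varepsilon)(Tu)\bigl[\operatorname{Hor}_K \circ Tu\bigr].
\]
The vertical contribution evaluated on a tangent vector $e \in T_x M$ equals $\langle Tu, D_K(Tu)[e]\rangle_{g^*_M \otimes g_N} / \sqrt{\varepsilon^2 + \abs{Tu}^2}$. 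The horizontal contribution vanishes: since the connection $K_{T^*M \otimes TN}$ is metric (built from the Levi--Civita connections on $TM$ and $TN$), parallel transport preserves $\abs{\cdot}_{g^*_M \otimes g_N}$, hence preserves $\phi_\varepsilon$, so the derivative of $\phi_\varepsilon$ along any horizontal vector is zero. The Cauchy--Schwarz inequality then yields the pointwise bound
\[
\bigabs{T(\phi_\varepsilon \circ Tu)}_{g^*_M \otimes g_1} \le \frac{\abs{Tu}_{g^*_M \otimes g_N}}{\sqrt{\varepsilon^2 + \abs{Tu}^2_{g^*_M \otimes g_N}}}\, \abs{D_K(Tu)}_{g^*_M \otimes g^*_M \otimes g_N} \le \abs{D_K(Tu)}_{g^*_M \otimes g^*_M \otimes g_N}.
\]

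Finally I would pass to the limit $\varepsilon \to 0$. Pointwise $\phi_\varepsilon \circ Tu \to \abs{Tu}_{g^*_M \otimes g_N}$ with $\phi_\varepsilon \circ Tu \le \abs{Tu}_{g^*_M \otimes g_N} + \varepsilon$, so by dominated convergence using $\abs{Tu}_{g^*_M \otimes g_N} \in L^p(M)$, the convergence also holds in $L^p_{\mathrm{loc}}(M)$. The derivatives $T(\phi_\varepsilon \circ Tu)$ are dominated in norm by the fixed $L^p$ function $\abs{D_K(Tu)}_{g^*_M \otimes g^*_M \otimes g_N}$; for $p > 1$ I extract a weakly convergent subsequence in $L^p$, while for $p = 1$ I apply the Dunford--Pettis theorem using uniform integrability from the domination. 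The weak limit is necessarily $T\abs{Tu}_{g^*_M \otimes g_N}$ by uniqueness of distributional derivatives, which establishes $\abs{Tu}_{g^*_M \otimes g_N} \in W^{1,p}(M)$. The almost-everywhere pointwise inequality then follows either by lower semi-continuity (applied to the $L^p$ norm over arbitrary measurable subsets) or by selecting a subsequence along which $T(\phi_\varepsilon \circ Tu)$ converges a.e., using that on $\{Tu \neq 0\}$ the derivatives converge pointwise to $\langle Tu, D_K(Tu)\rangle/\abs{Tu}$, and that $T\abs{Tu}_{g^*_M \otimes g_N}$ vanishes a.e.\ on $\{Tu = 0\}$ by the standard property of Sobolev functions.

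The main obstacle is the rigorous justification that the horizontal derivative of $\phi_\varepsilon$ vanishes, which is where the metric compatibility of the Levi--Civita-induced connection on $T^*M \otimes TN$ is used crucially; a secondary subtlety is the $p=1$ case, where reflexivity is unavailable and one must invoke uniform integrability via the pointwise bound to extract a weak limit.
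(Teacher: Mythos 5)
Your proof is correct and is essentially the paper's argument: the paper likewise treats \(Tu\) as a Sobolev map into \(T^*M\otimes TN\) endowed with the Sasaki metric, applies the chain rule of \cite{cvs}*{proposition 2.6} directly to the Lipschitz function \(\abs{\cdot}_{g^*_M\otimes g_N}\) instead of to your regularization \(\phi_\varepsilon\), and obtains the pointwise bound from the decomposition of proposition~\ref{covariantTTU} together with the metric character of \(K_{T^*M\otimes TN}\) (via \cite{cvs2}*{proposition 5.2}), so your smoothing and the limit \(\varepsilon\to 0\) merely replace that last citation. One small precision: the chain rule is applicable because \(Tu \in \dot{W}^{1,p}(M,T^*M\otimes TN)\), which follows from the hypotheses through the identity \(\abs{T^2u}^2_{g^*_M\otimes G^S_{T^*M\otimes TN}} = \abs{Tu}^2_{g^*_M\otimes g_N} + \abs{D_K(Tu)}^2_{g^*_M\otimes g^*_M\otimes g_N}\) (as in the proof of proposition~\ref{HornungMoserLink}), not from the colocal weak differentiability of \(Tu\) alone.
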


\begin{proof}
Since \(Tu \in \dot{W}^{1,p}(M,T^*M\otimes TN)\) (see proof of proposition~\ref{HornungMoserLink} above) and since the map \(\abs{\cdot}_{g^*_M \otimes g_N}\) is Lipschitz-continuous with respect to the distance induced by the Sasaki metric \(G^S_{T^*M\otimes TN}\), by the chain rule in Sobolev spaces between manifolds~\cite{cvs}*{proposition 2.6}, \(\abs{Tu}_{g^*_M\otimes g_N} \in W^{1,p}(M)\). 
Since the connection \(K_{T^*M\otimes TN}\) is metric, by the chain rule formula in Sobolev spaces~\cite{cvs2}*{proposition 5.2} and by the relation between \(T^2 u\) and \(D_K(Tu)\) (proposition~\ref{covariantTTU}), we have the desired inequality. 
\end{proof}

\begin{proof}[Proof of proposition~\ref{closurePropertySobolev}]
By weak closure property for Sobolev maps~\cite{cvs}*{proposition 3.8}, \(u \in \dot{W}^{1,p}(M,N)\) and \(Tu = \upsilon\). 
Moreover, by closure property in terms of the colocal weak covariant derivative (proposition~\ref{closureProperty}), the map \(u\) is twice colocally weakly differentiable. 
We now need to prove that \(\abs{D_K (Tu)}_{g^*_M\otimes g^*_M \otimes g_N} \in L^p(M)\). 

By previous lemma~\ref{absTu}, the sequence \((\abs{Tu_\ell}_{g^*_M\otimes g_N})_{\ell \in \N}\) is bounded in \(W^{1,p}(M)\). 
By the classical Rellich--Kondrashov compactness theorem~\citelist{\cite{aubin}*{theorem 2.34 (a)}\cite{brezis}*{theorem 9.16}}, and since \((\abs{Tu_{\ell}}_{g^*_M \otimes g_N})_{\ell \in \N}\) converges to \(\abs{Tu}_{g^*_M \otimes g_N}\) in measure, the sequence \((\abs{Tu_{\ell}}_{g^*_M \otimes g_N})_{\ell \in \N}\) converges to \(\abs{Tu}_{g^*_M \otimes g_N}\) in \(L^p_\mathrm{loc}(M)\). 
Up to a subsequence, the sequence \((\abs{D_K (Tu_{\ell})}_{g^*_M \otimes g^*_M\otimes g_N})_{\ell\in \N}\) converges weakly to some \(w\) in \(L^p_\mathrm{loc}(M)\). 
Since the sequence \((Tu_{\ell})_{\ell\in \N}\) converges to \(Tu\) locally in measure, for every \(f\in C^1_c(T^*M\otimes TN,\R^q)\), 
with \(q \ge \min(m, \mathrm{dim}(T^*M\otimes TN))\), 
\[
\abs{T(f\circ\,Tu)}_{g^*_M\otimes g_q}^2 \le \abs{f}_{\mathrm{Lip}}^2 (w^2 + \abs{Tu}^2_{g^*_M\otimes g_N}),
\]
where \(g_q\) is the Euclidean metric on \(\R^q\), almost everywhere in \(M\). 
By the characterization of the norm of the derivative~\cite{cvs}*{proposition 2.2}, 
\[
\abs{Tu}_{g^*_M\otimes g_N}^2 + \abs{D_K (Tu)}_{g^*_M\otimes g^*_M \otimes g_N}^2 \le w^2 + \abs{Tu}^2_{g^*_M\otimes g_N}
\]
almost everywhere in \(M\) and so \(\abs{D_K (Tu)}_{g^*_M\otimes g^*_M \otimes g_N}\)\(\le w\).
Consequently, by lower semi-continuity of the norm under weak convergence, for each compact set \(Q \subseteq M\), 
\[
\int_Q \abs{D_K (Tu)}_{g^*_M\otimes g^*_M \otimes g_N}^p \le \int_Q w^p \le \liminf_{\ell\to \infty} \int_Q \abs{D_K (Tu_{\ell})}_{g^*_M\otimes g^*_M \otimes g_N}^p. 
\]
Finally, since \(M\) has a countable basis, there exists a set \(\{Q_i \colon i \in \N\}\) of compact sets such that for every \(i\in \N\), \(Q_i \subseteq Q_{i+1}\) and \(M = \bigcup_{i\in \N} Q_i\)~\cite{lee}*{proposition 4.76} 
and so by the monotone convergence theorem~\cite{bogachev}*{theorem 2.8.2}, we have the desired inequality. 
\end{proof}

\subsection{Higher order weak covariant derivatives and Sobolev spaces}
First, we assume that \(M\) and \(N\) are affine manifolds. 
In order to simplify the notations, we denote indifferently the connections \(K\), the maps \(\operatorname{Vert}\) and \(\operatorname{Hor}_K\) related to the different bundles (section~\ref{sectionPreliminaries}). 

If \(k=2\) and \(u\) is twice colocally weakly differentiable, then \(D^1_K u = Tu\) and \(D^2_K u = D_K (Tu)\). As for the notion of twice colocally weakly differentiable maps, there is a relation between weak higher-order covariant derivative and higher-vector bundle morphism. 

\begin{propo}\label{covariantTkU}
Let \(u \colon M \to N\) be an \(k\) times colocally weakly differentiable map. 
Then the colocally weakly differentiable map \(D_K^{k - 1} u \colon M \to (\otimes^{k-1} T^*M) \otimes TN\) has a colocal weak covariant derivative \(D_K^k u \colon M \to T^*M \otimes (\otimes^{k-1} T^*M) \otimes TN\) 
if and only if the map \(D_K^{k - 1} u \colon M \to (\otimes^{k-1} T^*M) \otimes TN\) has a colocal weak derivative \(T D_K^{k - 1} u \colon TM \to T((\otimes^{k-1} T^*M) \otimes TN)\), and almost everywhere in \(M\)
\[
 T D_K^{k - 1} u = \operatorname{Vert} \circ\,D_K^{k} u + \operatorname{Hor}_K \circ\,D_K^{k-1} u,
\]
and 
\[
D_K^k u = K \circ T D_K^{k-1} u. 
\]
Moreover, for every \(j \in \N_*\),  the map \(D_K^{k} u\) is \(j\) times colocally weakly differentiable if and only if 
the map \(D_K^{k-1} u\) is \((j + 1)\) times colocally weakly differentiable, and almost everywhere in \(M\) 
\begin{equation}
\label{eqMasterHigherOrder}
T^{j + 1} D_K^{k - 1} u 
= (T^j\operatorname{Vert}) \circ (T^j D_K^{k} u) + (T^j \operatorname{Hor}_K) \circ (T^j D_K^{k-1} u), 
\end{equation}
and 
\begin{equation}\label{eqMasterHigherOrderBis}
T^j D_K^k u = (T^j K) \circ (T^{j+1} D_K^{k-1} u). 
\end{equation}
\end{propo}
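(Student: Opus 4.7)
The first part reproduces the argument of proposition~\ref{covariantTTU} verbatim, with the vector bundle \(T^*M\otimes TN\) replaced by \(E = (\otimes^{k-1} T^*M) \otimes TN\) endowed with its induced connection \(K_E\) (built from \(K\) fibrewise as in section~\ref{sectionPreliminaries}), and with the section \(Tu\) replaced by \(D_K^{k-1} u \colon M \to E\). Applying the decomposition \(\operatorname{id}_{TE} = \operatorname{Vert} \circ K_E + \operatorname{Hor}_K \circ T\pi_M\) from~\eqref{IdonTE} to \(T D_K^{k-1} u\) and recognising \(K_E \circ T D_K^{k-1} u = D_K(D_K^{k-1} u) = D_K^k u\) yields \(T D_K^{k-1} u = \operatorname{Vert} \circ D_K^k u + \operatorname{Hor}_K \circ D_K^{k-1} u\); conversely, composing on the left with \(K_E\) and using \(K_E \circ \operatorname{Hor}_K = 0\) gives \(D_K^k u = K \circ T D_K^{k-1} u\). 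The chain rule \(T(f \circ D_K^{k-1} u) = Tf \circ T D_K^{k-1} u\) tested against \(f \in C^1_c(E, \R)\) closes the equivalence of existences. The colocal weak differentiability of \(D_K^{k-1} u\) implicit in the hypothesis is established along the way by a secondary induction on \(k\).

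For the second part I would argue by induction on \(j \in \N_*\). Both directions amount to applying \(T\) iteratively to the first-part identities and invoking the chain rule for colocally weakly differentiable maps~\cite{cvs}*{proposition 2.6} together with smoothness of \(K\), \(\operatorname{Vert}\) and \(\operatorname{Hor}_K\). For the base case \(j = 1\), applying \(T\) to the two first-part formulas immediately yields~\eqref{eqMasterHigherOrder} and~\eqref{eqMasterHigherOrderBis} at \(j=1\); the equivalence ``\(D_K^k u\) is once colocally weakly differentiable \(\iff\) \(D_K^{k-1} u\) is twice colocally weakly differentiable'' follows since composition of a once colocally weakly differentiable map with a smooth bundle morphism is once colocally weakly differentiable, and conversely the formula \(D_K^k u = K \circ T D_K^{k-1} u\) transfers differentiability back. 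For the inductive step from \(j\) to \(j + 1\), apply \(T\) to~\eqref{eqMasterHigherOrder} and~\eqref{eqMasterHigherOrderBis} at level \(j\) and use the single-step chain rule \(T(\varphi \circ \upsilon) = T\varphi \circ T\upsilon\) with the smooth maps \(T^j \operatorname{Vert}\), \(T^j \operatorname{Hor}_K\) and \(T^j K\) to obtain the formulas at level \(j+1\); the equivalence again propagates because smooth pre- and post-composition preserves \(j\)-fold colocal weak differentiability.

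Analytically nothing new is required: the proof is a pure propagation of the second-order equivalence (proposition~\ref{covariantTTU}) through smooth compositions. The main obstacle I anticipate is notational bookkeeping, namely verifying at each iteration that \(T^j \operatorname{Vert}\), \(T^j \operatorname{Hor}_K\) and \(T^j K\) are morphisms for the correct \((j+1)\)-tuple vector bundle structures on the iterated tangent bundles in which \(T^{j+1} D_K^{k-1} u\) and \(T^j D_K^k u\) live, so that the sums and compositions in~\eqref{eqMasterHigherOrder}--\eqref{eqMasterHigherOrderBis} are well-typed. The canonical flips on iterated tangent bundles provide the identifications needed; once the types match, each equality reduces to the decomposition~\eqref{IdonTE} applied on the appropriate bundle, and the induction closes.
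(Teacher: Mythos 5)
The first half of your proposal is indeed what the paper intends: proposition~\ref{covariantTkU} is stated without proof, and its first assertion is obtained exactly as you describe, by running the proof of proposition~\ref{covariantTTU} verbatim on the bundle \((\otimes^{k-1}T^*M)\otimes TN\) with its induced connection, using the decomposition~\eqref{IdonTE} and the defining test-function identity of the colocal weak covariant derivative. Up to that point your argument matches the only model the paper provides.

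The ``Moreover'' part, however, contains a genuine gap. Your induction rests on the principle that pre- and post-composition with the smooth maps \(K\), \(\operatorname{Vert}\), \(\operatorname{Hor}_K\) (and fibrewise sums) preserves \(j\)-fold colocal weak differentiability, and you invoke \cite{cvs}*{proposition 2.6} for this. That result is the chain rule in Sobolev spaces, for Lipschitz functions composed with maps whose derivative is locally integrable; it says nothing about bare colocal weak differentiability. The principle itself is not valid in this framework: colocal weak differentiability of a map \(v\) only yields weak differentiability of \(f\circ v\) for \(f\in C^1_c\), and for a fibrewise-linear, non-proper map such as \(K\colon TE\to E\) (whose kernel contains the whole horizontal subbundle) the pull-back \(g\circ K\) of a function \(g\in C^1_c\) is never compactly supported, so the hypothesis gives you nothing to test against; likewise colocally weakly differentiable maps are not stable under fibrewise addition. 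This is precisely the difficulty isolated in section~\ref{sectionChainRule}: composition with maps that are merely smooth and linear with respect to the derivative variable requires an additional integrability condition (example~\ref{exampleNoChainRule}, proposition~\ref{equivalentChainRule}), so ``smooth pre- and post-composition preserves \(j\)-fold colocal weak differentiability'' cannot be taken for granted. Concretely, the delicate implication is extracting the differentiability of the vertical component \(D_K^k u = K\circ T D_K^{k-1}u\) from that of \(T D_K^{k-1}u\): a test function on \((\otimes^{k}T^*M)\otimes TN\) can be cut off in \(x\), in \(u(x)\) and in \(D_K^k u\), but not in the intermediate derivative \(D_K^{k-1}u\) or in the horizontal component, so regions where the intermediate derivative is large are invisible to the tests that certify differentiability of \(T D_K^{k-1}u\) while remaining visible to those for \(D_K^k u\); no soft composition argument bridges this, and the same issue already affects your remark that the colocal weak differentiability of \(D_K^{k-1}u\) for \(k\ge 3\) ``is established along the way''. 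A correct treatment must work with the defining test-function identities and the specific structure of these bundle maps (cut-offs in \(u\), the covariant splitting, and where needed integrability hypotheses), rather than the generic chain-rule principle your induction relies on.
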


In a second step, we assume that \((M,g_M)\) and \((N,g_N)\) are Riemannian manifolds with Levi--Civita connection maps respectively. The metrics on vectors of \(TM\) and \(TN\) induce a metric \((\otimes^k g^*_M) \otimes g_N\) on \((\otimes^k T^*M) \otimes TN\). This metric can be computed for every \(k\)-linear map \(\xi \colon \times^k T_x M \to T_y N\) by 
\[
\bigl((\otimes^k g^*_M) \otimes g_N\bigr)(\xi,\xi) = \sum_{1 \le i_1,\dots, i_k \le m} g_N \bigl(\xi(e_{i_1},\dots, e_{i_k}), \xi(e_{i_1},\dots, e_{i_k})\bigr),
\]
where \((e_i)_{1\le i \le m}\) is an orthonormal basis in \(\pi^{-1}_M(\{x\})\) with respect to the Riemannian metric \(g_M\). 

We are now able to define higher-order Sobolev spaces. 
\begin{de}\label{defWkp}
Let \(p \in [1,+\infty)\). 
A map \(u \colon M \to N\) belongs to the \emph{\(k^{\text{th}}\) order Sobolev space} \(\dot{W}^{k,p}(M,N)\) whenever \(u\) is \(k\) times colocally weakly differentiable and \(\abs{D^k_K u}_{(\otimes^k g^*_M) \otimes g_N} \in L^p(M)\). 
\end{de}

First, we have a Rellich--Kondrashov type compactness theorem. 
\begin{propo}
Let \((u_\ell)_{\ell \in \N}\) be a sequence of \(k\) times colocally weakly differentiable maps from \(M\) to \(N\) and let \(v \in \bigcap_{j=1}^{k-1} \dot{W}^{j,p}(M,N)\). 
If \((N,d)\) is complete, if there exists \(p \in [1,+\infty)\) such that 
\[
  \sup_{\ell \in \N} \int_{M} d (u_\ell, v)^p + \sum_{j=1}^k \abs{D^j_K u_\ell}_{(\otimes^j g^*_M) \otimes g_N}^p < +\infty,
\]
then there is a subsequence \((T^{k-1} u_{\ell_i})_{i\in \N}\) that converges to a measurable map \(\upsilon \colon M \to \Mor(T^{k-1} M,T^{k-1} N)\) almost everywhere in \(M\). 
\end{propo}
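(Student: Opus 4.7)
The plan is to proceed by induction on $k$, with the base case $k=2$ provided directly by proposition~\ref{RellichSobolev}. For the inductive step, I assume the statement holds with $k$ replaced by $k-1$ and I apply it to the sequence $(Tu_\ell)_{\ell \in \N}$ viewed as $(k-1)$ times colocally weakly differentiable maps from $M$ into the Riemannian manifold $T^*M\otimes TN$ endowed with the Sasaki metric $G^S_{T^*M\otimes TN}$ and the induced connection, taking $Tv$ as the reference map.

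Three things must be checked in order to legitimately invoke the inductive hypothesis. First, the target $(T^*M\otimes TN, d^S)$ is complete: this follows from completeness of $(N,d)$ together with Euclidean completeness of the fibers, exactly as at the start of the proof of proposition~\ref{RellichSobolev}. Second, the reference map $Tv$ lies in $\bigcap_{j=1}^{k-2} \dot{W}^{j,p}(M,T^*M\otimes TN)$; this follows from $v \in \bigcap_{j=1}^{k-1}\dot{W}^{j,p}(M,N)$ once $D^j_K(Tv)$ is related to $D^{j+1}_K v$ via proposition~\ref{covariantTkU}. Third, I need the uniform bound
\[
\sup_{\ell \in \N} \int_M d^S(Tu_\ell, Tv)^p + \sum_{j=1}^{k-1} \bigabs{D^j_K(Tu_\ell)}^p < +\infty.
\]
The distance term is estimated as in the proof of proposition~\ref{RellichSobolev} by $d(u_\ell, v)^p + \abs{Tu_\ell}^p + \abs{Tv}^p$, all of which are uniformly $L^p$. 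Each summand $\abs{D^j_K(Tu_\ell)}^p$ is controlled iteratively by means of the vertical/horizontal decomposition of equation~\eqref{eqMasterHigherOrder} from proposition~\ref{covariantTkU}, which dominates it by $\abs{D^{j+1}_K u_\ell}^p$ plus a contribution coming from lower-order covariant derivatives of $u_\ell$ through the (fiber-wise isometric) horizontal lift.

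The inductive hypothesis then delivers a subsequence $(T^{k-2}(Tu_{\ell_i}))_{i\in\N}$ converging almost everywhere to a measurable map on $M$. Under the canonical identification of iterated tangent bundles, $T^{k-2}(Tu) = T^{k-1} u$ as a section of $\Mor(T^{k-1}M,T^{k-1}N)$ covering $u$, yielding the claimed almost everywhere convergent subsequence. The main technical obstacle I expect is the third verification: explicitly propagating the vertical/horizontal decomposition of proposition~\ref{covariantTkU} through up to $k-2$ iterations and identifying the resulting horizontal remainders so that every $\abs{D^j_K(Tu_\ell)}$ is uniformly controlled by the finite collection $(\abs{D^i_K u_\ell})_{i=1}^{k}$; once this bookkeeping is done, everything else reduces cleanly to the inductive hypothesis.
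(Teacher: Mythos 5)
Your overall mechanism (convert covariant bounds into tangent-map bounds via the vertical/horizontal splitting and feed the result into the first-order Rellich--Kondrashov property) is the same one the paper uses, but your organization as an induction on \(k\) creates a genuine gap that the paper's one-shot argument avoids. The paper applies \cite{cvs}*{proposition 3.4} a single time, to the sequence \((T^{k-1}u_\ell)_{\ell\in\N}\), and uses the formulas \eqref{eqMasterHigherOrder}--\eqref{eqMasterHigherOrderBis} only to transfer the hypothesis \(\sup_\ell\int_M d(u_\ell,v)^p+\sum_{j\le k}\abs{D^j_K u_\ell}^p<\infty\) into the single first-order bound that proposition requires; at that level only the case \(j=0\) of \eqref{eqMasterHigherOrder} is needed, where \(\abs{T D^{j}_K u_\ell}\) is controlled \emph{additively} by \(\abs{Tu_\ell}\) and \(\abs{D^{j+1}_K u_\ell}\) (as in the identity \(\abs{T^2u_\ell}^2_{g^*_M\otimes G^S}=\abs{Tu_\ell}^2+\abs{D_K(Tu_\ell)}^2\) used for \(k=2\)). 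At no point does the paper need covariant derivatives of order \(\ge 2\) of an intermediate map into a curved target.

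Your inductive step does need exactly that, and this is where the argument breaks. To invoke the statement at level \(k-1\) for the maps \(Tu_\ell\colon M\to(T^*M\otimes TN,G^S_{T^*M\otimes TN})\) you must bound \(\sum_{j=1}^{k-1}\int_M\abs{D^j_K(Tu_\ell)}^p\), where \(D^j_K(Tu_\ell)\) are the \emph{intrinsic} covariant derivatives of the map \(Tu_\ell\) with respect to the geometry of the Sasaki manifold. For \(j=1\) this is fine, but already for \(j=2\) (hence already for \(k=3\)) these do not coincide with \(D^{j+1}_K u_\ell\): the discrepancy involves the derivative of the horizontal lift and the curvature of the Sasaki metric, which are linear in the fiber variable, so it produces terms of the type \(\abs{Tu_\ell}\,\abs{D^2_K u_\ell}\), \(\abs{Tu_\ell}^2\abs{D^2_Ku_\ell}\), \(\abs{Tu_\ell}^3\), i.e.\ \emph{products} of lower-order quantities rather than the additive ``contribution coming from lower-order covariant derivatives'' you invoke. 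The hypotheses only give \(L^p\) bounds on each factor separately (all with the same exponent \(p\)), which does not bound such products in \(L^p\); and no Gagliardo--Nirenberg-type interpolation can be called upon, since the paper itself shows it fails intrinsically (propositions~\ref{sobolevNoGN} and~\ref{sobolevNoGN2}). The same defect affects your second verification for \(k\ge4\): \(Tv\in\dot{W}^{j,p}(M,T^*M\otimes TN)\) for \(j\ge2\) is not a consequence of \(v\in\bigcap_{j=1}^{k-1}\dot{W}^{j,p}(M,N)\) via proposition~\ref{covariantTkU}, for the same reason. So the ``bookkeeping'' you defer cannot be closed as stated; to repair the argument you should avoid higher covariant derivatives of \(Tu_\ell\) altogether, e.g.\ by arguing as the paper does (or, equivalently, by applying the first-order compactness result to the tuple \((u_\ell,D_Ku_\ell,\dots,D^{k-1}_Ku_\ell)\) valued in a vector bundle over \(M\times N\) with a Sasaki metric, where only the additive splitting of \(TD^j_Ku_\ell\) enters).
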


By relying recursively on the formulas~\eqref{eqMasterHigherOrder} and~\eqref{eqMasterHigherOrderBis}, the boundedness of \((D_{K}^k u_\ell)_{\ell \in \N}\) is related to the boundedness of the sequence \((T^k u_\ell)_{\ell \in \N}\) and 
thus, the sequence \((T^{k-1} u_\ell)_{\ell \in \N}\) satisfies all the assumptions of the Rellich--Kondrashov compactness property for Sobolev maps~\cite{cvs}*{proposition 3.4} and so the conclusion follows directly. 

We also have a closure property in the particular case of bounded sequences in Sobolev spaces. 
\begin{propo}[Weak closure property for higher-order Sobolev spaces]
Let \(p \in [1,+\infty)\). 
Let \((u_\ell)_{\ell \in \N}\) be a sequence of \(k\) times colocally weakly differentiable maps from \(M\) to \(N\). 
Assume that the sequence \((T^{k-1} u_\ell)_{\ell\in \N}\) converges locally in measure to a measurable map \(\upsilon \colon M \to \Mor(T^{k-1} M,T^{k-1}N)\) that covers a map \(u \colon M \to N\), that
\[
\limsup_{\ell \to \infty} \int_M \sum_{j=1}^k \abs{D^j_K u_\ell}_{(\otimes^j g^*_M) \otimes g_N}^p < +\infty,
\]
and, if \(p=1\), that for every \(j\in \{1,\dots,k\}\), the sequence \((D^j_K u_\ell)_{\ell \in \N}\) is bilocally uniformly integrable. 
Then \(u\in \dot{W}^{k,p}(M,N)\) and
\[
\int_M \abs{D^k_K u}_{(\otimes^k g^*_M) \otimes g_N}^p \le \liminf_{\ell \to \infty} \int_M \abs{D^k_K u_{\ell}}_{(\otimes^k g^*_M) \otimes g_N}^p. 
\]
\end{propo}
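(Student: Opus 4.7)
The plan is to follow the two-step strategy used in the proof of proposition~\ref{closurePropertySobolev}. First I would establish that \(u\) is \(k\) times colocally weakly differentiable; then I would prove the lower semi-continuity of the \(L^p\)-norm of \(D^k_K u\).

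For the first step, I would invoke the closure property for \(k\) times colocally weakly differentiable maps (proposition~\ref{closurePropertyKTimes}), which requires that, for every \(j \in \{1,\dots,k\}\), the sequence \((T^j u_\ell)_{\ell \in \N}\) be bilocally uniformly integrable. Using the formulas~\eqref{eqMasterHigherOrder} and~\eqref{eqMasterHigherOrderBis} recursively together with the smoothness of \(\operatorname{Vert}\), \(\operatorname{Hor}_K\) and \(K\), this is equivalent to the bilocal uniform integrability of \((D^i_K u_\ell)_{\ell \in \N}\) for each \(i \le j\), an inductive generalization of the lemma following proposition~\ref{closureProperty}. For \(p > 1\) the latter follows from the uniform \(L^p\)-bound; for \(p=1\) it is precisely the hypothesis.

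For the second step, the key ingredient is a higher-order analogue of lemma~\ref{absTu}: if \(u \in \bigcap_{j=1}^{k} \dot{W}^{j,p}(M,N)\), then \(\abs{D^{k-1}_K u}_{(\otimes^{k-1} g^*_M) \otimes g_N}\) belongs to \(W^{1,p}(M)\) and satisfies
\[
\bigabs{\,T\abs{D^{k-1}_K u}_{(\otimes^{k-1} g^*_M) \otimes g_N}}_{g^*_M \otimes g_1} \le \abs{D^k_K u}_{(\otimes^k g^*_M) \otimes g_N}
\]
almost everywhere in \(M\). Once one checks via~\eqref{eqMasterHigherOrder} that \(D^{k-1}_K u\) defines a first-order Sobolev section of the relevant tensor bundle with Sobolev norm controlled by \(\abs{D^{k-1}_K u}+\abs{D^k_K u}\), this inequality follows from the chain rule for Sobolev maps between manifolds~\cite{cvs}*{proposition 2.6} applied to the Lipschitz function \(\abs{\cdot}_{(\otimes^{k-1} g^*_M) \otimes g_N}\), together with the chain rule formula~\cite{cvs2}*{proposition 5.2} and the metric character of the connection \(K\).

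Granted this lemma, the remaining argument transposes the proof of proposition~\ref{closurePropertySobolev}: the sequence \((\abs{D^{k-1}_K u_\ell})_{\ell\in\N}\) is bounded in \(W^{1,p}(M)\); classical Rellich--Kondrashov gives \(L^p_\mathrm{loc}\)-convergence to \(\abs{D^{k-1}_K u}\); up to a subsequence, \((\abs{D^k_K u_\ell})_{\ell\in\N}\) converges weakly in \(L^p_\mathrm{loc}(M)\) to some \(w \ge 0\); testing with functions \(f \in C^1_c((\otimes^{k-1} T^*M) \otimes TN, \R^q)\) and invoking the characterization of the norm of the derivative~\cite{cvs}*{proposition 2.2} give \(\abs{D^k_K u} \le w\) almost everywhere; lower semi-continuity of the \(L^p\)-norm on compact subsets, followed by monotone convergence on an exhaustion of \(M\) by compact sets~\cite{lee}*{proposition 4.76}, delivers the claimed inequality. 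The main obstacle I anticipate is the higher-order analogue of lemma~\ref{absTu}: one must carefully track how the metric compatibility of \(K\) interacts with the tensor norm on \((\otimes^{k-1} T^*M)\otimes TN\) at each level of recursion; once that pointwise inequality is in hand, the rest is a direct transposition of the second-order argument.
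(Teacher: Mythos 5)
Your proposal is correct and follows essentially the same route as the paper: the authors likewise reduce the statement to the weak closure property for first-order Sobolev maps applied recursively via the formulas~\eqref{eqMasterHigherOrder} and~\eqref{eqMasterHigherOrderBis}, and then repeat the argument of proposition~\ref{closurePropertySobolev} after establishing the higher-order analogue of lemma~\ref{absTu}, which is exactly the paper's lemma~\ref{absDku}. Your treatment of bilocal uniform integrability (automatic for \(p>1\), assumed for \(p=1\)) and the final Rellich--Kondrashov/weak-limit/monotone-convergence chain matches the paper's intended argument.
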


By relying recursively on the formulas~\eqref{eqMasterHigherOrder} and~\eqref{eqMasterHigherOrderBis} and applying the weak closure property for Sobolev maps~\cite{cvs}*{proposition 3.8} to the sequence \((T^{k-1} u_\ell)_{\ell \in N}\), we have \(u \in \dot{W}^{k-1,p}(M,N)\) and \(T^{k-1} u = \upsilon\). For the last inequality, we can proceed as in the proof of proposition~\ref{closurePropertySobolev}, and so by first proving a lemma similar to lemma~\ref{absTu}. 

\begin{lemme}\label{absDku}
Let \(p\in [1,+\infty)\). If \(u \in \dot{W}^{k-1,p}(M,N) \cap \dot{W}^{k,p}(M,N)\), then the map \(\abs{D^{k-1}_K u}_{(\otimes^{k-1} g^*_M) \otimes g_N}\) belongs to \(W^{1,p}(M)\) and almost everywhere in \(M\) 
\[
\bigabs{\,T \abs{D^{k-1}_K u}_{(\otimes^{k-1} g^*_M) \otimes g_N}}_{g^*_M \otimes g_1} \le \abs{D^k_K u}_{(\otimes^k g^*_M) \otimes g_N},
\]
where \(g_1\) is the Euclidean metric on \(\R\). 
\end{lemme}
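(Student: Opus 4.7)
The plan is to mirror closely the proof of lemma~\ref{absTu}, with $D^{k-1}_K u$ playing the role of $Tu$ and $D^k_K u$ playing the role of $D_K(Tu)$. The key structural input is proposition~\ref{covariantTkU} applied with $j = 1$, which provides the splitting
\[
T D^{k-1}_K u = \operatorname{Vert} \circ D^k_K u + \operatorname{Hor}_K \circ D^{k-1}_K u
\]
almost everywhere in $M$, together with the orthogonality of the vertical and horizontal subbundles in the Sasaki metric.

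First I would establish that $D^{k-1}_K u \in \dot{W}^{1,p}(M,(\otimes^{k-1} T^*M)\otimes TN)$. Indeed, the splitting above yields the pointwise identity
\[
\abs{T D^{k-1}_K u}^2_{g^*_M \otimes G^S} = \abs{D^{k-1}_K u}^2_{(\otimes^{k-1} g^*_M)\otimes g_N} + \abs{D^k_K u}^2_{(\otimes^k g^*_M)\otimes g_N},
\]
and by the assumption $u \in \dot{W}^{k-1,p}(M,N) \cap \dot{W}^{k,p}(M,N)$ together with the equivalence of norms on $\R^2$, the left-hand side belongs to $L^p(M)$. Since the norm map $\abs{\cdot}_{(\otimes^{k-1} g^*_M)\otimes g_N}$ is $1$-Lipschitz on $(\otimes^{k-1} T^*M)\otimes TN$ with respect to the distance induced by the Sasaki metric, the chain rule in Sobolev spaces between manifolds (\cite{cvs}*{proposition 2.6}) then gives $\abs{D^{k-1}_K u}_{(\otimes^{k-1} g^*_M)\otimes g_N} \in W^{1,p}(M)$.

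For the pointwise Kato-type inequality I would invoke the chain rule formula (\cite{cvs2}*{proposition 5.2}) together with the metricity of the connection $K$. The horizontal term $\operatorname{Hor}_K \circ D^{k-1}_K u$ contributes nothing to the weak derivative of $\abs{D^{k-1}_K u}_{(\otimes^{k-1}g^*_M)\otimes g_N}$, because horizontal lifts correspond to parallel transport and a metric connection preserves fiber norms. Only the vertical term $\operatorname{Vert} \circ D^k_K u$ remains, and a fiberwise Cauchy--Schwarz estimate in the resulting Kato identity gives the desired bound on the set where $D^{k-1}_K u \neq 0$; on the complementary set the weak derivative of $\abs{D^{k-1}_K u}_{(\otimes^{k-1}g^*_M)\otimes g_N}$ vanishes almost everywhere by a standard truncation argument. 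The main technical point is this cancellation of the horizontal part combined with Cauchy--Schwarz, but it is exactly the ingredient that already makes the $k = 2$ statement in lemma~\ref{absTu} work, so the argument runs in parallel.
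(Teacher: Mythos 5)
Your proposal is correct and follows exactly the route the paper intends: the paper states lemma~\ref{absDku} as the higher-order analogue of lemma~\ref{absTu} and leaves the proof to the same argument, namely the splitting from proposition~\ref{covariantTkU}, the Sasaki-metric identity giving \(D^{k-1}_K u \in \dot W^{1,p}(M,(\otimes^{k-1}T^*M)\otimes TN)\), the Lipschitz continuity of the fiber norm together with the chain rule of \cite{cvs}*{proposition 2.6}, and then the metric connection plus the chain rule formula of \cite{cvs2}*{proposition 5.2} for the Kato-type inequality. Your added remarks on the horizontal cancellation and the truncation on the zero set only make explicit what the paper's two-line proof of lemma~\ref{absTu} leaves implicit.
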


\section{Chain rule for higher order colocally weakly differentiable maps}\label{sectionChainRule} 

Since the chain rule is central in the definition of first-order colocal weak differentiability and since composition is a crucial tool in the theory of Sobolev maps, in particular, in the definition by embedding~\eqref{Wkp}, we investigate under which condition the chain rule holds for higher-order colocally weakly differentiable maps: whether
for a \(k\) times colocally weakly differentiable map \(u \colon M \to N\) and for a function \(f\in C^k_c(N,\R)\), the composite function \(f\circ u\) is \(k\) times weakly differentiable and whether we have \(T^k(f\circ u) = T^k f \circ T^k u\) almost everywhere on the manifold \(M\).

\subsection{Failure of the chain rule for twice colocally weakly differentiable maps}

The starting point of the analysis is that that the higher-order chain rule \emph{does not hold} in general. 
\begin{example}\label{exampleNoChainRule}
Let \(m \in \N\) with \(m \ge 2\) and let \(\alpha > 0\). We define the function \(u \colon \R^m\setminus\{0\} \to \R^2\) for each \(x\in \R^m\setminus\{0\}\) by 
\begin{equation*}
  u(x) = \bigl(\cos(\abs{x}^{-\alpha}), \sin(\abs{x}^{-\alpha})\bigr).
\end{equation*}
For every \(x\in \R^m\setminus\{0\}\), \(\abs{Du(x)}_{g^*_m \otimes g_2} = \alpha\abs{x}^{-\alpha -1}\) and \(\abs{D^2 u(x)}_{g^*_m \otimes g^*_m \otimes g_2} \ge \alpha\abs{x}^{-2\alpha-2}\). 
If \(\alpha-1 < m < 2(\alpha +1)\), then the map \(u\) is twice colocally weakly differentiable but if we take \(f\in C^2_c(\R^2, \R^2)\) such that \(f = \mathrm{id}\) on \(B_2\), \(f\circ u = u \) on \(\R^m \setminus\{0\}\) and so \(\abs{D^2(f\circ u)}_{g^*_m \otimes g^*_m \otimes g_2} = \abs{D^2 u}_{g^*_m \otimes g^*_m \otimes g_2}\) does not belong to \(L^1_{\mathrm{loc}}(\R^m)\).
\end{example}

The crucial point in this example is that \(\abs{D^2 u}\) is integrable on bounded sets on which the derivative \(\abs{Du}\) is bounded, but is not integrable on bounded sets on which the map \(u\) itself is bounded.

\subsection{Double norms for double vector bundles}

In order to characterize the maps for which a chain rule holds, we introduce a notion 
of norm for colocal weak derivatives of a bundle morphism. 
Since \(\Mor(T^2M, T^2N)\) does not carry a vector bundle structure over \(M\times N\), we cannot define a notion of norm on this space with respect to each fiber over \(M\times N\). 
However, we define a notion of \emph{double norm} on this space which is compatible with the double vector bundle structure. 

To fix the idea, since \(\pi^2_M \colon T^2M \to M\) is not a vector bundle~\citelist{\cite{bertram}*{\S 9.3}\cite{dieudonne}*{\S 16.15.7}} and \(T^2M\) is a double vector bundle, we first define this notion on \(T^2M\).
\begin{de}\label{defDoubleSeminorm}
A map \(\double{\cdot} \colon T^2 M \to \R\) is a \emph{double seminorm} on \(T^2 M\) whenever 
\begin{enumerate}[(a)]
\item the map \(x\in M \mapsto \double{\cdot}_x\) is continuous, where for every \(x\in M\), \(\double{\cdot}_x\) is the restriction of \(\double{\cdot}\) to the space \(T_x^2 M\), 
\item \(\double{\cdot}\) is a seminorm on fibers over both the bundles \((T^2M, T\pi_M, TM)\) and \((T^2M, \pi_{TM},\) \(TM)\), that is, for every \(e \in TM\), the restriction of \(\double{\cdot}\) to the fibers \((T\pi_M)^{-1}(\{e\})\) or \((\pi_{TM})^{-1}(\{e\})\) is a seminorm. 
\end{enumerate}
\end{de}

Consequently, if \(\double{\cdot}\) is a double seminorm on \(T^2M\), for every \(\nu, \mu \in T^2M\) such that \(\pi_{TM}(\nu) = \pi_{TM}(\mu)\) and every \(\lambda \in \R\), 
\[
\double{\lambda \cdot_{\pi_{TM}} \nu} = \abs{\lambda} \double{\nu},
\]
where \(\cdot_{\pi_{TM}}\) is the vector bundle multiplication of \((T^2M, \pi_{TM}, TM)\)~\cite{mackenzie}*{\S 1}, and 
\[
\double{\nu +_{\pi_{TM}} \mu} \le \double{\nu} + \double{\mu},
\]
where \(+_{\pi_{TM}}\) is the vector bundle addition of \((T^2M, \pi_{TM}, TM)\)~\cite{mackenzie}*{\S 1}; and it satisfies the same properties if we consider instead the vector bundle operations \(\cdot_{T\pi_M}\) and \(+_{T\pi_M}\) of \((T^2M, T\pi_M, TM)\).

\begin{de}\label{defDoubleNorm}
A map \(\double{\cdot} \colon T^2 M \to \R\) is a \emph{double norm} on \(T^2 M\) whenever 
\begin{enumerate}[(a)]
\item \(\double{\cdot}\) is a double seminorm on \(T^2 M\),
\item \(\double{\cdot}\) is maximal: for every double seminorm \(\double{\cdot}'\) on \(T^2 M\), there exists a positive continuous function \(\beta \in C(M,\R)\) such that for every \(\nu \in T^2 M\), 
\[
 \double{\nu}' \le \beta (\pi^2_M(\nu)) \double{\nu}. 
\]
\end{enumerate}
\end{de} 

The maximality implies that all double norms are locally equivalent. 
\begin{propo}\label{existsDoubleNormT2M}
There exists a double norm on \(T^2 M\).
\end{propo}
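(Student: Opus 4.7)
The plan is to build $\double{\cdot}$ by gluing an explicit double seminorm model defined in local charts via a partition of unity, and then to verify maximality through a fiberwise estimate that exploits both fiber scaling axioms simultaneously.

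In a local chart $(V,\psi)$ on $M$, use the induced coordinates $(x,v,\dot x,\dot v) \in \psi(V) \times \R^{3m}$ on $T^2 M |_V$ (so that $\pi^2_M(\nu)$, $\pi_{TM}(\nu)$, and $T\pi_M(\nu)$ have coordinates $x$, $(x,v)$, and $(x,\dot x)$, respectively) and set
\[
\double{\nu}_V := \abs{\dot v} + \abs{v}\,\abs{\dot x}
\]
with $\abs{\cdot}$ the Euclidean norm in the chart. A direct check of homogeneity and the triangle inequality in the variables $(\dot x,\dot v)$ (with $v$ fixed) and in $(v,\dot v)$ (with $\dot x$ fixed) shows that $\double{\cdot}_V$ is a continuous double seminorm on $T^2M|_V$. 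Taking a locally finite atlas $(V_\alpha,\psi_\alpha)$ on $M$ with a subordinate partition of unity $(\rho_\alpha)$, the globally defined function
\[
\double{\nu} := \sum_\alpha \rho_\alpha\bigl(\pi^2_M(\nu)\bigr)\, \double{\nu}_{V_\alpha}
\]
is then continuous on $T^2 M$ by local finiteness and remains a double seminorm, because a non-negative combination of double seminorms is a double seminorm.

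For maximality, given an arbitrary double seminorm $\double{\cdot}'$ on $T^2M$, I aim to produce in each chart $V_\alpha$ a continuous function $C_\alpha \colon V_\alpha \to [0,+\infty)$ with $\double{\nu}' \le C_\alpha(x) \double{\nu}_{V_\alpha}$ whenever $x = \pi^2_M(\nu) \in V_\alpha$. Since $(v,0,0)$ is the zero of the $\pi_{TM}$-fiber over $v$ and $(0,\dot x,0)$ is the zero of the $T\pi_M$-fiber over $\dot x$, the seminorm axioms force $\double{(v,0,0)}' = \double{(0,\dot x,0)}' = 0$, and the triangle inequality in the $\pi_{TM}$-fiber gives
\[
\double{(v, \dot x, \dot v)}' \le \double{(v, 0, \dot v)}' + \double{(v, \dot x, 0)}'.
\]
The restriction of $\double{\cdot}'$ to the $T\pi_M$-fiber over $0$ is a seminorm in $(v,\dot v)$ vanishing on the subspace $\{(v,0)\}$, so it factors through the projection $(v,\dot v) \mapsto \dot v$ and is dominated by $B(x)\abs{\dot v}$; the function $(v,\dot x) \mapsto \double{(v, \dot x, 0)}'$ is positively homogeneous of degree one in $v$ and in $\dot x$ separately, and compactness of the product of unit spheres together with the continuity of $\double{\cdot}'$ on $T^2 M$ yields $\double{(v, \dot x, 0)}' \le A(x) \abs{v}\abs{\dot x}$ for a continuous $A$. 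Setting $C_\alpha := A + B$ then yields the local bound.

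To finish, choose a continuous $\beta \colon M \to (0,+\infty)$ with $\beta(x) \ge C_\alpha(x)$ for every $\alpha$ such that $x \in \supp(\rho_\alpha)$, which is possible by local finiteness, and observe that
\[
\double{\nu}' = \Bigl(\sum_\alpha \rho_\alpha(x)\Bigr) \double{\nu}' \le \sum_\alpha \rho_\alpha(x) C_\alpha(x) \double{\nu}_{V_\alpha} \le \beta(x) \double{\nu}.
\]
The main difficulty is the bilinear bound on $\double{(v, \dot x, 0)}'$ in the fiberwise estimate: the product $\abs{v}\abs{\dot x}$ arises only from the interaction of the two separate scaling axioms, and continuous dependence of the constants on $x$ relies on the pointwise continuity of $\double{\cdot}'$ on the whole of $T^2 M$, not merely fiberwise.
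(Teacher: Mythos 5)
Your proof is correct and follows essentially the same route as the paper: the chart model $\abs{\dot v}+\abs{v}\,\abs{\dot x}$ is exactly the paper's canonical double seminorm $\abs{e_1}\,\abs{e_2}+\abs{e_{12}}$, and the paper likewise patches these local seminorms by a partition of unity and concludes by the same chartwise domination of an arbitrary double seminorm. The only difference is that you spell out the fiberwise decomposition and the separate homogeneity/subadditivity argument behind the bound $\double{\nu}' \le C\,\double{\nu}_*$, which the paper merely asserts in its sketch.
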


\begin{proof}[Sketch of the proof]
If \(M = \R^m\), then \(T^2 M = \R^m \times \R^m \times \R^m\times \R^m\) and a canonical double seminorm \(\double{\cdot}_* \colon T^2M \to \R\) can be defined for every \(\nu = (x, e_1, e_2, e_{12}) \in T^2 M\) by 
\[
  \double{\nu}_* = \abs{e_1}\,\abs{e_2} + \abs{e_{12}}. 
\]
One concludes by noting that for any double seminorm \(\double{\cdot}\) on \(T^2M\), there exists \(C >0\) such that for every \(\nu\in T^2M\), \(\double{\nu} \le C \double{\nu}_*\). 

For a general manifold \(M\), since for every \(x\in M\), there exists a local trivialization \((V,\psi)\) such that \(x\in V\), the set \(V \subseteq M\) is open and 
\[
\psi \colon V \times \R^m \times \R^m \times \R^m \to (\pi^2_M)^{-1}(V) \subseteq T^2M
\]
is a diffeomorphism~\cite{dieudonne}*{\S 16.15.7}, the argument above gives in each local trivialization a canonical double seminorm. 
These local double seminorms can then be patched together by a partition of unity.
\end{proof}

\begin{rem}
The reader will observe that a double norm is a norm on fibers over non zero elements.
This implies that if \(\nu \in T^2 M\) satisfies \(\kappa_M (\nu) = \nu\) and if 
\(\double{\nu} = 0\), then \(\nu = 0_{\pi^2_M(\nu)}\), 
where \(0_{\pi^2_M(\nu)}\) is the double zero, that is, the vector such that \(0_{\pi_{TM}}(\nu)= 0_{T\pi_M}(\nu)\)~\cite{mackenzie}*{definition 1.1}.
We will not rely on these properties because they fail for \(k^\textrm{tuple}\) norms on \(k^\text{tuple}\) vector bundles, see section~\ref{higherOrderChainRule} below.
\end{rem}

Since \(\Mor(T^2M, T^2N)\) has a double vector bundle structure, the definition and the existence of double norms can be obtained in a similar way on \(\Mor(T^2M, T^2N)\).

Motivated by the chain rule, we just remark how double norms behaves under composition of double vector bundle morphisms. 
\begin{propo}\label{compoDoubleVectorMorphism}
Let \(K\) be a manifold of class \(C^2\). 
For every compact subsets \(Q_M \subseteq M\), \(Q_N \subseteq N\) and \(Q_K \subseteq K\), there exists a constant \(C > 0\) such that for every \(\upsilon\in (\pi^2_{M\times N})^{-1}(Q_M \times Q_N)\) and every \(\xi \in (\pi^2_{N\times K})^{-1}(Q_N \times Q_K)\) such that \(\pi_N \circ \pi_{M\times N} \circ \pi^1_{T^*M\otimes TN} \circ\) \(\upsilon = \pi_N \circ \pi_{N\times K} \circ \pi^1_{T^*N\otimes TK} \circ \xi\), then 
\[
\double{\xi \circ \upsilon} 
\le C \double{\xi} \double{\upsilon}.
\]
\end{propo}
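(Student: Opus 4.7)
The plan is to reduce to a coordinate computation in finitely many trivializations, use a canonical model double norm, and globalize via compactness and the local equivalence of double norms. The maximality axiom in definition~\ref{defDoubleNorm} makes any two double norms on a given double vector bundle comparable up to a continuous positive factor on the base. Hence it suffices to prove the inequality for one convenient choice of double norm on each of $\Mor(T^2M, T^2N)$, $\Mor(T^2N, T^2K)$ and $\Mor(T^2M, T^2K)$; continuity of the comparison ratios and the compactness of $Q_M \times Q_N$ and $Q_N \times Q_K$ then deliver the global constant.

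Next, cover $Q_M$, $Q_N$, $Q_K$ by finitely many local trivializations of the respective double vector bundles. In such a trivialization, a fiber element $\upsilon \in \Mor(T^2M, T^2N)_{x,y}$ is encoded by a triple $(\upsilon_1, \upsilon_2, \beta_\upsilon)$, where $\upsilon_1, \upsilon_2 \in \mathcal{L}(\R^m, \R^n)$ are the linear maps prescribed by $\pi^1_{T^*M \otimes TN}(\upsilon)$ and $\pi^2_{T^*M \otimes TN}(\upsilon)$, and $\beta_\upsilon \colon \R^m \times \R^m \to \R^n$ is the bilinear map encoding the purely second-order part of $\upsilon$, in the same way that $D^2u(x)$ encodes the second derivative of a map $u\in C^2(M,N)$. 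Modelled on the canonical double seminorm of the proof of proposition~\ref{existsDoubleNormT2M}, one can take as a convenient double norm
\[
\double{\upsilon} = \norm{\upsilon_1}\,\norm{\upsilon_2} + \norm{\beta_\upsilon},
\]
and analogously for $\xi$.

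Computing $\xi \circ \upsilon$ in coordinates yields $(\xi\circ\upsilon)_i = \xi_i \circ \upsilon_i$ for $i=1,2$ and a bilinear part of the form $\beta_\xi(\upsilon_1\,\cdot,\, \upsilon_2\,\cdot) + C_\xi \circ \beta_\upsilon$, where $C_\xi$ is a linear map controlled by $\xi_1, \xi_2$. Submultiplicativity of operator norms then gives
\[
\double{\xi \circ \upsilon} \le \norm{\xi_1}\norm{\upsilon_1}\norm{\xi_2}\norm{\upsilon_2} + \norm{\beta_\xi}\norm{\upsilon_1}\norm{\upsilon_2} + \norm{C_\xi}\norm{\beta_\upsilon} \le C'\,\double{\xi}\,\double{\upsilon}
\]
on each chart. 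Patching via a finite subcover and using that the smooth transition functions are bounded on $Q_M$, $Q_N$ and $Q_K$ produces the global constant $C$. The main obstacle is structural rather than analytical: one must justify the local triple description of a double vector bundle morphism, check its compatibility with the canonical double norm, and verify that changes of trivialization act boundedly on such triples on compact subsets of the base. Once this local model is in place, the submultiplicative estimate reduces to elementary arithmetic with operator norms.
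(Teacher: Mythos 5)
Your overall strategy---reduce to a canonical local model in finitely many trivializations, prove submultiplicativity there, and globalize by compactness together with the maximality (hence local equivalence) of double norms---is exactly the paper's. The gap is in your local model of a fiber of \(\Mor(T^2M,T^2N)\). In a trivialization a double vector bundle morphism acts as \(\upsilon(x,e_1,e_2,e_{12}) = \bigl(y,\ \upsilon_1[e_1],\ \upsilon_2[e_2],\ \upsilon_{12}[e_{12}] + \hat{\upsilon}_{12}[e_1,e_2]\bigr)\): besides the two side maps and the bilinear core there is an \emph{independent} linear component \(\upsilon_{12}\) acting on the second-order coordinate, and the canonical double seminorm used in the paper is \(\abs{\upsilon_1}\,\abs{\upsilon_2} + \abs{\upsilon_{12}} + \abs{\hat{\upsilon}_{12}}\). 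Your triple \((\upsilon_1,\upsilon_2,\beta_\upsilon)\) omits \(\upsilon_{12}\), and your claim that the map \(C_\xi\) appearing in the composed bilinear part is ``controlled by \(\xi_1,\xi_2\)'' is false for general fiber elements: \(C_\xi=\xi_{12}\) is an independent datum (it coincides with \(\xi_1=\xi_2\) only for morphisms of the special form \(T^2\varphi(y)\), whereas the proposition is stated for arbitrary \(\xi\in(\pi^2_{N\times K})^{-1}(Q_N\times Q_K)\)).

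Consequently the functional \(\norm{\upsilon_1}\,\norm{\upsilon_2}+\norm{\beta_\upsilon}\) that you estimate is not a double norm in the sense of definition~\ref{defDoubleNorm}: it vanishes on morphisms with \(\upsilon_1=\upsilon_2=0\), \(\hat{\upsilon}_{12}=0\) but \(\upsilon_{12}\neq 0\), so it does not dominate the canonical seminorm and fails maximality. Your reduction ``it suffices to prove the inequality for one convenient double norm'' is correct, but only if the convenient choice really is a double norm; proving submultiplicativity for a strictly weaker seminorm neither controls the term \(\abs{(\xi\circ\upsilon)_{12}}=\abs{\xi_{12}\circ\upsilon_{12}}\) appearing in the genuine double norm of the composition nor yields, via maximality, the stated inequality. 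The repair is exactly the paper's computation: keep all four components, take \(\double{\upsilon}_*=\abs{\upsilon_1}\,\abs{\upsilon_2}+\abs{\upsilon_{12}}+\abs{\hat{\upsilon}_{12}}\), note that \((\xi\circ\upsilon)_i=\xi_i\circ\upsilon_i\), \((\xi\circ\upsilon)_{12}=\xi_{12}\circ\upsilon_{12}\) and \(\widehat{(\xi\circ\upsilon)}_{12}=\xi_{12}\circ\hat{\upsilon}_{12}+\hat{\xi}_{12}[\upsilon_1\cdot,\upsilon_2\cdot]\), which gives \(\double{\xi\circ\upsilon}_*\le\double{\xi}_*\,\double{\upsilon}_*\); with this corrected local estimate your globalization through trivializations, compactness and the maximality property goes through as in the paper.
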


\begin{proof}[Sketch of the proof]
If \(M =\R^m\) and \(N =\R^n\), a canonical double seminorm \(\double{\cdot}_*\) on \(\Mor(T^2M, T^2N)\) can be defined for every \(f\in \Mor(T^2M,T^2N)\) by
\[
\double{f}_* = \abs{f_1}\, \abs{f_2} + \abs{f_{12}} + \abs{\hat{f}_{12}}
\]
since for every \(\nu =(x,e_1,e_2,e_{12}) \in T^2M\), 
\(
f(\nu) = (y, f_1[e_1], f_2[e_2], f_{12}[e_{12}] + \hat{f}_{12}[e_1,e_2]),
\) 
where \(f_1,f_2,f_{12} \colon \R^m \to \R^n\) are linear and \(\hat{f}_{12} \colon \R^m \times \R^m \to \R^n\) is bilinear. 
Then if \(K= \R^k\), for every \(f\in \Mor(T^2M,T^2N)\) and \(h\in \Mor(T^2N, T^2K)\),  we have 
\[
\double{h \circ f}_* \le \double{h}_* \double{f}_*.
\]

For any manifolds \(M\), \(N\) and \(K\), it suffices to prove the composition property in each local trivialization by using the one of the canonical double seminorm and then the maximal property of double norms.  
\end{proof}

\subsection{Chain rule for second order colocally weakly differentiable maps}
In order to have a chain rule for twice colocally weakly differentiable maps, we assume an additional integrability condition on colocal weak derivatives of bundle morphisms in terms of a double norm. We also identify the class of maps to compose with the colocal weak derivative in order to have a chain rule. 
 
For every map \(h\colon T^*M\otimes TN \to\R^q \), we denote by \(\supp_N (h)\) the \emph{support of \(h\) with respect to \(N\)}, that is, 
\[
\begin{split}
\supp_N(h) 
& = \overline{\bigl\{ y \in N \colon \text{ there exists \(x\in M\) such that } h_{\arrowvert_{\left(\pi_{M\times N}\right)^{-1}\left((x,y)\right)}} \neq 0\bigr\}}\\
& =\pi_N\bigl( \pi_{M \times N} (\supp(h))\bigr).
\end{split}
\]

\begin{propo}\label{equivalentChainRule}
Let \(u \colon M \to N\) be a colocally weakly differentiable map. 
The following statements are equivalent. 
\begin{enumerate}[(i)]
\item\label{chainRule} 
For every \(f\in C^2_c(N,\R)\), the map \(f\circ u\) is twice weakly differentiable,
\item \label{compactMorphism} 
for every bundle morphism \(h \in C^1(T^*M\otimes TN, \R)\) such that \(\supp_N (h)\) is compact, the map \(h \circ\,Tu\) is weakly differentiable,
\item\label{twice} 
the map \(u\) is twice colocally weakly differentiable and for all compact subsets \(K\subseteq M\) and \(L\subseteq N\) and every double norm \(\double{\cdot}\) on \(\Mor(T^2M, T^2N)\),
\begin{equation*}
\int_{u^{-1}(L) \cap K} \double{T^2 u} < + \infty,
\end{equation*}
\end{enumerate}
\end{propo}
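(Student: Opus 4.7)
My plan is to prove the cyclic implications (iii) $\Rightarrow$ (ii) $\Rightarrow$ (i) $\Rightarrow$ (iii); this order is convenient since only (iii) explicitly asserts twice colocal weak differentiability. For (iii) $\Rightarrow$ (ii), fix a bundle morphism \(h \in C^1(T^*M \otimes TN, \R)\) with compact \(\supp_N(h) = L\) and a compact \(K \subseteq M\). The canonical double seminorm on \(\Mor(T^2 M, T^2 N)\) from the sketch of proposition~\ref{compoDoubleVectorMorphism} dominates \(\abs{Tu}^2\) (corresponding to the component \(\pi^1_{T^*M\otimes TN} \circ T^2 u\)), so by maximality (definition~\ref{defDoubleNorm}) any double norm satisfies \(\abs{Tu}^2 \le C \double{T^2 u}\) locally; the integrability assumption then yields \(Tu \in L^2_{\mathrm{loc}}(K \cap u^{-1}(L))\), whence \(h \circ Tu \in L^1_{\mathrm{loc}}(M)\) by fiber linearity and compact \(N\)-support of \(h\). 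I then truncate \(h\) in the fiber direction by smooth cutoffs \(\chi_R(\abs{\xi}) \to 1\) to produce \(h_R \in C^1_c(T^*M\otimes TN, \R)\); proposition~\ref{propoTTU} yields \(T(h_R \circ Tu) = Th_R \circ T^2 u\), while proposition~\ref{compoDoubleVectorMorphism} gives \(\abs{Th_R \circ T^2 u} \le C \double{Th_R}\, \double{T^2u}\) with \(\double{Th_R}\) uniformly bounded on \(u^{-1}(L) \cap K\) by a constant depending only on \(h\). Dominated convergence and the standard closure of weak derivatives pass to the limit \(R \to \infty\) and yield weak differentiability of \(h \circ Tu\).

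For (ii) $\Rightarrow$ (i), fix \(f \in C^2_c(N, \R)\) and, for any compactly supported \(C^1\) vector field \(X\) on \(M\), define \(h_X \colon T^*M \otimes TN \to \R\) by \(h_X(\xi) = df_y(\xi \cdot X(x))\) for \(\xi \in T^*_x M \otimes T_y N\). Then \(h_X\) is a \(C^1\) bundle morphism with \(\supp_N(h_X) \subseteq \supp f\) compact, and \(h_X \circ Tu = T(f \circ u)[X]\). By (ii) this scalar function is weakly differentiable; running \(X\) over a finite local frame in each chart shows that \(T(f \circ u)\) is weakly differentiable, hence \(f \circ u\) is twice weakly differentiable.

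For (i) $\Rightarrow$ (iii), at each \(y \in N\) apply lemma~\ref{lemmeExtended} to obtain \(U \ni y\), \(\varphi \in C^2(N, \R^n)\) and \(\varphi^* \in C^2(\R^n, N)\) with \(\varphi^* \circ \varphi = \id\) on \(U\). Each component of \(\varphi\) lies in \(C^2_c(N, \R)\), so \(\varphi \circ u \in W^{2,1}_{\mathrm{loc}}(M, \R^n)\) by (i); since \(\varphi\) is bounded, the classical Gagliardo--Nirenberg interpolation \(W^{2,1}_{\mathrm{loc}} \cap L^\infty \hookrightarrow W^{1,2}_{\mathrm{loc}}\) gives \(\abs{D(\varphi \circ u)}^2 \in L^1_{\mathrm{loc}}\). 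On \(u^{-1}(U)\) one has \(Tu = T\varphi^* \circ T(\varphi \circ u)\), and, using the cutoff construction from the proof of proposition~\ref{propoTTU}, any \(g \in C^1_c(T^*M \otimes TN, \R)\) localized in \(u^{-1}(U)\) has \(g \circ Tu\) weakly differentiable; a partition of unity on \(N\) globalizes to yield twice colocal weak differentiability of \(u\). For the integrability, cover \(L\) by finitely many such \(U_i\), write \(T^2 u = T^2 \varphi_i^* \circ T^2(\varphi_i \circ u)\) on each \(u^{-1}(U_i)\), and apply proposition~\ref{compoDoubleVectorMorphism} to bound \(\double{T^2 u}\) locally by \(C_i \double{T^2(\varphi_i \circ u)}\); the canonical double seminorm then controls \(\double{T^2(\varphi_i \circ u)}\) by \(\abs{D(\varphi_i \circ u)}^2 + \abs{D^2(\varphi_i \circ u)}\), both of which lie in \(L^1_{\mathrm{loc}}\).

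The main obstacle is (i) $\Rightarrow$ (iii): the quantitative double-norm integrability forces one to square the first derivative via Gagliardo--Nirenberg and tie this to the composition inequality of proposition~\ref{compoDoubleVectorMorphism}. The delicate point in (iii) $\Rightarrow$ (ii) is that \(h\) is only compactly supported in the \(N\)-direction, not in the fiber direction, so the fiber truncation must be controlled uniformly, which is precisely what the double-norm composition bound provides.
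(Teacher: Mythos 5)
Your proposal is correct in substance but runs the equivalence in the opposite cyclic order from the paper, which proves (i) $\Rightarrow$ (ii) $\Rightarrow$ (iii) $\Rightarrow$ (i). The paper's hard step is (i) $\Rightarrow$ (ii), handled by a dedicated lemma (lemma~\ref{likeGN}): smoothing, the Gagliardo--Nirenberg inequality and the closing lemma show that \(w \circ Tv\) is weakly differentiable when \(v \in W^{2,1}_{\mathrm{loc}} \cap L^\infty\) and \(w\) is fiberwise linear with compact support in the target direction. You instead fold that content into (i) $\Rightarrow$ (iii), and your (ii) $\Rightarrow$ (i) via the test morphisms \(h_X(\xi) = df(\xi[X])\) for compactly supported vector fields \(X\) is a short argument with no counterpart in the paper (it is genuinely simpler than the paper's corresponding direction). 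Your (iii) $\Rightarrow$ (ii) generalizes the paper's (iii) $\Rightarrow$ (i): the paper truncates only the specific function \(\Tilde{f}_\ell = \theta_\ell \cdot (Tf)\), while you truncate an arbitrary fiberwise linear \(h\) with compact \(N\)-support; the estimate is the same in both cases. Two steps need tightening. First, in (iii) $\Rightarrow$ (ii) the uniform bound on \(T(h_R \circ Tu)\) is not literally an application of proposition~\ref{compoDoubleVectorMorphism}, whose hypotheses involve compact base sets and do not control the noncompact fiber region where \(Tu\) takes values; as in the paper's own estimate for \(\Tilde{f}_\ell\), you must argue directly that the fiberwise linearity of \(h\) (linear growth of \(h\) and of its base derivatives in the fiber variable) is exactly compensated by the factor \(1/R\) carried by \(\chi_R'\), giving \(\abs{T(h_R \circ Tu)} \le C\bigl(1 + \double{T^2 u}\bigr)\) on \(u^{-1}(L) \cap K\) uniformly in \(R\); you should also cut off in the \(M\)-direction, since compactness of \(\supp_N(h)\) does not make \(\supp h\) compact when \(M\) is noncompact. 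Second, in (i) $\Rightarrow$ (iii) the phrase ``the cutoff construction from proposition~\ref{propoTTU}'' does not by itself yield weak differentiability of \(g \circ Tu\): the outer function \(g \circ T\varphi^*\) is only \(C^1\) with derivative of linear growth in the fiber variable, so you need the approximation-and-domination argument of lemma~\ref{likeGN} (or the chain rule invoked in the paper's proof that (ii) implies (iii)), checking that the formal derivative is dominated by \(C\bigl(1 + \abs{D(\varphi \circ u)}^2 + \abs{D^2(\varphi \circ u)}\bigr)\), which Gagliardo--Nirenberg makes locally integrable. With these two points spelled out, your reversed cycle is a complete alternative proof; what it buys is a cleaner passage from (ii) to (i) and a statement-level explanation of (ii) as the natural intermediate class, at the cost of concentrating all the analytic work in the single implication (i) $\Rightarrow$ (iii).
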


We prove an intermediate result in order to prove this proposition. 
\begin{lemme}\label{likeGN}
If \(v \in W^{2, 1}_{\mathrm{loc}} (M, \R^q) \cap L^\infty(M,\R^q)\), if the bundle morphism \(w \in C^1(T^*M \otimes T\R^q,\) \(\R)\) is such that \(\supp_{\R^q} (w)\) is compact, then \(w \circ\, Tv\) is weakly differentiable. 
\end{lemme}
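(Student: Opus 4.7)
The plan is to reduce to a local chart, to use the classical Gagliardo--Nirenberg interpolation inequality to secure the integrability of $\abs{Dv}^2$, and then to verify the chain-rule formula via mollification.

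Since weak differentiability is a local property, one would work in a local chart $\psi \colon V \subseteq M \to \Omega \subseteq \R^m$. Because $w$ is of class $C^1$, linear on the fibers of $T^*M \otimes T\R^q \to M \times \R^q$, and compactly supported with respect to $\R^q$, in this chart it takes the form $w(x,y,\xi) = A(x,y)[\xi]$, with $A \in C^1(\Omega \times \R^q, \Lin(\Lin(\R^m,\R^q),\R))$ vanishing outside $\Omega \times L$ for some compact $L \subseteq \R^q$. Writing also $v$ for $v \circ \psi^{-1}$, this rewrites $w \circ Tv$ as the pointwise expression $A(\cdot, v)[Dv]$.

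The first key step is to apply the classical Gagliardo--Nirenberg interpolation inequality to $v$: from $v \in W^{2,1}_{\mathrm{loc}} \cap L^\infty$ one obtains, for every $K' \Subset K \Subset \Omega$,
\[
\int_{K'} \abs{Dv}^2 \le C \norm{v}_{L^\infty(K)} \int_{K} \abs{D^2 v},
\]
so that $Dv \in L^2_{\mathrm{loc}}(\Omega, \R^q)$. With this integrability in hand, the candidate weak derivative produced by the formal chain rule
\[
D[w \circ Tv] = (D_x A)(\cdot, v)[Dv] + (D_y A)(\cdot, v)[Dv, Dv] + A(\cdot, v)[D^2 v]
\]
is manifestly in $L^1_{\mathrm{loc}}(\Omega)$: the outer terms are controlled by the boundedness of $A$ and $D_x A$ (via the compact support of $A$ in $y$) together with $Dv, D^2 v \in L^1_{\mathrm{loc}}$; the quadratic middle term is controlled precisely by $\abs{Dv}^2 \in L^1_{\mathrm{loc}}$.

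To verify that this expression really is the weak derivative, I would mollify: set $v_\varepsilon = v * \rho_\varepsilon$, which yields smooth maps with $\norm{v_\varepsilon}_\infty \le \norm{v}_\infty$ and $v_\varepsilon \to v$ in $W^{2,1}_{\mathrm{loc}}$, while the $L^2_{\mathrm{loc}}$ control on $Dv$ upgrades this to $Dv_\varepsilon \to Dv$ in $L^2_{\mathrm{loc}}$. The classical chain rule holds pointwise for each smooth $v_\varepsilon$, and each of the three terms above converges in $L^1_{\mathrm{loc}}$ to its counterpart with $v$ --- the middle one precisely because of the $L^2_{\mathrm{loc}}$ convergence of $Dv_\varepsilon$ combined with the uniform $C^1$-bound on the compactly $y$-supported map $A$. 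Passing to the limit in the integration by parts $\int_\Omega (w \circ Tv_\varepsilon)\, \partial_k \phi = -\int_\Omega \partial_k(w \circ Tv_\varepsilon)\, \phi$ for arbitrary $\phi \in C^\infty_c(\Omega)$ identifies the weak derivative of $w \circ Tv$ with the formal expression. The main obstacle is the quadratic middle term; the whole argument pivots on Gagliardo--Nirenberg.
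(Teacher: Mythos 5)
Your proposal is correct and follows essentially the same route as the paper: localize to a chart, approximate $v$ by mollification, use the classical Gagliardo--Nirenberg inequality to control the quadratic term $\abs{Dv}^2$ in $L^1_{\mathrm{loc}}$ (and to get $L^2_{\mathrm{loc}}$ convergence of the gradients), and pass to the limit in the chain rule to conclude weak differentiability of $w \circ Tv$. The paper phrases the last step via a closing lemma for weakly differentiable functions rather than an explicit integration by parts, but this is only a cosmetic difference.
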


We recall that the \emph{operator norm} is defined for every \(\xi \in \mathcal{L}(T_x M, T_y N)\) by 
\[
\abs{\xi}_\mathcal{L} = \sup \{ \abs{\xi(e)}_{g_N} \colon e \in T_x M, \, \abs{e}_{g_M} \le 1\},
\]
where \(g_M\) and \(g_N\) are some Riemannian metrics on \(TM\) and \(TN\) respectively~\cite{docarmo}*{\S 1.2 proposition 2.10}.

\begin{proof}[Proof of lemma~\ref{likeGN}]
Without loss of generality, we can assume that \(q=1\). 

First, we assume that \(M = \Omega\) is an open set of \(\R^m\). 
By regularization theorem~\citelist{\cite{brezis}*{theorem 4.22}\cite{bogachev}*{theorem 4.2.4}\cite{willem}*{theorem 4.3.9}}, there exists a sequence \((v_\ell)_{\ell\in \N}\) in \(C^\infty_c(\Omega,\R)\) that converges to \(v\) in \(W^{2,1}_\mathrm{loc}(\Omega)\). 
Moreover, there exists \(C >0\) such that for every \(\ell \in \N\), almost everywhere in \(\Omega\)
\[
\abs{T(w\circ T v_\ell)}_{\mathcal{L}} \le C \bigl(\abs{D v_\ell}_{g^*_m \otimes g_q}^2 + \abs{D^2 v_\ell}_{g^*_m \otimes g^*_m \otimes g_q} + \abs{Dv_\ell}_{g^*_m \otimes g_q}\bigr),
\]
where \(g_m\) and \(g_q\) are the Euclidean metrics on \(\R^m\) and \(\R^q\) respectively. 
By the classical Gagliardo--Nirenberg  inequality~\citelist{\cite{gagliardo}\cite{nirenberg}}, we have \(\abs{Dv}^2_{g^*_m \otimes g_q} \in L^1_\mathrm{loc}(\Omega)\) and so the sequence \((\abs{D v_\ell}^2_{g^*_m \otimes g_q})_{\ell\in \N}\) converges to \(\abs{Dv}^2_{g^*_m \otimes g_q} \) in \(L^1_\mathrm{loc}(\Omega)\). 
Hence, by Lebesgue's dominated convergence theorem, the sequence \( (\abs{T(w\circ T v_\ell)}_\mathcal{L})_{\ell\in \N}\) converges in \(L^1_\mathrm{loc}(\Omega)\) and then by closing lemma~\cite{willem}*{lemma 6.1.5}, the map \(w \circ\, Tv\) is weakly differentiable.

For a general manifold \(M\), we apply the previous argument to every \(v\circ \psi^{-1} \colon \psi(V) \subseteq \R^m \to \R\) with any local chart \(\psi \colon V \subseteq M \to \R^m\). By a direct covering argument, we have thus that \(w \circ\, Tv\) is weakly differentiable. 
\end{proof}

\begin{proof}%
[Proof of proposition~\ref{equivalentChainRule}]
We first prove that~\eqref{chainRule} implies~\eqref{compactMorphism}.
Let \((U_i)_{i\in I}\) be an open cover of the target manifold \(N\) by sets given by lemma~\ref{lemmeExtended}. 
Let \(h \in C^1(T^*M\otimes TN, \R)\) be a bundle morphism such that \(\supp_N (h)\) is compact. 
Then there exist \(\ell \in \N_*\) such that \(\supp_N (h) \subseteq \bigcup_{i=1}^\ell U_i\) and a partition of unity \((\eta_i)_{1\le i \le \ell}\)\ subordinate to the family \((U_i)_{1\le i \le \ell}\) such that
\[
h\circ T u = \sum_{i=1}^\ell (\eta_i \circ u) (h \circ T u) = \sum_{i=1}^\ell (\eta_i \circ u) \bigl((h \circ T\varphi_i^*) \circ T(\varphi_i \circ u)\bigr)
\]
almost everywhere in \(M\). By lemma~\ref{likeGN}, every term in the right sum is weakly differentiable and so \(h\circ\,Tu\) is weakly differentiable. 

Next, we prove that~\eqref{compactMorphism} implies~\eqref{twice}. 
First, let \((U_i)_{i\in I}\) be an open cover of \(N\) by sets given by lemma~\ref{lemmeExtended}. 
Let \(h \in C^1_c(T^*M\otimes TN, \R)\). 
Since the set \(\supp (h)\) is compact, in particular, there exist \(\ell \in \N_*\) and a finite partition of unity \((\eta_i)_{1\le i \le \ell}\)\ subordinate to \((U_i)_{1\le i \le \ell}\) such that almost everywhere in \(M\)
\begin{equation*}
h \circ\,Tu = \sum_{i=1}^\ell (\eta_i \circ u) (h\circ\,Tu) = \sum_{i=1}^\ell (\eta_i \circ u) \bigl(h \circ T\varphi_i^* \circ (T\varphi_i \circ\,Tu)\bigr).  
\end{equation*}
By assumption \(T \varphi_i \circ\,Tu\) is weakly differentiable, and thus by the chain rule for weakly differentiable functions (see for example~\citelist{\cite{willem}*{theorem 6.1.13}\cite{eg}*{theorem 4.2.4 (ii)}}), each term in the right sum is weakly differentiable, and so \(h\circ\,Tu \) is weakly differentiable. 
By proposition~\ref{propoTTU}, there exists a measurable map \(T^2 u \colon M \to \Mor(T^2M, T^2N)\) such that for every \(h \in C^1 (T^* M \otimes TN, \R)\) such that \(h\circ\,Tu \in W^{1,1}_\mathrm{loc}(M)\), we have almost everywhere in \(M\)
\[
 T (h \circ T u) = Th \circ T^2 u.
\]

We now prove that the integrability condition is satisfied. 
Let \((U_i)_{i\in I}\) be an open cover of \(N\) by sets given by lemma~\ref{lemmeExtended}. 
Let \(K\subseteq M\) and \(L\subseteq N\) be two compact subsets. 
Since the set \(L\) is compact, there exists \(\ell \in \N_*\) such that \(L \subseteq \bigcup_{i=1}^\ell U_i\). 
By construction, for every \(i \in \{1,\dots,\ell\}\), we have \(T^2(\varphi_i \circ u)= T^2 \varphi_i \circ T^2 u \) almost everywhere in \(M\) and, by lemma~\ref{lemmeExtended}, \(T^2\varphi_i^* \circ T^2 \varphi_i = \mathrm{id}\) on \((\pi_N^2)^{-1}(U_i)\), and so 
\(
T^2 u = T^2 \varphi^*_i \circ T^2(\varphi_i \circ u) 
\) 
almost everywhere in \(u^{-1}(U_i)\). 
Since for every \(i\in \{1,\dots,\ell\}\) we have \(\varphi_i\in C^2_c(N,\R^n)\), by proposition~\ref{compoDoubleVectorMorphism}, there exist constants \(C_1,\dots,C_\ell >0\) such that almost everywhere in \(u^{-1}(L) \cap K\)
\begin{align*}
\double{T^2 u}
& \le \sum_{i=1}^\ell \, \double{T^2\varphi^*_i \circ T^2(\varphi_i \circ u)}  
\le \sum_{i=1}^\ell C_i \, \double{T^2 \varphi^*_i} \, \double{T^2(\varphi_i \circ u)} \\
& \le (\max_{1\le i \le \ell} C_i) \bigl(\max_{1\le i \le \ell} \sup_{\R^n} \, \double{T^2 \varphi^*_i}  \bigr) 
\sum_{i=1}^\ell \, \double{T^2(\varphi_i \circ u)} .
\end{align*}
If \((\eta_j)_{j\in J}\) is a partition of unity subordinate to an atlas \(((\psi_j,V_j))_{j \in J}\) of local charts of \(M\), by the maximality property of double norms, there exists a positive continuous function \(\beta \in C(M\times \R^n,\R)\) such that for every \(i \in \{1,\dots,\ell\}\), almost everywhere in \(M\) 
\begin{multline*}
\double{T^2(\varphi_i \circ u)}
\le \beta(\cdot, \varphi_i \circ u) \sum_{j\in J} \eta_j 
\bigl( \abs{D(\varphi_i \circ u \circ \psi^{-1}_j) \circ \psi_j }^2_{g^*_m \otimes g_n} 
+ \abs{D(\varphi_i \circ u \circ \psi^{-1}_j) \circ \psi_j }_{g^*_m \otimes g_n} \\
+ \abs{D^2(\varphi_i \circ u \circ \psi^{-1}_j) \circ \psi_j }_{g^*_m \otimes g^*_m \otimes g_n}\bigr),
\end{multline*}
where \(g_m\) and \(g_n\) are the Euclidean metrics on \(\R^m\) and \(\R^n\) respectively. 
Since for every \(i\in \{1,\dots,\ell\}\), \(\varphi_i \circ u \in W^{2,1}_\mathrm{loc}(M,\R^n) \cap L^\infty(M,\R^n)\), by the Gagliardo--Nirenberg inequality~\citelist{\cite{gagliardo}\cite{nirenberg}}, \(\double{T^2(\varphi_i \circ u)} \in L^1(u^{-1}(L) \cap K)\) and so 
\[
\int_{u^{-1}(L) \cap K} \double{T^2 u} < + \infty. 
\]

Finally, we prove that~\eqref{twice} implies~\eqref{chainRule}. 
We take a function \(\theta \in C^1_c([0,+\infty),\R)\) such that \(0 \le \theta \le 1\) and \(\theta = 1\) on \([0,1]\). 
Then for every \(\ell\in \N\), we define the map \(\theta_\ell \colon T^*M\otimes TN \to \R\) for every \(y \in T^*M\otimes TN\) by 
\[
\theta_\ell (y) = \theta\Bigl(\frac{\abs{y}_{g_{T^*M\otimes TN}}}{1+\ell}\Bigr),
\]
where \(\abs{\cdot}_{g_{T^*M\otimes TN}}\) is a norm induced by a Riemannian metric on \(T^*M\otimes TN\)~\cite{docarmo}*{\S 1.2 proposition 2.10}.
Let \(f\in C^2_c(N,\R)\) and let \(K\subseteq M\) be a compact set. 
For every \(\ell \in \N\), we define \(\Tilde{f}_\ell = \theta_\ell \cdot (Tf)\). 
Since the map \(\Tilde{f}_\ell \colon T^*M\otimes TN \to \R\) is Lipschitz-continuous and its support is compact, \(\Tilde{f}_\ell \circ\,Tu\) is weakly differentiable~\cite{cvs}*{proposition 2.2} and there exist constants \(\Cl{a},\Cl{b}, \Cl{c} > 0\) such that for every \(\ell \in \N\) and almost everywhere in \(u^{-1}(\supp(f)) \cap K\)
\begin{align*}
& \abs{T(\Tilde{f}_\ell \circ T u)}_{\mathcal{L}} \\
& \hspace{5mm} \le \Cr{a} \,\Bigabs{\theta'\Bigl(\tfrac{\abs{Tu}_{g_{T^*M\otimes TN}}}{1+\ell}\Bigr)} \tfrac{1}{1+\ell}\, \double{T^2 u}\, \abs{T(f \circ u)}_{\mathcal{L}}
+ \Cr{b}\, \theta\Bigl(\tfrac{\abs{Tu}_{g_{T^*M\otimes TN}}}{1+\ell}\Bigr)\double{T^2 f}\, \double{T^2u} \\
& \hspace{5mm} \le \Cr{c} \,\double{T^2u}.
\end{align*}
Since the sequence \((\abs{T(\Tilde{f}_\ell \circ\,Tu)}_{\mathcal{L}})_{\ell \in \N}\) is bounded and uniformly integrable and since \((\Tilde{f}_\ell \circ T u)_{\ell \in \N}\) converges almost everywhere to \(Tf \circ\,Tu\) in \(K\subseteq M\), in view of the weak compactness criterion in \(L^1(K)\)~\citelist{\cite{bogachev}*{corollary 4.7.19}\cite{brezis}*{theorem 4.30}}, the sequence \((T(\Tilde{f}_\ell\circ\,Tu))_{\ell \in\mathbb{N}}\) converges weakly to \(T^2(f\circ u)\) in \(L^1(K)\) and \(T(f\circ u)\) is weakly differentiable. Hence, the map \(f\circ u\) is twice weakly differentiable.
\end{proof}

\subsection{Sequences of maps having the chain rule property}
Since the property \(f\circ u\) is twice weakly differentiable for every \(f\in C^2_c(N,\R)\) is stronger than double differentiability, one can wonder whether using this stronger property in the definition of Sobolev maps might not lead to better spaces.

Although there is a closure property for such maps in terms of double norms, the next example shows that there is no closure property in terms of covariant derivatives, which are the quantities that can be controlled and observed in Riemmanian geometry.

\begin{example}\label{conditionSecondCovariant}
Let \(m \in \N\) with \(m \ge 3\) and let \(\alpha > 0\).
Let \((u_\ell)_{\ell\in\N}\) be a sequence such that for every \(\ell \in \N\), the map \(u_\ell \colon \R^m \to \mathbb{S}^1 \subseteq \R^2\) is defined by 
\(u_\ell = (\cos \circ \,v_\ell, \sin \circ \,v_\ell)\),
where the function \(v_\ell \colon \R^m \to \R\) is defined for every \(x\in \R^m\) by 
\[
v_\ell(x) =\Bigl(\abs{x} + \frac{1}{1+\ell} \Bigr)^{-\alpha}.
\]
For every \(\ell\in \N\), \(\abs{Du_\ell}_{g^*_m\otimes g_{\mathbb{S}^1}} = \abs{D v_\ell}_{g^*_m\otimes g_1}\) and \(\abs{D_K( T u_\ell)}_{g^*_m \otimes g^*_m\otimes g_{\mathbb{S}^1}} = \abs{D^2 v_\ell}_{g^*_m\otimes g^*_m \otimes g_1}\). 
The sequence \((v_\ell)_{\ell\in \N}\) converges locally in measure to the map \(v\colon \mathbb{B}^m_r \to \R\) defined for every \(x\in \mathbb{B}^m_r \setminus\{0\}\) by \(v(x)=\abs{x}^{-\alpha}\). 

If \(\alpha < m - 2\), then the sequences \((Du_\ell)_{\ell \in \N}\) and \((D_K(T u_\ell))_{\ell \in \N}\) are bilocally uniformly integrable. 
Every term of the sequence \((u_\ell)_{\ell\in \N}\) satisfies the second-order chain rule property and the sequence \((u_\ell)_{\ell \in \N}\) converges locally in measure to the map \(u=(\cos \circ \,v,\sin \circ \,v) \colon \Bset^m_r \to \mathbb{S}^1\). 
However, if \(\alpha > \tfrac{m-2}{2}\), the map \(u\) does not satisfy the second-order chain rule. 
Indeed, if we take \(f \in C^2(\mathbb{S}^1, \R^2)\) such that \(f=\mathrm{id}\) on \(\mathbb{S}^1\subseteq \R^2\), then for every \(x\in \R^m\setminus \{0\}\), we have 
\(\abs{D^2(f \circ u)(x)}_{g^*_m \otimes g^*_m\otimes g_2} \ge \alpha \abs{x}^{-2\alpha -2}\) and so \(\abs{D^2(f \circ u)}_{g^*_m \otimes g^*_m\otimes g_2}\) does not belong to \(L^1_\mathrm{loc}(\Bset^m_r)\). 
\end{example}

In other words, the previous example exhibits a sequence \((u_\ell)_{\ell\in \N}\) satisfying the same assumptions on the sequences \((Tu_\ell)_{\ell\in \N}\) and \((D_K (Tu_\ell))_{\ell\in \N}\) together with a second-order chain rule than those of proposition~\ref{closureProperty} but with a limit that does not have the second-order chain rule property. 

In fact, given a sequence of measurable maps \((u_\ell)_{\ell\in \N}\) such that for every \(f\in C^2_c(N,\R)\), \(f\circ u_\ell\) is twice weakly differentiable, the sequence \((\double{T^2 u_\ell})_{\ell\in \N}\) has to be uniformly integrable.

\subsection{Chain rule for higher order colocally weakly differentiable maps}\label{higherOrderChainRule}

Given a \(k\) times colocally weakly differentiable map \(u\colon M \to N\), we can also investigate whether for every \(f\in C^k_c(N,\R)\), \(f\circ u\) is \(k\) times weakly differentiable. 

In a first step, the notion of double norm can be generalized to a notion of \(k^\textrm{tuple}\) norm on \(k^{\text{th}}\) order tangent bundles. 
A map \(\double{\cdot} \colon T^k M \to \R\) is a \emph{\(k^\textrm{tuple}\) seminorm} on \(T^k M\) if 
\begin{enumerate}[(i)]
\item the map \(x\in M \mapsto \double{\cdot}_{\arrowvert T_x^k M}\) is continuous, 
\item \(\double{\cdot}\) is a seminorm on fibers over each of the \(k\) vector bundle structure. 
\end{enumerate}
For example, if \(M = \R^m\) and \(I_k = 2^{\{1,\dots, k\}} \setminus \varnothing\), then the \emph{canonical \(k^\textrm{tuple}\) seminorm} is defined for every \(\nu = (x, (e_\lambda)_{\lambda \in I_k}) \in T^k M\) by 
\[
\double{\nu}_* 
= \sum_{\ell =1}^k \sum_{\lambda\in \mathcal{P}_\ell (\{1,\dots,k\})} \abs{e_{\lambda^1}} \dotsm \abs{e_{\lambda^\ell}},
\] 
where \(\mathcal{P}_\ell(\{1,\dots,k\}) = \{ \lambda = \{\lambda^1,\dots, \lambda^\ell\} \in 2^{I_k} \colon \dot{\cup}_{i=1}^\ell \lambda^i = \{1,\dots,k\}, \varnothing \not\in \lambda\}\) is the set of all partitions of \(\{1,\dots,k\}\) of length \(\ell\). 
As for the canonical double seminorm, for every \(k^\textrm{tuple}\) seminorm \(\double{\cdot}\) on \(T^k M\), there exists \(C >0\) such that for every \(\nu \in T^k M\), \(\double{\nu} \le C \double{\nu}_*\). 
As a consequence, for any manifold \(M\), there exists a \(k^\textrm{tuple}\) norm on \(T^k M\) defined as follows. 

\begin{de}\label{defkTupleNorm}
A map \(\double{\cdot} \colon T^k M \to \R\) is a \emph{\(k^\textrm{tuple}\) norm} on \(T^k M\) whenever 
\begin{itemize}
\item[(i)] \(\double{\cdot}\) is a seminorm on \(T^k M\), 
\item[(ii)] \(\double{\cdot}\) is maximal: for every \(k^\textrm{tuple}\) seminorm \(\double{\cdot}'\) on \(T^k M\), there exists a positive continuous function \(\beta \in C (M,\R)\) such that for every \(\nu \in T^k M\), 
\[
\double{\nu}' \le \beta (\pi^k_M (\nu)) \double{\nu},
\]
where \(\pi^k_M \colon T^k M \to M\) is the canonical submersion. 
\end{itemize}
\end{de}

\begin{rem}
Unlike the double norm, the \(k^\textrm{tuple}\) norm is not a norm on fibers over non zero elements.
For example, if \(M=\R^m\), for every \(\nu = (x,e_1,e_2,e_3, e_{12}, e_{23}, e_{13}, e_{123}) \in T^3 M\),
\[
\double{\nu}_* = 
\abs{e_1}\,\abs{e_2}\,\abs{e_3} + \abs{e_1}\,\abs{e_{23}} + \abs{e_2}\,\abs{e_{13}} + \abs{e_3}\,\abs{e_{12}} + \abs{e_{123}}. 
\]
In particular, if \(\nu =(x,0,0,0,e,e,e,0)\), then \(\pi_{T^2M}(\nu) = T\pi_{TM}(\nu) = T^2\pi_M(\nu)\) and
\(\double{\nu}_* = 0\) but \(\nu \neq 0_{\pi^3_M(\nu)}\) whenever \(e\neq 0\). 
\end{rem}

Since \(\Mor(T^k M, T^k N)\) has a \(k^\text{tuple}\) vector bundle structure, we can also define the notion of \(k^\textrm{tuple}\) norm on this space. 
For example, if \(M =\R^m\) and \(N = \R^n\), a \emph{canonical \(k^\textrm{tuple}\) norm} \(\double{\cdot}_*\) on \(\Mor(T^k M, T^k N)\) can be defined for every \(f\in \Mor(T^k M,T^k N)\) by 
\[
\double{f}_* = \sum_{\ell = 1}^k \sum_{\Lambda \in \mathcal{P}_\ell(\{1,\dots,k\})} 
\left(\sum_{j = 1}^{\ell} \sum_{\lambda \in \mathcal{P}_j(\Lambda)} \abs{f_{\lambda^1}} \dotsm \abs{f_{\lambda^j}} \right)
\]
since for every \(\nu\in T^k M\), 
\[
f(\nu) =\left( y, \left(\sum_{i=1}^{\abs{\Lambda}} \sum_{\lambda \in \mathcal{P}_i(\Lambda)} f_{\lambda}(e_{\lambda^1},\dotsc,e_{\lambda^i}) \right)_{\Lambda\in I_k} \right),
\]
where for every \(\Lambda\in I_k\), for every \(1\le i \le \abs{\Lambda}\) and every \(\lambda \in \mathcal{P}_i(\Lambda)\), the map \(f_\lambda \colon \times^i \R^m \to \R^n\) is \(i\)-linear. 

In a second step, we state equivalent assertions to the chain rule for higher-order colocally weakly differentiable maps. 
We recall that a locally integrable map \(u\colon M \to \R\) is \emph{\(k\) times weakly differentiable} if for every \(x\in M\), there exists a local chart \(\psi \colon V \subseteq M \to \R^m\) such that \(x\in V\) and the map \(u\circ \psi^{-1}\) belongs to \(W^{k,1}_\mathrm{loc}(\psi(V))\). 
For every map \(h\colon \Mor(T^k M,T^k N) \to\R^q \), we denote by \(\supp_N(h)\) the \emph{support of \(h\) with respect to \(N\)}, that is, 
\(
\supp_N( h) = \pi_N (\pi^k_{M\times N} (\supp(h))), 
\) 
where \(\pi^k_{M\times N} \colon \Mor(T^k M, T^k N) \to M \times N\) is a natural submersion. 

\begin{propo}\label{kChainRule}
Let \(j\in \{2,\dots,k\}\). Let \(u \colon M \to N\) be a \(j\) times colocally weakly differentiable map. 
The following statements are equivalent. 
\begin{enumerate}[(i)]
\item\label{kTimesChainRule} 
For every \(f\in C^k_c(N,\R)\), \(f\circ u\) is \(k\) times weakly differentiable, 
\item\label{kCompactMorphism}
for every \(f\in C^j_c(N,\R)\), \(f\circ u\) is \(j\) times weakly differentiable and for every \(h\in \Mor(\Mor(T^j M, T^j N),\R)\) of class \(C^{k-j}\) such that \(\supp_N (h)\) is compact, the map \(h\circ T^j u\) is \((k-j)\) times weakly differentiable,
\item\label{kTimes} 
the map \(u\) is \(k\) times colocally weakly differentiable and for all compact subsets \(K\subseteq M\) and \(L\subseteq N\), for all \(\ell\in \{2,\dots,k\}\), for all \(\ell^\text{tuple}\) norm \(\double{\cdot}\) on \(\Mor(T^\ell M, T^\ell N)\), 
\begin{equation*}
\int_{u^{-1}(L) \cap K} \double{T^\ell u} < + \infty. 
\end{equation*}
\end{enumerate}
Moreover, if for every \(f\in C^k_c(N,\R)\), \(f\circ u\) is \(k\) times weakly differentiable, then for all compact subsets \(K\subseteq M\) and \(L\subseteq N\), for all \(j^\text{tuple}\) norm \(\double{\cdot}\) on \(\Mor(T^j M, T^j N)\), 
\begin{equation*}
\int_{u^{-1}(L) \cap K} \double{T^j u}^\frac{k}{j} < + \infty.
\end{equation*}
\end{propo}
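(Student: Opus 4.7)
The plan is to prove this proposition by induction on $k$, with the base case $k=2$ (which forces $j=2$) being exactly proposition~\ref{equivalentChainRule}. For the inductive step, the key analytic ingredient is a higher-order counterpart of lemma~\ref{likeGN}: if $v\in W^{k,1}_{\mathrm{loc}}(M,\R^q)\cap L^\infty(M,\R^q)$ and $h$ is a $C^{k-j}$ bundle morphism of the appropriate type with $\supp_{\R^q}(h)$ compact, then $h\circ T^j v$ is $(k-j)$ times weakly differentiable. Its proof adapts lemma~\ref{likeGN}: approximating $v$ by smooth functions in $W^{k,1}_{\mathrm{loc}}$, the Fa\`a di Bruno formula expresses each partial derivative of $h\circ T^j v_\ell$ as a sum of products $D^{i_1}v_\ell\,\dotsm\,D^{i_s}v_\ell$ with $i_1+\dotsb+i_s\le k$ multiplied by bounded coefficients, and the Gagliardo--Nirenberg inequality~\citelist{\cite{gagliardo}\cite{nirenberg}} provides the local integrability of these products so that the closing lemma applies.

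For $\eqref{kTimesChainRule}\Rightarrow\eqref{kCompactMorphism}$, one localizes $u$ via the extended local charts of lemma~\ref{lemmeExtended} and applies the generalized lemma to $v=\varphi_i\circ u$, which is $k$ times weakly differentiable and bounded by construction; writing $f\circ u$ as a sum of such compositions yields the $j$ times weak differentiability when $f\in C^j_c(N,\R)$. For $\eqref{kCompactMorphism}\Rightarrow\eqref{kTimes}$, the inductive hypothesis applied to $Tu$ together with the assumption on $h\circ T^j u$ gives the $k$ times colocal weak differentiability of $u$; the integrability bound on $\double{T^\ell u}$ then follows from a multiplicative composition property for $\ell^\textrm{tuple}$ norms generalizing proposition~\ref{compoDoubleVectorMorphism}, which reduces the estimate to bounding $\double{T^\ell(\varphi_i\circ u)}$ in a local trivialization. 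By the maximality property of $\ell^\textrm{tuple}$ norms, this is dominated by a finite sum of products $\abs{D^{i_1}(\varphi_i\circ u)}\dotsm\abs{D^{i_s}(\varphi_i\circ u)}$ with $i_1+\dotsb+i_s\le \ell$, and local integrability follows from Gagliardo--Nirenberg applied to $\varphi_i\circ u\in W^{k,1}_{\mathrm{loc}}\cap L^\infty$.

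The implication $\eqref{kTimes}\Rightarrow\eqref{kTimesChainRule}$ mirrors the cutoff scheme from the proof of proposition~\ref{equivalentChainRule}: cutoffs $\theta_\ell$ on the norm of $T^{k-1}u$ produce approximations $\tilde{f}_\ell$ whose composition with $T^{k-1}u$ is Lipschitz-continuous and compactly supported, and a chain-rule estimate bounds $\abs{T(\tilde{f}_\ell\circ T^{k-1}u)}$ by a sum of products of $\double{T^\ell u}$ for $\ell\le k$, each locally integrable by hypothesis. The weak compactness criterion in $L^1$ then identifies the limit as $T^k(f\circ u)$ and yields the $k$ times weak differentiability. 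The ``moreover'' assertion is a direct consequence of the Gagliardo--Nirenberg interpolation inequality applied to $\varphi_i\circ u\in W^{k,1}_{\mathrm{loc}}\cap L^\infty$, which gives $\abs{D^j(\varphi_i\circ u)}^{k/j}\in L^1_{\mathrm{loc}}$ and hence, after localization and the maximality property, $\double{T^j u}^{k/j}\in L^1(u^{-1}(L)\cap K)$.

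The principal obstacle is the combinatorial bookkeeping needed to match the Fa\`a di Bruno expansions with the scaling exponents required by Gagliardo--Nirenberg: one must verify that every product of derivatives arising in $\double{T^\ell(\varphi_i\circ u)}$ has total order at most $k$, so that each factor $\abs{D^i(\varphi_i\circ u)}$ can be controlled by an interpolation estimate of the form $\norm{D^i w}_{L^{k/i}}\le C\norm{w}_{L^\infty}^{1-i/k}\norm{D^k w}_{L^1}^{i/k}$. The local nature of $k^\textrm{tuple}$ norms and the absence of a genuine norm property on single fibers over non-zero elements further complicate this reduction, but the maximality property of definition~\ref{defkTupleNorm} and the partition of unity argument of proposition~\ref{existsDoubleNormT2M} ensure that estimates obtained in canonical trivializations patch together correctly.
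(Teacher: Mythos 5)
Your proposal takes essentially the same route as the paper: the paper itself only remarks that proposition~\ref{kChainRule} is proved by restating the intermediate results (lemmas~\ref{likeGN} and~\ref{lemmeExtended}) with higher-order notions and repeating the structure of the proof of proposition~\ref{equivalentChainRule} -- extended local charts, the composition and maximality properties of \(\ell^\text{tuple}\) norms, Gagliardo--Nirenberg interpolation, and the cutoff argument for \eqref{kTimes}\(\Rightarrow\)\eqref{kTimesChainRule} -- which is exactly what you flesh out. Your induction-on-\(k\) framing and the explicit Fa\`a di Bruno bookkeeping are presentational elaborations of the same argument rather than a different method.
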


To prove this proposition, we need similar intermediate results to those of proposition~\ref{equivalentChainRule}. In fact, we can state every intermediate results (lemmas~\ref{likeGN} and~\ref{lemmeExtended}) with all higher-order notions and then the proof of proposition~\ref{kChainRule} has the same structure. 

If \(M\) is an open set of \(\R^m\) and \(N = \R\), it is well known that the left composition does not operate on higher-order Sobolev spaces~\citelist{\cite{adams}\cite{bourdaud}*{th\'eor\`eme 3 (i)}\cite{dahlberg}}. 
Thanks to proposition~\ref{kChainRule}, we have some examples for which the chain rule does not hold by giving maps that belong to \(W^{k,1}_\mathrm{loc}(\R^m)\) but that do not satisfy the integrability condition in assertion~\eqref{kTimes}. 
\begin{example}
Let \(\beta > \alpha >0\). The function \(u \colon \R^m \to \R\) is defined for every \(x\in \R^m\) by 
\[
u(x) = 
\begin{cases}
\frac{1}{\abs{x}^\alpha}\sin(\abs{x}^\beta) & \text{if } x \neq 0, \\
0 & \text{if } x= 0. 
\end{cases}
\]
If \(0 < \alpha < m \) and \(\frac{1}{3}(\alpha - m + 3) < \beta < \frac{1}{3} (3 \alpha - m +3)\), the map \(u\) belongs to \(W^{3,1}_\mathrm{loc}(\R^m)\) but there exist \(\gamma > 0\) and \(\delta >0\) such that for almost every \(x \in u^{-1}([-1,1]) \cap \overline{\mathbb{B}}^m_\delta\), 
\[
\abs{Du(x)}_{g^*_m\otimes g_1}\ge \gamma (\beta - \alpha) \abs{x}^{\beta - \alpha -1}. 
\]
Since there exists \(C >0\) such that \(\double{T^3 u} \ge C \abs{Du}^3_{g^*_m\otimes g_1}\) and since the map \(x \in \R^m\setminus \{0\} \mapsto \abs{x}^{\beta - \alpha -1}\) does not belong to \(L^3(u^{-1}([-1,1]) \cap \overline{\mathbb{B}}^m_\delta)\), the map \(u\) does not satisfy the integrability condition in assertion~\eqref{kTimes}.  
\end{example}

\section{Back to the definition by embedding}

\subsection{Comparison with the intrinsic definition} 
Given an isometric embedding of the target manifold, we compare the notion of higher-order Sobolev spaces by embedding~\eqref{Wkp} and the intrinsic one (definitions~\ref{defW2p} and~\ref{defWkp}). 

Let \(k\ge 2\). 
We assume that \((M,g_M)\) and \((N,g_N)\) are Riemannian manifolds with the respective Levi--Civita connection maps.  
For every \(q \ge 1\), we denote by \(g_q\) the Euclidean metric on \(\R^q\). 

If the target manifold is compact, we characterize higher-order Sobolev spaces by embedding~\eqref{Wkp}. 
A measurable map \(u\colon M \to N\) belongs to \(L^\infty(M,N)\) if 
\[
\mathrm{osc}(u) = \operatorname{ess\,sup}\, \bigl\{ d_N(u(x), u(y)) \colon x,y \in M \bigr\} < +\infty,
\]
where \(d_N\) is the geodesic distance on \(N\) induced by the Riemannian metric \(g_N\), and for every \(p \in [1,+\infty)\), we say that \(M\) has the \emph{Gagliardo--Nirenberg property}~\citelist{\cite{gagliardo}\cite{nirenberg}}
if there exists a constant \(C >0\) such that for every \(v\in \dot{W}^{k,p}(M, \R) \cap L^\infty(M,\R)\) and for every \(j \in \{1,\dots,k-1\}\),
\begin{equation}\label{GN}
\int_M \abs{D^j_K v}^\frac{kp}{j}_{(\otimes^j g^*_M) \otimes g_1} 
\le C \, \mathrm{osc}(v)^{\frac{k-j}{j} p} \int_M \abs{D^k_K v}_{(\otimes^k g^*_M)\otimes g_1}^p. 
\end{equation}

\begin{propo}\label{iotaCompact}
Let \(\iota \in C^k(N, \R^\nu)\) be an isometric embedding and let \(p \in [1,+\infty)\). 
If \(M\) has the Gagliardo--Nirenberg property and if \(N\) is compact, then 
\[
\dot{W}^{k,p}_\iota (M,N) = \bigcap_{j=1}^k \dot{W}^{j,\frac{kp}{j}}(M,N). 
\]
\end{propo}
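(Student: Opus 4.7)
The plan is to establish both inclusions by relating the intrinsic covariant derivatives $D^j_K u$ to the Euclidean derivatives $D^j(\iota \circ u)$ through a Fa\`a di Bruno type expansion adapted to the isometric embedding. The key identity, obtained by iterating the chain rule for $\iota \circ u$ and exploiting the fact that $\iota$ is an isometric immersion, is
\[
D^j(\iota \circ u) = d\iota \circ D^j_K u + R_j\bigl(D^1_K u, \dotsc, D^{j-1}_K u\bigr),
\]
where $R_j$ is a polynomial expression whose terms all have the form $|D^{j_1}_K u|_{(\otimes^{j_1} g^*_M) \otimes g_N} \dotsm |D^{j_r}_K u|_{(\otimes^{j_r} g^*_M) \otimes g_N}$ with $j_1+\dotsb+j_r = j$ and each $j_i \ge 1$, and whose coefficients depend on the derivatives of $\iota$ up to order $j$ (equivalently, on the second fundamental form of $\iota(N) \subseteq \R^\nu$ and its covariant derivatives). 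Since $N$ is compact and $\iota \in C^k$, these coefficients are uniformly bounded.

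For the inclusion $\dot{W}^{k,p}_\iota(M,N) \subseteq \bigcap_{j=1}^k \dot{W}^{j,kp/j}(M,N)$: if $u \in \dot{W}^{k,p}_\iota(M,N)$, then since $N$ is compact the map $\iota \circ u$ is bounded, so $\iota \circ u \in W^{k,p}(M,\R^\nu) \cap L^\infty$. The Gagliardo--Nirenberg property applied to each component yields $|D^j(\iota \circ u)| \in L^{kp/j}(M)$ for $j \in \{1, \dotsc, k\}$. Since $\iota$ is isometric, $|Tu|_{g^*_M \otimes g_N} = |T(\iota \circ u)|$ gives the base case $j=1$. For $j \ge 2$, inverting the key identity and using that $d\iota$ has a uniformly bounded left inverse, one obtains
\[
\abs{D^j_K u}_{(\otimes^j g^*_M)\otimes g_N} \lesssim \abs{D^j(\iota\circ u)} + \sum_{j_1+\dotsb+j_r=j,\, j_i<j} \abs{D^{j_1}_K u} \dotsm \abs{D^{j_r}_K u}.
\]
A direct induction on $j$ combined with H\"older's inequality (with exponents $kp/j_i$ satisfying $\sum j_i/(kp) = j/(kp)$) gives $|D^j_K u| \in L^{kp/j}(M)$.

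For the reverse inclusion: let $u \in \bigcap_{j=1}^k \dot{W}^{j,kp/j}(M,N)$. To obtain $\iota \circ u \in W^{k,1}_\mathrm{loc}(M,\R^\nu)$, I apply proposition~\ref{kChainRule} to each component $\iota^i \in C^k(N,\R) = C^k_c(N,\R)$ (using compactness of $N$). Its hypothesis is that $u$ is $k$ times colocally weakly differentiable (which is the case since $u \in \dot{W}^{k,p}(M,N)$) and that $\int_{u^{-1}(L)\cap K} \double{T^\ell u} < +\infty$ for each $\ell \in \{2,\dotsc,k\}$ and every compact $K \subseteq M$, $L \subseteq N$. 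The maximality property of the $\ell^{\text{tuple}}$ norm and the local expression of $T^\ell u$ via the covariant derivatives (proposition~\ref{covariantTkU}) give a local bound $\double{T^\ell u} \lesssim \sum_{j_1+\dotsb+j_r=\ell} |D^{j_1}_K u| \dotsm |D^{j_r}_K u|$; by H\"older these products lie in $L^{kp/\ell}_\mathrm{loc} \subseteq L^1_\mathrm{loc}$, so the integrability condition holds. Finally, the Fa\`a di Bruno identity applied to $\iota$ together with the same H\"older calculation with $\sum j_i/(kp) = 1/p$ yields $|D^k(\iota \circ u)| \in L^p(M)$.

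The main obstacle is setting up the Fa\`a di Bruno expansion cleanly in this intrinsic/extrinsic setting: the derivatives $D^j_K u$ are taken with respect to the Levi--Civita connection on $N$, while $D^j(\iota \circ u)$ lives in $\R^\nu$, so the remainder $R_j$ genuinely involves the extrinsic geometry of $\iota(N)$. Compactness of $N$ is essential to obtain uniform bounds on these geometric coefficients and to compactify supports when invoking proposition~\ref{kChainRule}; without it, one would need to localize with a partition of unity and track the supports carefully.
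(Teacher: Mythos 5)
Your argument follows essentially the same route as the paper: for one inclusion you apply the Gagliardo--Nirenberg property to \(\iota\circ u\) and invert a chain-rule (Fa\`a di Bruno type) expansion to recover the intrinsic derivatives by induction and H\"older, and for the other you verify the integrability hypothesis of proposition~\ref{kChainRule} through the tuple-norm bound in terms of the \(D^j_K u\) and then estimate \(\abs{D^k_K(\iota\circ u)}\) using the boundedness of the derivatives of \(\iota\) on the compact manifold \(N\). The only point you gloss over is that, in the first inclusion, before writing your key identity one must know that \(u\) is \(k\) times colocally weakly differentiable so that the \(D^j_K u\) exist; this is precisely the paper's lemma~\ref{iotaKTimes} (weak differentiability of \(\iota\circ u\) implies colocal weak differentiability of \(u\)), which should be invoked at the start.
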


Without any assumptions on the manifolds, we begin by proving a lemma that concerns the notion of \(k\) times colocally weakly differentiable maps. 
\begin{lemme}\label{iotaKTimes}
Let \(\iota \in C^k(N, \R^\nu)\) be an isometric embedding and let \(u \colon M \to N\) be a measurable map. 
If the map \(\iota \circ u\) is \(k\) times weakly differentiable, then the map \(u\) is \(k\) times colocally weakly differentiable. 
\end{lemme}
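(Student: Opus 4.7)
I would prove the lemma by induction on \(k\). The base case \(k = 1\) is the first-order statement that weak differentiability of \(\iota \circ u\) implies colocal weak differentiability of \(u\), which is part of the equivalence established in~\cite{cvs}; I would invoke it directly.

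For the inductive step at order \(k \geq 2\), suppose the lemma is known up to order \(k-1\). The hypothesis \(\iota \circ u \in W^{k,1}_{\mathrm{loc}}(M,\R^\nu)\) in particular yields \(\iota \circ u \in W^{k-1,1}_{\mathrm{loc}}\), so by the induction hypothesis \(u\) is already \((k-1)\) times colocally weakly differentiable. By the recursive definition of \(k\)-times colocal weak differentiability, it therefore remains to show that \(Tu \colon M \to T^*M \otimes TN\) is \((k-1)\) times colocally weakly differentiable.

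The key idea is to re-apply the induction hypothesis one level up the tangent tower, with the embedding \(T\iota \colon TN \to T\R^\nu \simeq \R^\nu \times \R^\nu\). This \(T\iota\) is a \(C^{k-1}\) embedding, and it is isometric when \(TN\) is endowed with the Sasaki metric associated with the Levi--Civita connection of \(g_N\). Working in a local chart \(\psi \colon V \subseteq M \to \R^m\), the restriction \(Tu|_V\) is equivalent data to the \(m\)-tuple of tangent-vector-valued coordinate derivatives \(\partial_i(u \circ \psi^{-1}) \colon \psi(V) \to TN\). Since \(T\iota \circ \partial_i(u \circ \psi^{-1}) = \partial_i(\iota \circ u \circ \psi^{-1})\) is \((k-1)\) times weakly differentiable by assumption, the inductive lemma applied with target \(TN\) and embedding \(T\iota\) gives that each \(\partial_i(u \circ \psi^{-1})\) is \((k-1)\) times colocally weakly differentiable. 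A standard decomposition of test functions on \(T^*M \otimes TN\) over the chart into test functions on each \(TN\)-component then promotes this to \((k-1)\)-times colocal weak differentiability of \(Tu|_V\); covering \(M\) by countably many such charts completes the induction.

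The main obstacle I anticipate is not any deep estimate but rather the careful local bundle identification: one must verify that \(Tu|_V\), viewed as an \(m\)-tuple of \(TN\)-valued maps, preserves colocal weak differentiability in both directions, and that \(T\iota\) genuinely fits the lemma's hypotheses as a \(C^{k-1}\) isometric embedding for the Sasaki metric. Once these translations are made precise, every remaining step is formal.
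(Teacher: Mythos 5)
Your overall strategy---induct on \(k\), pass to a chart, view \(Tu\) through its components \(Tu[\partial_i]\colon V\to TN\), and reapply the lemma one level up the tangent tower with the embedding \(T\iota\colon TN\to T\R^\nu\)---is genuinely different from the paper's argument, but as written it has two gaps. First, \(T\iota\) is \emph{not} an isometric embedding of \(TN\) with the Sasaki metric into \(\R^{2\nu}\): by the Gauss formula, at a point \(v \in TN\) one has \(\abs{T(T\iota)[\xi]}^2 = G^S(\xi,\xi)+\abs{A_\iota(T\pi_N \xi, v)}^2\), where \(A_\iota\) is the second fundamental form of \(\iota\), so isometry fails unless \(\iota(N)\) is totally geodesic (it already fails for \(\mathbb{S}^1\subset\R^2\)). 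This is repairable, because colocal weak differentiability never sees the metric: either run the induction for arbitrary \(C^{k-1}\) embeddings, or endow \(TN\) with the pullback metric \((T\iota)^*g_{2\nu}\); but the claim you propose to ``verify'' is false as stated.

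The substantive gap is the final step, which you dismiss as a ``standard decomposition of test functions''. Knowing that each component \(v_i=Tu[\partial_i]\colon V\to TN\) is \((k-1)\) times colocally weakly differentiable does not formally yield that \(Tu|_{V}\), i.e.\ the joint map into the fiber product \(TN\times_N\dotsb\times_N TN\cong (T^*M\otimes TN)|_{V\times N}\), is \((k-1)\) times colocally weakly differentiable: a test function on \(T^*M\otimes TN\) depends jointly on all the components, while your hypothesis only controls compositions with test functions of each \(v_i\) separately. Closing this requires, at every level of the induction, a genuine product/fiber-product lemma: for instance, approximate a joint test function in \(C^1\) by finite sums of tensor products with controlled supports, use that each colocal derivative \(Tv_i\) is integrable on the sets where \(v_i\) stays in a compact subset, and extend test functions from the fiber product to the product (itself a tubular-neighborhood argument); none of this is quotable from~\cite{cvs}, and it is where the real work of the lemma would sit in your scheme. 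The paper avoids the issue entirely: it iterates once over \(j\), uses that \(T^j\iota(T^jN)\) admits a tubular neighborhood in \(T^j\R^\nu\) to extend any \(f\in C^1_c(\Mor(T^jM,T^jN),\R)\) to \(\tilde f\) with \(f\circ T^j u=\tilde f\circ T^j(\iota\circ u)\), and deduces colocal weak differentiability of \(T^j u\) directly from the weak differentiability of \(T^j(\iota\circ u)\), carrying along the identity \(T^{j+1}(\iota\circ u)=T^{j+1}\iota\circ T^{j+1}u\) to continue. If you want to keep the chart-and-components route, you must state and prove the fiber-product gluing lemma at each order; otherwise the tubular-neighborhood extension handles the morphism-valued map in one stroke.
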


In contrast with proposition~\ref{equivalentChainRule}, the hypotheses involve only a single embedding of class \(C^k\). 

\begin{proof}[Proof of lemma~\ref{iotaKTimes}]
If the map \(\iota \circ u\) is weakly differentiable, then \(u\) is colocally weakly differentiable and 
\(T(\iota \circ u) = T\iota \circ T u\) almost everywhere in \(M\)~\cite{cvs}*{proposition 1.9}. 
We then proceed by iteration. Let \(j\in \{1,\dots,k-1\}\) and let \(f\in C^1_c(\Mor(T^j M, T^j N),\R)\). 
Since \(\iota\) is an embedding, \(T^j \iota \colon T^j N \to T^j \R^\nu\) is also an embedding. 
Hence, \(T^j \iota(T^j N)\) has a tubular neighborhood in \(T^j \R^\nu\): there exists a vector bundle \((E,\pi_{T^j N}, T^j N)\) and an embedding \(\Tilde{\iota} : E \to T^j \R^\nu\) such that \(\Tilde{\iota}_{\arrowvert T^j N} = T^j \iota\) and \(\Tilde{\iota}(E)\) is open in \(T^j \R^\nu\)~\cite{hirsch}*{theorem 4.5.2}. 
Thanks to the tubular neighborhood, there exists a map \(\Tilde{f} \in C^1_c(\Mor(T^j M, T^j \R^\nu),\R)\) such that \(\Tilde{f} \circ T^j \iota = f\) on \(\Mor(T^j M,T^j N)\) (see for example proof of proposition 1.9 in~\cite{cvs}). In particular, almost everywhere in \(M\)
\[
f \circ T^j u = \Tilde{f} \circ T^j \iota \circ T^j u = \Tilde{f} \circ T^j(\iota \circ u).
\]
Since \(T^j(\iota \circ u)\) is weakly differentiable, the map \(T^j(\iota \circ u)\) is colocally weakly differentiable and so \(f \circ T^j u\) is weakly differentiable. Finally, since \(T^j \iota \circ T^j u\) is weakly differentiable, \(T^{j+1}(\iota \circ u) = T^{j+1} \iota \circ T^{j+1} u\) almost everywhere in \(M\)~\cite{cvs}*{proposition 1.5}. 
\end{proof}

\begin{proof}[Proof of proposition~\ref{iotaCompact}]
\resetconstant
On the one hand, let \(u \in \dot{W}^{k,p}_\iota(M,N)\). 
By lemma~\ref{iotaKTimes}, the map \(u\) is \(k\) times colocally weakly differentiable. 
For every \(j\in \{2,\dots,k\}\), since \(T^j(\iota \circ u) = T^j \iota\circ T^j u\) almost everywhere in \(M\), there exists a constant \(\Cl{ixaye}>0\) such that 
\begin{align*}
\abs{D^j_K u}_{(\otimes^j g^*_M) \otimes g_N}
& = \abs{D\iota(u)[D^j_K u]}_{(\otimes^j g^*_M) \otimes g_\nu} \\
& \le \abs{D^j_K(\iota \circ u)}_{(\otimes^j g^*_M) \otimes g_\nu} \\
& \hspace{10mm} + \Cr{ixaye} \sum_{\ell=2}^{j} \sum_{i=1}^{j-l+1}\abs{D^\ell_K \iota(u)}_{(\otimes^\ell g^*_N) \otimes g_\nu}  
\abs{D^i_K u}^\frac{j}{i}_{(\otimes^i g^*_M) \otimes g_N}
\end{align*}
almost everywhere in \(M\). Since \(\iota \circ u \in \dot{W}^{k,p}(M,\R^\nu) \cap L^\infty(M,\R^\nu)\), by the Gagliardo--Nirenberg property~\eqref{GN}, 
for every \(j \in \{1,\dots,k\}\), \(\abs{D^j_K(\iota \circ u)}_{(\otimes^j g^*_M) \otimes g_\nu} \in L^\frac{kp}{j}(M)\). 
As a consequence, by iteration and the previous inequality, \(u\in \bigcap_{j=1}^k \dot{W}^{j,\frac{kp}{j}}(M,N)\). 

On the other hand, let \(u \in \bigcap_{j=1}^k \dot{W}^{j,\frac{kp}{j}}(M,N)\). 
By definition~\ref{defkTupleNorm} of \(k^\textrm{tuple}\) norms and by Young's inequality, for every \(j \in \{2,\dots,k\}\) and every compact subset \(K \subseteq M\), there exists a constant \(\Cl{beta}>0\) such that 
\[
\double{T^j u} \le \Cr{beta} \Bigl( \sum_{\ell=1}^k \abs{D^\ell_K u}_{(\otimes^\ell g^*_M) \otimes g_N}^\frac{k}{\ell} +1 \Bigr)
\] 
almost everywhere in \(K\) and so \(\double{T^j u} \in L^1_\mathrm{loc}(M)\). Since the manifold \(N\) is compact by assumption, by the chain rule for higher-order colocally weakly differentiable maps (proposition~\ref{kChainRule}), the map \(\iota \circ u\) is \(k\) times weakly differentiable. 
Moreover, there exists a constant \(\Cl{pyejp} >0\) such that 
\[
\abs{D^k_K(\iota \circ u)}_{(\otimes^k g^*_M) \otimes g_\nu} \le 
\Cr{pyejp} \sum_{\ell=1}^{k} \sum_{i=1}^{k-l+1}\abs{D^\ell_K \iota(u)}_{(\otimes^\ell g^*_N) \otimes g_\nu}  
\abs{D^i_K u}^\frac{k}{i}_{(\otimes^i g^*_M) \otimes g_N}
\]
almost everywhere in \(M\) and since \(N\) is compact, \(\iota \circ u \in \dot{W}^{k,p}(M,\R^\nu)\). 
\end{proof} 

Without the compactness and Gagliardo--Nirenberg assumption, proposition~\ref{iotaCompact} is not true. Indeed, the embedded space in the intersection already fails for classical Sobolev maps between Euclidean spaces. 
However, there exists an embedding such one inclusion always occurs.

\begin{propo}\label{particularIotaGeneral}
Let \(p\in [1,+\infty)\). If either \(k=2\) or \(M\) has the Gagliardo--Nirenberg property, then there exists an isometric embedding \(\iota \in C^k(N, \R^\nu)\) such that
\[
\dot{W}^{k,p}_\iota(M,N) \subseteq \bigcap_{j=1}^k \dot{W}^{j,\frac{k p}{j}}(M,N). 
\]
\end{propo}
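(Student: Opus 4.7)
The plan is to exhibit an isometric embedding $\iota \in C^k(N,\R^\nu)$ with two key features---$\iota(N)$ bounded in $\R^\nu$ and every derivative $D^\ell \iota$ uniformly bounded on $N$ for $\ell = 1, \dots, k$---and then to follow the strategy of the proof of Proposition~\ref{iotaCompact}, with the boundedness of $\iota(N)$ replacing the compactness of $N$ as the mechanism feeding a Gagliardo--Nirenberg interpolation.

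To construct $\iota$, I would begin with any Nash isometric embedding $\iota_0 \in C^k(N,\R^{\nu_0})$ and post-compose with an isometric ``coiling'' $\phi \colon \R^{\nu_0} \to \R^\nu$. On each one-dimensional factor the model is the map
\[
s \longmapsto \tfrac{1}{\sqrt{\alpha^2+\beta^2}} \bigl( \cos(\alpha s), \sin(\alpha s), \cos(\beta s), \sin(\beta s) \bigr),
\]
with $\alpha/\beta$ irrational, which is an injective $C^\infty$ isometric embedding of $\R$ into a torus with bounded image and uniformly bounded derivatives of every order. Taking the Cartesian product of such coilings yields an isometric $\phi \colon \R^{\nu_0} \to \R^{4\nu_0}$ with bounded image and bounded higher derivatives, and a sharper version of Nash's iteration (controlling the higher derivatives of $\iota_0$ uniformly on $N$) then produces $\iota = \phi \circ \iota_0$ with all of the required bounds.

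Given such an $\iota$, let $u \in \dot{W}^{k,p}_\iota(M,N)$. By Lemma~\ref{iotaKTimes} the map $u$ is $k$ times colocally weakly differentiable and the chain rule $T^j(\iota \circ u) = T^j \iota \circ T^j u$ holds for every $j$. The boundedness of $\iota(N)$ gives $\iota \circ u \in L^\infty(M,\R^\nu)$, so when $k \ge 3$, applying the assumed Gagliardo--Nirenberg property~\eqref{GN} componentwise to $\iota \circ u$ yields $\abs{D^j(\iota\circ u)}_{(\otimes^j g^*_M)\otimes g_\nu} \in L^{kp/j}(M)$ for $j = 1, \dots, k-1$; when $k = 2$, the same conclusion follows from a direct integration-by-parts proof of the classical inequality $\int_M \abs{Dv}^{2p} \le C\,\mathrm{osc}(v)^p \int_M \abs{D^2 v}^p$, which requires no extra hypothesis on $M$.

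Finally, expanding the chain rule as a Fa\`a di Bruno formula in covariant form and using both the isometric property $\abs{D\iota(u)[\xi]}_{g_\nu} = \abs{\xi}_{g_N}$ and the uniform bound on $\abs{D^\ell \iota}$, one derives the pointwise estimate
\[
\abs{D^j_K u}_{(\otimes^j g^*_M)\otimes g_N}
\le \abs{D^j(\iota\circ u)}_{(\otimes^j g^*_M)\otimes g_\nu}
+ C \sum_{\ell=2}^{j} \sum_{i_1 + \cdots + i_\ell = j} \abs{D^{i_1}_K u} \cdots \abs{D^{i_\ell}_K u},
\]
and a H\"older-plus-induction argument on $j$, with base case $\abs{Tu} = \abs{D(\iota \circ u)} \in L^{kp}(M)$, then delivers $\abs{D^j_K u}_{(\otimes^j g^*_M)\otimes g_N} \in L^{kp/j}(M)$ for every $j = 1, \dots, k$. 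The main obstacle is the first step: the coiling idea is transparent one coordinate at a time, but lifting it through a Nash-type construction so as to control simultaneously the boundedness of $\iota(N)$ and the uniform bounds on all of $\abs{D^\ell \iota}$ over a possibly non-compact $N$ with unbounded curvature is the delicate point.
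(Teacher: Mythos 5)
There is a genuine gap, and it sits exactly at the point where the proposition is delicate: the \(k=2\) branch. You dispose of it by asserting that the inequality \(\int_M \abs{Dv}^{2p} \le C\,\mathrm{osc}(v)^p \int_M \abs{D^2 v}^p\) holds on every \(M\) ``by a direct integration-by-parts proof, which requires no extra hypothesis on \(M\)''. That inequality is precisely the Gagliardo--Nirenberg property~\eqref{GN} for \(k=2\), which the statement deliberately does \emph{not} assume in this case; if it held for arbitrary \(M\), the dichotomy ``either \(k=2\) or \(M\) has the Gagliardo--Nirenberg property'' would be vacuous, and proposition~\ref{iotaCompact} would not need it as a hypothesis even for \(k=2\). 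It is in fact false in general: integration by parts produces boundary terms, and for \(M\) a bounded domain with an irregular (e.g.\ cusp) boundary one does not have \(\dot{W}^{2,p}\cap L^\infty\subseteq \dot{W}^{1,2p}\); this is exactly why the paper promotes~\eqref{GN} to a named hypothesis and poses open question~\ref{openQuestionGN}. The paper's proof of the \(k=2\) case avoids interpolation altogether: it takes \(\iota=\iota_2\circ\iota_1\) with \(\iota_1\) a Nash embedding and \(\iota_2\) a coiling chosen so that the second fundamental form satisfies \(\abs{v}_{g_N}^2\le C\,\abs{A_\iota[v,v]}_{g_{3\nu}}\) for all \(v\in TN\); then the orthogonal decomposition \(\abs{D^2_K(\iota\circ u)[e_1,e_2]}^2=\abs{D^2_K u[e_1,e_2]}^2+\abs{A_\iota[Tu[e_1],Tu[e_2]]}^2\) gives the \emph{pointwise} bound \(\abs{Tu}^2_{g^*_M\otimes g_N}\le C\,\abs{D^2_K(\iota\circ u)}_{g^*_M\otimes g^*_M\otimes g_{3\nu}}\), so \(\abs{Tu}\in L^{2p}\) follows directly from \(\abs{D^2(\iota\circ u)}\in L^p\), with no hypothesis on \(M\) whatsoever. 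You have the coiling map in hand, but you use it only to make the image bounded, which is not where its power lies for this statement.

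Two secondary points. First, in the Gagliardo--Nirenberg branch the paper needs only an isometric embedding of class \(C^k\) with bounded image, which Nash provides; you additionally demand uniform bounds on \(\abs{D^\ell\iota}\) for \(\ell\le k\) over a possibly non-compact \(N\), concede that constructing such an \(\iota\) is ``the delicate point'', and leave it unproved, so that branch of your argument is also incomplete as written. Second, your one-dimensional coiling \(s\mapsto(\alpha^2+\beta^2)^{-1/2}(\cos\alpha s,\sin\alpha s,\cos\beta s,\sin\beta s)\) with \(\alpha/\beta\) irrational is an injective isometric \emph{immersion} whose image is dense in a torus; it is not a homeomorphism onto its image, hence not an embedding. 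The paper's \(\iota_2\) retains the linear components \(\lambda t_i\) (with \(\lambda^2+\gamma^2\mu^2=1\)) precisely so that the composition remains a proper isometric embedding.
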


\begin{proof}
\resetconstant
If \(M\) has the Gagliardo--Nirenberg property, for every \(u \in \dot{W}^{k,p}_\iota(M,N)\), by lemma~\ref{iotaKTimes}, the map \(u\) is \(k\) times colocally weakly differentiable. 
If \(\iota \in C^k(N,\R^\nu)\) is an isometric embedding such that \(\iota(N)\) is bounded, then \(\iota \circ u \in \dot{W}^{k,p}(M,\R^\nu)\cap L^\infty(M,\R^\nu)\) and so \(u \in \bigcap_{j=1}^k \dot{W}^\frac{k p}{j}(M,N)\) (as in the first part of the proof of proposition~\ref{iotaCompact}). 
For example, by the Nash embedding theorem~\citelist{\cite{nash54}\cite{nash56}}, such an embedding \(\iota\) always exists. 

If \(k=2\), let \(\iota_1 \in C^2(N, \R^\nu)\) be an isometric embedding given by Nash embedding theorem~\citelist{\cite{nash54}\cite{nash56}}. 
We define an isometric embedding \(\iota_2 \colon \R^\nu \to \R^{3 \nu}\) for every \(t\in \R^\nu\) by 
\[
\iota_2(t) = \left(\lambda t_1 , \gamma \cos\left(\mu t_1\right), \gamma \sin\left( \mu t_1 \right), \dotsc, \lambda t_\nu , \gamma \cos\left(\mu t_\nu\right), \gamma \sin\left( \mu t_\nu \right)\right)
\]
with \(\lambda, \gamma, \mu \in \R\) such that \(\lambda^2 + \gamma^2 \mu^2 =1\). 
Then \(\iota = \iota_2 \circ \iota_1 \colon N \to \R^{3\nu}\) is an isometric embedding and the second fundamental form \(A_\iota \colon TN \times TN \to \R^{3\nu}\) of \(\iota\)~\cite{docarmo}*{\S 6.2} satisfies for every \(v_1,v_2\in TN\), 
\begin{align*}
\abs{A_\iota [v_1,v_2]}^2 
& = \abs{A_{\iota_1}[v_1,v_2]}^2 + \abs{A_{\iota_2}[D\iota_1(v_1),D\iota_1(v_2)]}^2 \\
& \ge \abs{A_{\iota_2}[D\iota_1(v_1),D\iota_1(v_2)]}^2 
= \gamma^2 \mu^4 \sum_{i=1}^\nu (\psh{D\iota_1(v_1)}{e_i})^2 (\psh{D\iota_1(v_2)}{e_i})^2,
\end{align*}
where \((e_i)_{1\le i\le\nu}\) is the canonical basis of \(\R^\nu\).
So there exists \(\Cl{mnxjb} >0\) such that for every \(v\in TN\), 
\[
\abs{v}_{g_N}^2 = \abs{D\iota_1(v)}_{g_\nu}^2 \le \Cr{mnxjb} \abs{A_\iota [v,v]}_{g_{3\nu}}. 
\]

Finally, for every \(u \in \dot{W}^{2,p}_\iota(M,N)\), since \(\iota\) is an isometric embedding, by orthogonal decomposition of \(T\R^{3\nu}\) into \(TN\) and its orthogonal complement in \(T\R^{3\nu}\)~\cite{docarmo}*{\S 6.2}, for almost every \(x\in M\) and every \(e_1,e_2 \in T_x M\),
\[ 
\abs{D^2_K (\iota \circ u)(x)[e_1,e_2]}_{g_{3\nu}}^2 
= \abs{D_K^2 u(x)[e_1,e_2]}_{g_N}^2 + \abs{A_{\iota}[Tu(x)[e_1], Tu(x)[e_2]]}_{g_{3\nu}}^2.
\]
Consequently, \(u \in \dot{W}^{2,p}(M,N)\) and since for almost every \(x\in M\),
\begin{align*}
\abs{D^2_K(\iota \circ u)(x)}_{g_M^* \otimes g^*_M \otimes g_{3\nu}}
& \ge \sum_{1\le i,j\le m} \abs{A_\iota[Tu(x)[e_i], Tu(x)[e_j]]}_{g_{3\nu}} \\ 
& \ge \sum_{1\le i\le m} \abs{A_\iota[Tu(x)[e_i], Tu(x)[e_i]]}_{g_{3\nu}} \\ 
& \ge  \sum_{1\le i\le m} \frac{1}{\Cr{mnxjb}} \abs{Tu(x)[e_i]}_{g_N}^2
 = \frac{1}{\Cr{mnxjb}} \abs{Tu(x)}_{g^*_M \otimes g_N}^2, 
\end{align*}
where \((e_i)_{1\le i \le m}\) is an orthonormal basis in \(\pi_M^{-1}(\{x\})\), \(u \in \dot{W}^{1,2p}(M,N)\). 
\end{proof}

It turns out that the definition by embedding~\eqref{Wkp} and the intrinsic one may be different. 
In general, we do not even know if one or another inclusion does occur, except in the particular case of \(k=2\). 

\begin{propo}\label{inclusionEmbedding}
Let \(\iota \in C^2(N, \R^\nu)\) be an isometric embedding and let \(p \in [1,+\infty)\).
Then 
\[
\dot{W}^{2,p}_\iota (M,N) \subseteq \dot{W}^{2,p}(M,N).
\]
\end{propo}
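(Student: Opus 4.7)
The plan is to take a map \(u \in \dot{W}^{2,p}_\iota(M,N)\), show that it is twice colocally weakly differentiable, and then use the isometry of \(\iota\) together with the standard tangential/normal splitting to dominate \(\abs{D_K(Tu)}_{g^*_M\otimes g^*_M\otimes g_N}\) pointwise by \(\abs{D^2(\iota\circ u)}_{g^*_M\otimes g^*_M\otimes g_\nu}\).

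First, by definition of \(\dot{W}^{2,p}_\iota(M,N)\), the map \(\iota\circ u\) belongs to \(W^{2,1}_{\mathrm{loc}}(M,\R^\nu)\) with \(\abs{D^2(\iota\circ u)}_{g^*_M\otimes g^*_M\otimes g_\nu} \in L^p(M)\). Applying lemma~\ref{iotaKTimes} with \(k=2\), we obtain that \(u\) is twice colocally weakly differentiable and that \(T^2(\iota\circ u) = T^2\iota \circ T^2 u\) almost everywhere in \(M\). Composing with the connection \(K\) on \(T^*M\otimes T\R^\nu\) (which is flat, so the covariant derivative coincides with the ordinary second derivative \(D^2\)) and using the chain-rule identity \(D_K^2 u = K \circ T^2 u\) from proposition~\ref{covariantTTU}, this gives almost everywhere in \(M\) an identity of the form
\[
D^2_K(\iota\circ u) = D\iota(u)\circ D^2_K u + A_\iota(u)\bigl[Tu,Tu\bigr],
\]
where \(A_\iota\) denotes the second fundamental form of the embedded submanifold \(\iota(N)\subseteq \R^\nu\).

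Since \(\iota\) is an \emph{isometric} embedding, at every point \(y\in N\) the image \(D\iota(y)[T_yN]\subseteq T_{\iota(y)}\R^\nu\) is orthogonal to the image of \(A_\iota(y)\), and \(D\iota(y)\) is a linear isometry onto its image. Hence, at almost every \(x\in M\) and for every pair \(e_1,e_2\in T_xM\), the orthogonal decomposition
\[
\bigabs{D^2_K(\iota\circ u)(x)[e_1,e_2]}^2_{g_\nu}
= \bigabs{D^2_K u(x)[e_1,e_2]}^2_{g_N}
+ \bigabs{A_\iota\bigl[Tu(x)[e_1], Tu(x)[e_2]\bigr]}^2_{g_\nu}
\]
holds, exactly as in the proof of proposition~\ref{particularIotaGeneral}. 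Summing over an orthonormal basis of \(T_xM\) in the sense of the metric \(g^*_M\otimes g^*_M\otimes g_N\) yields the pointwise bound
\[
\abs{D_K(Tu)}_{g^*_M\otimes g^*_M\otimes g_N} \le \abs{D^2(\iota\circ u)}_{g^*_M\otimes g^*_M\otimes g_\nu}
\]
almost everywhere in \(M\), and integration of the \(p\)-th power over \(M\) gives \(\abs{D_K(Tu)}_{g^*_M\otimes g^*_M\otimes g_N} \in L^p(M)\). Combined with the twice colocal weak differentiability from the first step, this shows \(u \in \dot{W}^{2,p}(M,N)\).

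The only genuinely delicate step is identifying the colocal weak second derivative of \(\iota\circ u\) computed intrinsically through \(T^2u\) with the classical weak second derivative in \(\R^\nu\), so that the Gauss-type decomposition into tangential and normal parts is available almost everywhere. This is precisely the content of the chain-rule identity \(T^2(\iota\circ u)=T^2\iota\circ T^2u\) provided by lemma~\ref{iotaKTimes}; once that is in hand the remainder is a pointwise linear-algebraic estimate.
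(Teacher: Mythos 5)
Your proposal is correct and follows essentially the same route as the paper: lemma~\ref{iotaKTimes} gives the twice colocal weak differentiability together with the chain rule \(T^2(\iota\circ u)=T^2\iota\circ T^2u\), and the orthogonal (Gauss) decomposition of \(T\R^\nu\) into the tangent and normal parts of \(\iota(N)\) yields the pointwise bound \(\abs{D_K^2 u}_{g^*_M\otimes g^*_M\otimes g_N}\le\abs{D^2_K(\iota\circ u)}_{g^*_M\otimes g^*_M\otimes g_\nu}\), which is exactly the paper's argument, only written out in more detail.
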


\begin{proof}
For every \(u \in \dot{W}^{2,p}_\iota (M,N)\), by lemma~\ref{iotaKTimes}, \(u\) is twice colocally weakly differentiable. 
Since \(\iota\) is an isometric embedding, by orthogonal decomposition of \(T\R^\nu\) into \(TN\) and its orthogonal complement in \(T\R^\nu\)~\cite{docarmo}*{\S 6.2}, 
\(\abs{D_K^2 u}_{g^*_M\otimes g^*_M \otimes g_N} \le \abs{D^2_K(\iota \circ u)}_{g^*_M\otimes g^*_M \otimes g_\nu}\) almost everywhere in \(M\), and so \(u \in \dot{W}^{2,p}(M,N)\). 
\end{proof}

\subsection{Gagliardo--Nirenberg property}

In view of proposition~\ref{iotaCompact}, if the target manifold is compact, we may ask if there exist some Gagliardo--Nirenberg inequalities~\citelist{\cite{gagliardo}\cite{nirenberg}} for the spaces \(\dot{W}^{k,p}(M,N)\) that can lead to 
\[
\dot{W}^{k,p}_\iota(M,N) = \dot{W}^{k,p}(M,N). 
\] 
In general, even if the target manifold is compact, there are no such inequalities. 

A first striking fact is that the second-order energy can vanish for a nontrivial map.
\begin{propo}\label{sobolevNoGN}
Let \(p\in [1,+\infty)\). 
If \(N\) has at least one nontrivial closed geodesic, then there exists \(u \in \dot{W}^{k,p}(\mathbb{S}^1,N) \cap L^\infty(\mathbb{S}^1,N)\) such that \(\abs{D_K^k u}_{(\otimes^k g^*_{\mathbb{S}^1}) \otimes g_N} = 0\) almost everywhere in \(M\) but \(\int_{\mathbb{S}^1} \abs{Tu}^{k p}_{g^*_{\mathbb{S}^1} \otimes g_N} > 0\). 
\end{propo}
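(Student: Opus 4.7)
The strategy is to exhibit \(u\) as (a reparametrization of) a nontrivial closed geodesic. Recall that a smooth curve \(\gamma \colon \R \to N\) is a geodesic for the Levi--Civita connection precisely when \(D_K(T\gamma) = 0\) along \(\gamma\); in particular \(\abs{T\gamma}_{g_N}\) is constant along \(\gamma\). A nontrivial closed geodesic is such a \(\gamma\) which is \(L\)-periodic for some \(L > 0\) and is not constant, so that \(\abs{T\gamma}_{g_N}\) is a positive constant \(c\).

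The plan is as follows. First, let \(\gamma \colon \R \to N\) be a nontrivial closed geodesic of period \(L\) provided by the assumption, and define \(u \colon \mathbb{S}^1 \to N\) by \(u(e^{i\theta}) = \gamma(\tfrac{L \theta}{2\pi})\); this is well defined and of class \(C^\infty\) since \(\gamma\) is smooth and \(L\)-periodic. In particular \(u\) is \(k\) times colocally weakly differentiable and, \(u(\mathbb{S}^1)\) being compact in \(N\), the map \(u\) belongs to \(L^\infty(\mathbb{S}^1, N)\).

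Next, I would compute the derivatives. By the chain rule for smooth maps, \(Tu\) is a smooth section with \(\abs{Tu(e^{i\theta})}_{g^*_{\mathbb{S}^1} \otimes g_N}\) equal to a positive constant \(\tilde c\) (proportional to \(cL/(2\pi)\)) on \(\mathbb{S}^1\). Hence
\[
\int_{\mathbb{S}^1} \abs{Tu}_{g^*_{\mathbb{S}^1} \otimes g_N}^{kp} = \tilde c^{\,kp}\, \mathcal{L}^1(\mathbb{S}^1) > 0.
\]
On the other hand, because \(\gamma\) is a geodesic, \(D_K(Tu) = 0\) almost everywhere on \(\mathbb{S}^1\) (this is simply the geodesic equation transported through the smooth reparametrization, using the formula \(D_K^2 u = K_{T^*M \otimes TN} \circ T^2 u\) from proposition~\ref{covariantTTU} and the fact that the Levi--Civita covariant acceleration of a reparametrized closed geodesic vanishes). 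By the inductive formula \(D_K^{j+1} u = K \circ T D_K^{j} u\) of proposition~\ref{covariantTkU}, applied iteratively starting from \(D_K^2 u \equiv 0\), we deduce by induction on \(j \ge 2\) that \(D_K^{j} u \equiv 0\) almost everywhere, and in particular \(\abs{D_K^k u}_{(\otimes^k g^*_{\mathbb{S}^1}) \otimes g_N} = 0\) almost everywhere. Consequently \(u \in \dot W^{k,p}(\mathbb{S}^1, N)\), and \(u\) meets every requirement of the statement.

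There is no real obstacle here: the only mildly delicate point is to make sure that the vanishing of \(D_K^2 u\) for the reparametrized geodesic propagates to \(D_K^j u\) for all \(j \ge 2\), which is handled cleanly by the identity \(D_K^{j+1} u = K \circ T D_K^{j} u\) of proposition~\ref{covariantTkU}: once a covariant derivative is identically zero, so are all higher ones. The existence of a nontrivial closed geodesic is assumed, and for concreteness one may think of the standard examples (closed geodesics on \(\mathbb{S}^n\), on flat tori, or more generally on any compact Riemannian manifold via Lyusternik--Fet).
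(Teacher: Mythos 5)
Your proposal is correct and follows essentially the same route as the paper: compose a nontrivial closed geodesic with a constant-speed parametrization of \(\mathbb{S}^1\), so that all covariant derivatives of order at least two vanish while \(\abs{Tu}_{g^*_{\mathbb{S}^1}\otimes g_N}\) is a positive constant. The only (immaterial) difference is that the paper precomposes with the degree-\(\ell\) map \(e^{i\theta}\mapsto e^{i\ell\theta}\), which makes the first-order energy \(\ell^{kp}\) arbitrarily large, whereas your degree-one parametrization already suffices for the statement.
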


In particular, since a compact manifold has always at least one nontrivial closed geodesic~\cite{klingenberg}, if \(N\) is compact, there are no Gagliardo--Nirenberg inequalities~\citelist{\cite{gagliardo}\cite{nirenberg}} for the spaces \(\dot{W}^{k,p}(\mathbb{S}^1,N)\). 

\begin{proof}[Proof of proposition~\ref{sobolevNoGN}]
Let \(\ell \in \N_*\) and let \(u_\ell \colon \mathbb{S}^1 \to \mathbb{S}^1 \subseteq \mathbb{C}\) be defined for every \(\theta \in [0,2\pi]\) by 
\(u_\ell(e^{i \theta}) = e^{i \ell \theta}\).  
Let \(\gamma \colon \mathbb{S}^1 \to N\) be a nontrivial closed geodesic. 
Since \(\gamma\) is a geodesic, we assume that for all \(y\in \mathbb{S}^1\), \(\abs{T\gamma(y)}_{g_{\mathbb{S}^1}^*\otimes g_N} = 1\). 

Then \(u = \gamma \circ u_\ell\) is \(k\) times colocally weakly differentiable, it belongs to \(L^\infty(\mathbb{S}^1,N)\) and 
\(\abs{D_K^k u}_{(\otimes^k g^*_{\mathbb{S}^1}) \otimes g_N}  = 0\) almost everywhere in \(M\) 
but 
\[
\int_{\mathbb{S}^1} \abs{T u}_{g^*_{\mathbb{S}^1} \otimes g_N}^{k p} 
= \int_{\mathbb{S}^1} \abs{T u_\ell}_{g^*_{\mathbb{S}^1} \otimes g_{\mathbb{S}^1}}^{k p} = \ell^{k p} >0. \qedhere
\]
\end{proof}

\begin{propo}\label{sobolevNoGN2}
Let \(p\in [1,+\infty)\) so that \(1\le 2p < m\) and let \(r >0\). 
If \(N\) has at least one nontrivial bounded geodesic, then there exists \(u \in \dot{W}^{2,p}(\mathbb{B}^m_r,N) \cap L^\infty(\mathbb{B}^m_r,N)\) such that \(\int_{\mathbb{B}^m_r} \abs{Tu}_{g^*_m \otimes g_N}^{2p} = +\infty\). 
\end{propo}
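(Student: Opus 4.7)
The plan is to construct $u$ as the composition $u = \gamma \circ v$, where $\gamma \colon \R \to N$ is the nontrivial bounded geodesic, parametrized by arc length so that $\abs{T\gamma}_{g_N} \equiv 1$, and $v \colon \mathbb{B}^m_r \to \R$ is a radial function with a mild singularity at the origin. Because composition with a geodesic kills its second fundamental form, one expects
\[
\abs{Tu}_{g^*_m\otimes g_N} = \abs{Dv} \qquad\text{and}\qquad \abs{D^2_K u}_{g^*_m\otimes g^*_m\otimes g_N} = \abs{D^2 v}
\]
almost everywhere on $\mathbb{B}^m_r$, which decouples the two integrabilities and allows the second-order energy to be finite while the first-order one to the power $2p$ blows up.

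Concretely, I would take $v(x) = \abs{x}^{-\alpha}$ for a parameter $\alpha > 0$ chosen below and set $u = \gamma \circ v$, extended arbitrarily at $0$. The bounded image of $\gamma$ gives directly $u \in L^\infty(\mathbb{B}^m_r, N)$. To see that $u$ is twice colocally weakly differentiable I would argue that for every $f \in C^1_c(N,\R)$ the composition $f\circ \gamma \colon \R \to \R$ is bounded and Lipschitz, with $\abs{(f\circ \gamma)'} \le \norm{Df}_\infty$; since the choice of $\alpha$ below ensures $v \in W^{2,p}_\mathrm{loc}(\mathbb{B}^m_r)$, the standard chain rule in Euclidean Sobolev spaces yields $f\circ u = (f\circ \gamma)\circ v \in W^{1,1}_\mathrm{loc}$. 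The same type of argument applied one level higher, to bundle morphisms \(h \in C^1_c(T^*M\otimes TN,\R)\) pulled back through $\gamma'$ and $v$, yields the twice colocal weak differentiability of $u$.

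The crucial computation is that, in local coordinates on $N$,
\[
(D^2_K u)^k_{ij} = \bigl[(\gamma^k)''(v) + \Gamma^k_{lm}(\gamma(v))\,(\gamma^l)'(v)(\gamma^m)'(v)\bigr]\,\partial_i v\,\partial_j v + (\gamma^k)'(v)\,\partial_i\partial_j v,
\]
and the bracketed expression vanishes by the geodesic equation for $\gamma$. Hence $D^2_K u = T\gamma(v) \otimes D^2 v$, so $\abs{D^2_K u}_{g^*_m \otimes g^*_m \otimes g_N} = \abs{D^2 v}$ pointwise on $\mathbb{B}^m_r \setminus \{0\}$. Since $\abs{Dv(x)}$ and $\abs{D^2 v(x)}$ behave like $\abs{x}^{-\alpha - 1}$ and $\abs{x}^{-\alpha - 2}$ respectively, passing to polar coordinates gives
\begin{align*}
\int_{\mathbb{B}^m_r} \abs{D^2_K u}^p_{g^*_m \otimes g^*_m \otimes g_N} &< +\infty &&\Longleftrightarrow\qquad p(\alpha+2) < m,\\
\int_{\mathbb{B}^m_r} \abs{Tu}^{2p}_{g^*_m \otimes g_N} &= +\infty &&\Longleftrightarrow\qquad 2p(\alpha+1) \ge m.
\end{align*}
Because $m > 2p$ by assumption, the interval $\alpha \in [\,\tfrac{m-2p}{2p},\, \tfrac{m-2p}{p})$ is nonempty, and any $\alpha$ in it produces the desired map $u$.

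The one technical point that requires care is the identity $D^2_K u = T\gamma(v)\otimes D^2 v$ under mere weak regularity. I would establish it classically on the open set $\mathbb{B}^m_r\setminus \{0\}$, where $v$ and hence $u$ are smooth, and then invoke the uniqueness of the colocal weak derivative in proposition~\ref{covariantTTU} together with the fact that $\{0\}$ has Lebesgue measure zero to transfer the pointwise identity to an almost-everywhere identity on $\mathbb{B}^m_r$; no further regularity of $u$ at the origin is needed since the twice colocal weak differentiability has already been secured by the Lipschitz-composition argument.
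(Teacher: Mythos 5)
Your proposal is correct and follows essentially the same route as the paper: both take $u=\gamma\circ v$ with $v(x)=\abs{x}^{-\alpha}$, use the geodesic equation (unit-speed parametrization) to reduce $\abs{D^2_K u}$ to $\abs{D^2 v}$ while $\abs{Tu}=\abs{Dv}$, and choose $\alpha$ in the nonempty range where $p(\alpha+2)<m\le 2p(\alpha+1)$, which is exactly the paper's condition $(2+\alpha)p<m<2p(\alpha+1)$ up to the harmless borderline case. Your extra details (the coordinate computation of $D^2_K u$ and the removability of the origin) only make explicit what the paper leaves implicit.
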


If \(\iota \in C^2(N,\R^\nu)\) is an isometric embedding given by proposition~\ref{particularIotaGeneral}, it follows that 
\[
\dot{W}^{2,p}_\iota(\mathbb{B}^m_r,N) \subsetneq \dot{W}^{2,p}(\mathbb{B}^m_r,N), 
\]
and if \(N\) is compact, by propositions~\ref{iotaCompact} and~\ref{inclusionEmbedding}, the strict inclusion occurs for any embedding \(\iota\). 

\begin{proof}[Proof of proposition~\ref{sobolevNoGN2}]
Let \(\alpha > 0\). 
We define the function \(v \colon \mathbb{B}^m_r \setminus \{0\} \to \R\) for every \(x\in \mathbb{B}^m_r \setminus \{0\}\) by
\(v(x)= \abs{x}^{-\alpha}.\)
Let \(\gamma \colon \R \to N\) be a nontrivial bounded geodesic.
Since \(\gamma\) is a geodesic, we assume that for all \(y\in \R\), \(\abs{T\gamma(y)}_{g_{\mathbb{S}^1}^*\otimes g_N} = 1\). 

Then \(u = \gamma \circ v\) is twice colocally weakly differentiable, \(\abs{T u}_{g^*_m \otimes g_N} = \abs{Tv}_{g^*_m \otimes g_1}\) and since \(\gamma\) is a geodesic, almost everywhere in \(\mathbb{B}^m_r \setminus \{0\}\)
\[
\abs{D^2_K u}_{g^*_m \otimes g^*_m \otimes g_N} \le \abs{D^2 v}_{g^*_m \otimes g^*_m \otimes g_1}.
\]
If \((2+\alpha)p < m < 2 p(\alpha+1)\), then \(u \in \dot{W}^{2,p}(\mathbb{B}^m_r,N)\) but \(u \notin \dot{W}^{1,2p}(\mathbb{B}^m_r,N)\).
\end{proof}

For other manifolds \(M\) and \(N\) and \(p \in [1,+\infty)\), it leads to the following open question and the answer involves in particular the geometry of \(M\) and \(N\). 
\begin{open}\label{openQuestionGN}
If the manifolds \(M\) and \(N\), \(k\ge 2\) and \(p\in[1,+\infty)\) do no satisfy the hypotheses of propositions~\ref{sobolevNoGN} and~\ref{sobolevNoGN2}, does exist a constant \(C >0\) such that 
for every \(u\in \dot{W}^{k,p}(M,N) \cap L^\infty(M,N)\), for every \(j\in \{1,\dots,k-1\}\),
\begin{equation*}
\int_M \abs{D^j_K u}_{(\otimes^j g^*_M) \otimes g_N}^\frac{k p}{j} \le C \, \mathrm{osc}(u)^{\frac{k-j}{j}p} \int_M \abs{D_K^k u}_{(\otimes^k g^*_M) \otimes g_N}^p?
\end{equation*}
\end{open}

\subsection{Problem of density of smooth maps}
In this part, we remark that intrinsic definitions~\ref{defW2p} and~\ref{defWkp} and proposition~\ref{iotaCompact} give rise to new open questions whether the manifolds \(M\) and \(N\) are compact.  

In view of proposition~\ref{iotaCompact}, given a map \(u\in \dot{W}^{k,p}(M,N)\), we may ask whether there exists a sequence \((u_\ell)_{\ell \in \N} \subseteq \dot{W}^{k,p}_\iota(M,N)\) that \emph{converges strongly} to \(u\) in \(\dot{W}^{k,p}(M,N)\)~\cite{cvs}*{definition 4.1}, that is, the sequence \((T^k u_\ell)_{\ell \in \N}\) converges to \(T^k u\) locally in measure and the sequence \((\abs{D_K^k u_\ell}_{(\otimes^k g^*_M) \otimes g_N})_{\ell \in \N}\) converges to \(\abs{D_K^k u}_{(\otimes^k g^*_M ) \otimes g_N}\) in \(L^p(M)\).
In view of the strong density in Sobolev spaces between manifolds (see for example~\citelist{\cite{bethuel}\cite{bpvs}\cite{hl}}), we may also ask whether there exists a map \(u \in \dot{W}^{k,p}_\iota(M,N)\) that cannot be approximated by smooth maps in \(\dot{W}^{k,p}_\iota (M,N)\) but that can be in \(\dot{W}^{k,p}(M,N)\). 

For instance, the hedgehog map \(u_h \colon \mathbb{B}^m \to \mathbb{S}^{m-1}\) defined for every \(x\in \mathbb{B}^m \setminus \{0\}\) by \(u_h(x) = \frac{x}{\abs{x}}\) belongs to \(W^{k,p}_\iota (\mathbb{B}^m, \mathbb{S}^{m-1})\) for \(kp < m\) but \(u_h\) cannot be strongly approximated by maps in \(C^\infty(\overline{\mathbb{B}}^m,\mathbb{S}^{m-1})\) for \(kp \ge m-1\) since the identity map on \(\mathbb{S}^{m-1}\) does not have a continuous extension to \(\mathbb{B}^m\) with values into \(\mathbb{S}^{m-1}\)~\citelist{\cite{bz}*{\S II}\cite{su}*{\S 4 example}}. 

For the notion of intrinsic higher-order Sobolev maps, the space is larger than the one by embedding (proposition~\ref{iotaCompact}) but the notion of convergence is weaker. However, even if the question of strong density is still open, the necessary and sufficient condition that appears with definition by embedding~\cite{bpvs}*{theorem 1} is necessary for the intrinsic one. 

We prove the results for the case \(k=2\) but we note that those results extend to higher-order. 

\medskip 

For \(q \in [1,+\infty)\), we denote by \(\lfloor q \rfloor\) the integer part of \(q\) and by \(\pi_{\lfloor q \rfloor}(N)\) the \( \lfloor q \rfloor^\textrm{th}\) homotopy group of \(N\). For instance, if \(\pi_{\lfloor q \rfloor}(N) \simeq \{0\}\), then every continuous map \(f \colon \mathbb{S}^{\lfloor q \rfloor} \to N\) on the \(\lfloor q \rfloor\)-dimensional sphere is homotopic to a constant map. 

\begin{propo}\label{convergenceHomotopy}
Let \(M,N\) be two smooth connected compact Riemannian manifolds and let \(p\in [1,+\infty)\) so that \(1 \le 2p < m\). 
\begin{enumerate}[(i)]
\item\label{simpleCase} If for every \(u \in \dot{W}^{2,p}(M, N)\), there exists a sequence \((u_\ell)_{\ell\in \N}\) in \(C^\infty(M, N)\) such that the sequence \((Tu_\ell)_{\ell \in \N}\) converges to \(Tu\) locally in measure and such that the sequence \((\abs{D^2_Ku_\ell}_{g^*_M\otimes g^*_M\otimes g_N})_{\ell \in \N}\) is bounded in \(L^p(M)\), and if \(2p \notin \N\), then \(\pi_{\lfloor 2p \rfloor}(N) \simeq \{0\}\). 
\item\label{criticalCase} If for every \(u \in \dot{W}^{2,p}(M, N)\), there exists a sequence \((u_\ell)_{\ell\in \N}\) in \(C^\infty(M, N)\) such that the sequence \((T^2u_\ell)_{\ell \in \N}\) converges to \(T^2u\) locally in measure and such that the sequence \((\abs{D^2_Ku_\ell}_{g^*_M\otimes g^*_M\otimes g_N})_{\ell \in \N}\) converges to \(\abs{D^2_K u}_{g^*_M\otimes g^*_M\otimes g_N}\) in \(L^p(M)\), and if \(2p \in \N\), then \(\pi_{2p}(N) \simeq \{0\}\). 
\end{enumerate}
\end{propo}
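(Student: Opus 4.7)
The plan is to argue contrapositively in both cases: assuming the relevant homotopy group of $N$ is nontrivial, we construct a map $u \in \dot{W}^{2,p}(\mathbb{B}^m, N)$ that admits no such approximation. Since density and the notions of convergence are local, it suffices to work on $M = \mathbb{B}^m$. Set $k = \lfloor 2p \rfloor$ in case~\eqref{simpleCase} and $k = 2p$ in case~\eqref{criticalCase}, and fix a smooth $\phi \colon \mathbb{S}^k \to N$ whose class in $\pi_k(N)$ is nontrivial. Define
\[
u(x, y) = \phi\Bigl(\frac{x}{\abs{x}}\Bigr), \quad (x, y) \in (\mathbb{B}^{k+1} \setminus \{0\}) \times \mathbb{B}^{m - k - 1}.
\]
A direct computation in polar coordinates yields $\abs{D^2_K u}(x, y) \le C \abs{x}^{-2}$, so that the radial integral $\int_0^1 r^{k - 2p}\,dr$ is finite in both cases (strictly in case~\eqref{simpleCase}, and with exponent $0$ in case~\eqref{criticalCase}), hence $u \in \dot{W}^{2,p}(\mathbb{B}^m, N)$.

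Suppose by contradiction that a sequence $(u_\ell)$ in $C^\infty(\overline{\mathbb{B}}^m, N)$ satisfies the hypothesis. The heart of the argument is a slicing onto $k$-dimensional spheres $r \mathbb{S}^k \times \{y\}$. In polar coordinates $(r, \theta, y) \in (0, 1) \times \mathbb{S}^k \times \mathbb{B}^{m-k-1}$, Fubini's theorem and Fatou's lemma applied in $(r, y)$, together with a diagonal extraction, produce a subsequence, a radius $r$ and a point $y$ such that the spherical restrictions $w_\ell := u_\ell(r\,\cdot\,, y)$ are smooth maps $\mathbb{S}^k \to N$, converge almost everywhere to a reparametrization of $\phi$, and satisfy a uniform bound on $\int_{\mathbb{S}^k} \abs{D^2_K w_\ell}^p$. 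Combining lemma~\ref{absDku} applied on $\mathbb{S}^k$, the $L^\infty$ bound coming from $w_\ell$ taking values in the compact target $N$, and a classical scalar Gagliardo--Nirenberg argument, one promotes this to a uniform bound of $\iota \circ w_\ell$ in $W^{2,p}(\mathbb{S}^k, \R^\nu)$ for any isometric embedding $\iota$.

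In case~\eqref{simpleCase}, $2p > k$ and the compact Sobolev embedding $W^{2,p}(\mathbb{S}^k) \hookrightarrow C^{0,\alpha}(\mathbb{S}^k)$ yields a further subsequence converging uniformly to $\phi$. In case~\eqref{criticalCase}, $2p = k$ and the strong convergence of $\abs{D^2_K u_\ell}$ in $L^p(\mathbb{B}^m)$ combined with the convergence in measure of $T^2 u_\ell$ translates, on a good slice, into strong $W^{2,p}(\mathbb{S}^k, \R^\nu)$ convergence of $\iota \circ w_\ell$ to $\iota \circ \phi$; the critical embedding $W^{2,p}(\mathbb{S}^k) \hookrightarrow \operatorname{VMO}(\mathbb{S}^k)$ then upgrades this to $\operatorname{VMO}(\mathbb{S}^k, N)$ convergence. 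In either situation, each $w_\ell$ extends smoothly across the ball $\overline{\mathbb{B}^{k+1}_r} \times \{y\}$ via $u_\ell$ and hence represents the trivial class in $\pi_k(N)$. Continuity of the homotopy class on $C^0$ convergence, respectively on $\operatorname{VMO}$ convergence by the Brezis--Nirenberg theory, forces $\phi$ itself to be null-homotopic, contradicting its choice.

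The main obstacle is the passage from the purely intrinsic bound $\int_{\mathbb{B}^m} \abs{D^2_K u_\ell}^p \le C$ to control of $\int_{\mathbb{S}^k} \abs{Dw_\ell}^{2p}$ on a generic slice. The intrinsic norm does not by itself bound first derivatives (as proposition~\ref{sobolevNoGN} illustrates), so one must simultaneously use the almost-everywhere convergence $u_\ell \to u$, the $L^\infty$ bound coming from the compact target, and lemma~\ref{absDku} to bridge the intrinsic and the embedded formulations on the slice, closing the estimate in a way that is compatible with the Sobolev--VMO embedding theory needed for the homotopy argument.
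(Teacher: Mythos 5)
Your overall strategy coincides with the paper's: the same singular map \(u(x',x'') = f(x'/\abs{x'})\), the same Fubini--Fatou slicing onto spheres \(\mathbb{S}^{\lfloor 2p\rfloor}_r\), and the same conclusion via homotopy invariance on good slices (the paper quotes White's theorems on homotopy classes under weak boundedness, resp.\ strong convergence, in \(\dot{W}^{1,2p}(\mathbb{S}^{\lfloor 2p\rfloor}_r,N)\), where you propose \(C^{0,\alpha}\), resp.\ VMO, arguments instead; either endgame is fine once one has first-order control on the slice). But the decisive analytic step is missing. You need to pass from the slicewise bound (or convergence) of \(\abs{D^2_K u_\ell}\) in \(L^p(\mathbb{S}^k)\) to a bound (or convergence) of \(\abs{Tu_\ell}\) in \(L^{2p}(\mathbb{S}^k)\), and you propose to do this by ``a classical scalar Gagliardo--Nirenberg argument'' together with the \(L^\infty\) bound from the compact target. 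This is exactly what cannot work: propositions~\ref{sobolevNoGN} and~\ref{sobolevNoGN2} show that in the intrinsic setting a bound on \(\abs{D^2_K u_\ell}\) plus boundedness of the target gives no control whatsoever on \(\int\abs{Tu_\ell}^{2p}\) (a geodesic wrapped \(\ell\) times has vanishing intrinsic second energy and arbitrarily large first energy). For the same reason your claimed uniform bound of \(\iota\circ w_\ell\) in \(W^{2,p}(\mathbb{S}^k,\R^\nu)\) is circular: estimating \(D^2(\iota\circ w_\ell)\) requires \(\abs{A_\iota(Tw_\ell,Tw_\ell)}\sim\abs{Tw_\ell}^2\) in \(L^p\), i.e.\ precisely the \(L^{2p}\) bound on \(Tw_\ell\) you are trying to produce. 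You acknowledge this as ``the main obstacle'' and assert it can be closed using a.e.\ convergence, the \(L^\infty\) bound and lemma~\ref{absDku}, but no argument is given, and the \(L^\infty\) bound is provably not the right ingredient.

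The paper's resolution (lemma~\ref{lemmeConvergence}) uses the convergence of \((Tu_\ell)_{\ell\in\N}\) \emph{locally in measure} in a quantitative way: it furnishes \(\gamma,\varepsilon>0\) and sets \(A_\ell=\{\abs{Tu_\ell}_{g^*\otimes g_N}\le\gamma\}\) of measure at least \(\varepsilon\) on the slice, and then a Poincar\'e inequality normalized on \(A_\ell\) (lemma~\ref{Poincare}), combined with \(\bigabs{T\abs{Tu_\ell}}\le\abs{D^2_K u_\ell}\) from lemma~\ref{absTu}, yields \(\norm{\abs{Tu_\ell}}_{L^{2p}(\mathbb{S}^k)}\le C(\norm{\abs{D^2_Ku_\ell}}_{L^p}+\gamma)\); the critical case is then upgraded to strong \(W^{1,p}\), hence \(L^{2p}\), convergence by Rellich--Kondrashov and dominated convergence. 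Without this (or an equivalent substitute for the failed Gagliardo--Nirenberg inequality), your proof does not go through. A secondary point: the proposition concerns an arbitrary compact \(M\), so the reduction ``it suffices to work on \(\mathbb{B}^m\)'' needs the transport step the paper carries out (a map \(\Phi\) sending a tubular neighborhood of an \((m-\lfloor 2p\rfloor-1)\)-sphere onto \(\mathbb{B}^{\lfloor 2p\rfloor+1}\times\mathbb{S}^{m-\lfloor 2p\rfloor-1}\), constant near \(\partial\mathbb{B}^m\)); asserting locality is not enough, since the hypothesis quantifies over maps defined on all of \(M\).
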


As a consequence, if \(m-1 < 2p < m\), since \(\pi_{m-1} (\mathbb{S}^{m-1}) \not\simeq \{0\}\), there is no weakly bounded sequence \((u_\ell)_{\ell \in \N}\) in \(\dot{W}^{2,p}(\mathbb{B}^m,\mathbb{S}^{m-1})\) such that \((Tu_\ell)_{\ell \in \N}\) converges to \(Tu_h\) locally in measure and if \(m-1 \le 2p < m\), the map \(u_h\) cannot be strongly approximated by smooth maps in  \(\dot{W}^{2,p}(\mathbb{B}^m,\mathbb{S}^{m-1})\). 

\begin{lemme}\label{lemmeConvergence}
Let \(N\) be a smooth connected compact Riemannian manifold and let \(p\in [1,k)\). 
Let \((u_\ell)_{\ell \in \N}\) be a sequence in \(C^\infty(\mathbb{S}^k,N)\) and let \(u \in \dot{W}^{2,p}(\mathbb{S}^k, N)\). 
\begin{enumerate}[(i)]
\item\label{lemmeSimple} 
If \(2p \ge k\), if the sequence \((Tu_\ell)_{\ell \in \N}\) converges to \(T u\) locally in measure and if the sequence \((\abs{D^2_K u_\ell}_{g^*_{k+1}\otimes g^*_{k+1} \otimes g_N})_{\ell \in \N}\) is bounded in \(L^p(\mathbb{S}^k)\), then the sequence \((\abs{Tu_\ell}_{g^*_{k+1} \otimes g_N})_{\ell \in \N}\) is bounded in \(L^{2p}(\mathbb{S}^k)\). 
\item\label{lemmeCritical} 
If \(2 p \ge k\), if the sequence \((T^2 u_\ell)_{\ell \in \N}\) converges to \(T^2 u\) locally in measure and if the sequence \((\abs{D^2_K u_\ell}_{g^*_{k+1}\otimes g^*_{k+1} \otimes g_N})_{\ell \in \N}\) converges to \(\abs{D^2_K u}_{g^*_{k+1} \otimes g^*_{k+1} \otimes g_N}\) in \(L^p(\mathbb{S}^k)\), 
then the sequence \((\abs{Tu_\ell}_{g^*_{k+1}\otimes g_N})_{\ell \in \N}\) converges to \(\abs{Tu}_{g^*_{k+1}\otimes g_N}\) in \(L^{2p}(\mathbb{S}^k)\). 
\end{enumerate}
\end{lemme}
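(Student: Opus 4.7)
The strategy is to pull the problem back to Euclidean space through an isometric embedding and apply the classical Gagliardo--Nirenberg interpolation. Fix an isometric embedding \(\iota \in C^2(N, \mathbb{R}^\nu)\) (Nash) and set \(v_\ell = \iota \circ u_\ell \in C^\infty(\mathbb{S}^k, \mathbb{R}^\nu)\). Since \(N\) is compact, \(\sup_\ell \|v_\ell\|_{L^\infty} \le \mathrm{diam}(\iota(N)) < \infty\), and the Gauss--Weingarten decomposition gives
\[
 D^2 v_\ell = D\iota(u_\ell) \circ D^2_K u_\ell + A_\iota(Du_\ell, Du_\ell),
\]
so that \(|Dv_\ell| = |Du_\ell|\) and \(|D^2 v_\ell|^p \le 2^{p-1} (\|A_\iota\|_\infty^p |Du_\ell|^{2p} + |D^2_K u_\ell|^p)\). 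The hypothesis \(2p \ge k\) is precisely the (Sobolev-critical or supercritical) range in which the Euclidean Gagliardo--Nirenberg inequality
\[
 \int_{\mathbb{S}^k} |Dw|^{2p} \le C \,\mathrm{osc}(w)^p \int_{\mathbb{S}^k} |D^2 w|^p
\]
holds for smooth bounded \(\mathbb{R}^\nu\)-valued maps; applying it naively to \(v_\ell\) yields the \emph{circular} estimate \(\int |Du_\ell|^{2p} \lesssim \int |Du_\ell|^{2p} + \int |D^2_K u_\ell|^p\), which cannot be absorbed without further input.

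For part~\eqref{lemmeSimple}, I would bypass the circularity by \emph{localizing on the target}. Cover \(N\) by finitely many geodesic balls \((B(p_i, \delta))_{i = 1}^n\) of small radius \(\delta > 0\) to be fixed, take smooth cutoffs \(\zeta_i\) with \(\zeta_i \equiv 1\) on \(B(p_i,\delta)\) and \(\mathrm{supp}(\zeta_i) \subset B(p_i, 2\delta)\), and consider the smooth \(\mathbb{R}^\nu\)-valued maps
\[
 w_\ell^i = (\zeta_i \circ u_\ell)\,(v_\ell - \iota(p_i)),\qquad \|w_\ell^i\|_{L^\infty} \le C\delta.
\]
Applying Gagliardo--Nirenberg to each \(w_\ell^i\) on \(\mathbb{S}^k\), and noting that \(Dw_\ell^i\) and \(D^2 w_\ell^i\) coincide with \(Dv_\ell\) and \(D^2 v_\ell\) on \(\{u_\ell \in B(p_i,\delta)\}\) while the cutoff derivatives \(D(\zeta_i \circ u_\ell)\), \(D^2(\zeta_i \circ u_\ell)\) contribute only controlled multiples of \(|Du_\ell|^2 + |D^2_K u_\ell|\) on the annular region, one obtains
\[
 \int_{\{u_\ell \in B(p_i, \delta)\}} |Du_\ell|^{2p} \le C \delta^p \int_{\{u_\ell \in B(p_i, 2\delta)\}} \bigl(|Du_\ell|^{2p} + |D^2_K u_\ell|^p\bigr).
\]
Summing over \(i\) (with bounded overlap on \(N\)) and choosing \(\delta\) small enough that the prefactor \(C n \delta^p\) is strictly less than \(\tfrac{1}{2}\), the \(|Du_\ell|^{2p}\) term is absorbed, and the uniform bound \(\|Tu_\ell\|_{L^{2p}}^{2p} \le C' \|D^2_K u_\ell\|_{L^p}^p\) follows.

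For part~\eqref{lemmeCritical}, part~\eqref{lemmeSimple} already gives an \(L^{2p}\)-bound on \((|Tu_\ell|)\); combined with the pointwise-almost-everywhere convergence along a subsequence (coming from convergence in measure of \(T^2 u_\ell\) to \(T^2 u\)), Vitali's convergence theorem reduces strong convergence in \(L^{2p}\) to uniform integrability of \(\{|Tu_\ell|^{2p}\}\). The same localization argument, applied with integration restricted to any measurable set \(E \subset \mathbb{S}^k\), yields
\[
 \int_{E} |Du_\ell|^{2p} \le C \int_{E} |D^2_K u_\ell|^p + \text{(absorbable tail)}.
\]
Strong convergence of \(|D^2_K u_\ell|\) in \(L^p\) makes \(\{|D^2_K u_\ell|^p\}\) uniformly integrable in \(L^1\), and the inequality transfers this property to \(\{|Du_\ell|^{2p}\}\); Vitali then gives convergence of \(|Tu_\ell|\) to \(|Tu|\) in \(L^{2p}(\mathbb{S}^k)\).

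The main obstacle is engineering the localization so that the circular \(|Du_\ell|^{2p}\) contribution on the right-hand side is truly absorbable. The delicate part is that the chain rule produces \(|Du_\ell|^2\) factors in \(D(\zeta_i \circ u_\ell)\) and \(D^2(\zeta_i \circ u_\ell)\), which threaten to reintroduce the same energy; what rescues the argument is that on the support of \(w_\ell^i\) the target oscillation is \(O(\delta)\), providing the small prefactor \(\delta^p\) in Gagliardo--Nirenberg that dominates the unabsorbed geometric constants \(\|A_\iota\|_\infty^p\) and \(\|\zeta_i\|_{C^2}^p\). A secondary technical point is verifying that the set-localized estimates in part~\eqref{lemmeCritical} genuinely inherit the uniform integrability, which requires that the thickening of \(E\) used in the argument be controlled independently of \(\ell\).
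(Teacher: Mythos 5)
Your proposal has a genuine gap, and it is not a repairable technicality: the inequality you claim to derive in part~\eqref{lemmeSimple}, namely a uniform bound \(\norm{\,\abs{Tu_\ell}\,}_{L^{2p}}^{2p} \le C' \norm{\,\abs{D^2_K u_\ell}\,}_{L^p}^{p}\) valid for all smooth maps, never uses the hypothesis that \((Tu_\ell)_{\ell\in\N}\) converges locally in measure to the fixed map \(Tu\), and such a hypothesis-free Gagliardo--Nirenberg inequality into a compact target is false. This is exactly the phenomenon behind proposition~\ref{sobolevNoGN}: composing a unit-speed closed geodesic \(\gamma\) of \(N\) with real-valued functions of large oscillation, e.g.\ \(u_\ell = \gamma \circ (\ell x_1)\) on \(\mathbb{S}^k\), gives \(\abs{D^2_K u_\ell} = \abs{\operatorname{Hess}(\ell x_1)} \lesssim \ell\) while \(\abs{Tu_\ell} = \ell\,\abs{D x_1}\), so \(\int \abs{Tu_\ell}^{2p} \sim \ell^{2p}\) but \(\int \abs{D^2_K u_\ell}^p \sim \ell^p\); the oscillation that your localization exploits is that of the target values (bounded because \(N\) is compact), not of the "angular" variable along the geodesic, and this is precisely why left composition destroys second-order control. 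Concretely, the absorption step also fails on its own terms: the cutoffs \(\zeta_i\) with \(\zeta_i = 1\) on \(B(p_i,\delta)\) and support in \(B(p_i,2\delta)\) satisfy \(\norm{D\zeta_i}_\infty \sim \delta^{-1}\), so the term \(D(\zeta_i\circ u_\ell)\otimes Dv_\ell\) in \(D^2 w_\ell^i\) is of size \(\delta^{-1}\abs{Du_\ell}^2\), and after multiplying by \(\operatorname{osc}(w_\ell^i)^p \sim \delta^p\) the coefficient of \(\int \abs{Du_\ell}^{2p}\) on the right-hand side is \(O(1)\), not \(O(\delta^p)\); moreover the number \(n\) of balls grows as \(\delta \to 0\). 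So there is no smallness to absorb, consistent with the fact that the target inequality cannot hold.

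The missing idea is to exploit the convergence in measure of \((Tu_\ell)_{\ell\in\N}\) to the (a.e.\ finite) map \(Tu\): it yields \(\varepsilon, \gamma > 0\) with \(\mu(\{ \abs{Tu_\ell}_{g^*_{k+1}\otimes g_N} \le \gamma\}) > \varepsilon\) for every \(\ell\). Combining this with lemma~\ref{absTu} (\(\abs{Tu_\ell} \in W^{1,p}\) and \(\abs{\,T\abs{Tu_\ell}\,} \le \abs{D^2_K u_\ell}\)) and the Poincar\'e inequality of lemma~\ref{Poincare}, where the average is taken over that set of uniformly positive measure, gives the \(L^{2p}\) bound directly, since \(W^{1,p}(\mathbb{S}^k) \hookrightarrow L^{2p}(\mathbb{S}^k)\) precisely when \(2p \ge k\); no embedding of \(N\) and no Euclidean Gagliardo--Nirenberg inequality is needed. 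Your part~\eqref{lemmeCritical} inherits the gap because it rests on part~\eqref{lemmeSimple}; the Vitali strategy could in principle be completed, but the simpler route is the one above followed by Rellich--Kondrashov for \((\abs{Tu_\ell})_{\ell\in\N}\) bounded in \(W^{1,p}\), dominated convergence for \(T(\abs{Tu_\ell} - \abs{Tu})\) using the \(L^p\) convergence of \(\abs{D^2_K u_\ell}\), and the Sobolev inequality to upgrade \(W^{1,p}\) convergence to \(L^{2p}\) convergence.
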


Since Sobolev spaces between Riemannian manifolds do not form a vector space, it is not surprising that there is no some equivalence between both assertions in the previous lemma.  
\begin{lemme}[Poincar\'e inequalities]\label{Poincare}
Let \(M\) be a smooth connected compact Riemannian manifold and let \(\mu\) be the measure associated to the Riemannian metric on \(M\). 
Let \(p \in [1,m)\) and let \(q \in [1, \frac{mp}{m-p}]\). 
Let \(\varepsilon >0\). 
There exists \(C >0\) such that for every map \(v \in W^{1,p}(M)\) and every measurable subset \(A \subseteq M\) such that \(\mu(A) > \varepsilon\), 
\[
\norm{v}_{L^q(M)} \le C \Bigl(\norm{\nabla v}_{L^p(M)} + \frac{1}{\mu(A)}\int_A \abs{v} \Bigr). 
\]
\end{lemme}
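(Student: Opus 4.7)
The plan is to reduce the claim to the classical Sobolev--Poincaré inequality on the compact Riemannian manifold $M$, by comparing the mean of $v$ on $A$ with its mean on the whole manifold. Throughout, let $\bar v := \mu(M)^{-1}\int_M v\,d\mu$ and $v_A := \mu(A)^{-1}\int_A v\,d\mu$.

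First I would invoke the Sobolev--Poincaré inequality on the compact Riemannian manifold $(M,\mu)$ (see for instance \cite{aubin}): since $1 \le p < m$ and $1 \le q \le mp/(m-p)$, there exists a constant $C_1 = C_1(M,p,q) > 0$ such that for every $v \in W^{1,p}(M)$,
\[
  \norm{v - \bar v}_{L^q(M)} \le C_1 \norm{\nabla v}_{L^p(M)}.
\]
This is obtained from the continuous embedding $W^{1,p}(M) \hookrightarrow L^{mp/(m-p)}(M)$ applied to $v - \bar v$, combined with H\"older's inequality on the finite-measure space $M$ to lower the integrability exponent from $mp/(m-p)$ down to any $q \in [1, mp/(m-p)]$.

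Next I would control the difference between the two averages using H\"older's inequality. Since $\int_A (v-\bar v)\,d\mu = \mu(A)(v_A - \bar v)$,
\[
  \abs{\bar v - v_A}\,\mu(A) \le \mu(A)^{1-1/p} \norm{v - \bar v}_{L^p(M)} \le C_1\, \mu(A)^{1-1/p}\, \norm{\nabla v}_{L^p(M)},
\]
which together with $\mu(A) > \varepsilon$ yields
\[
  \abs{\bar v - v_A} \le C_1\, \varepsilon^{-1/p}\, \norm{\nabla v}_{L^p(M)}.
\]
Using the trivial estimate $\abs{v_A} \le \mu(A)^{-1} \int_A \abs{v}\,d\mu$, I get a bound on $\abs{\bar v}$ by the right-hand side of the desired inequality.

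Finally I would close the argument by the triangle inequality
\[
  \norm{v}_{L^q(M)} \le \norm{v - \bar v}_{L^q(M)} + \abs{\bar v}\, \mu(M)^{1/q},
\]
and plug in the two previous estimates; all factors combine into a single constant $C = C(M,p,q,\varepsilon)$ to give the desired inequality. The whole argument is essentially routine once the Sobolev--Poincaré inequality on $M$ is granted; the hypothesis $\mu(A) > \varepsilon$ is used only to turn the factor $\mu(A)^{-1/p}$ into a constant, so no serious obstacle arises.
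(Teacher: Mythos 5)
Your proposal is correct, but it takes a different route from the paper. The paper starts from the classical Poincar\'e inequality on a compact manifold in the form \(\norm{v}_{L^q(M)} \le C\,\norm{\nabla v}_{L^p(M)} + \mu(M)^{\frac1q-1}\int_M \abs{v}\), then splits \(\int_M\abs{v}\) into the contributions of \(A\) and \(M\setminus A\), estimates the latter by H\"older as \(\mu(M\setminus A)^{1-\frac1q}\norm{v}_{L^q(M)}\), and absorbs this term into the left-hand side, the hypothesis \(\mu(A)>\varepsilon\) keeping the absorption coefficient strictly below \(1\). You instead use the mean-zero Sobolev--Poincar\'e inequality, compare the average over \(M\) with the average over \(A\), and conclude by the triangle inequality. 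Both arguments are standard and short; yours has the mild advantage that it works uniformly down to \(q=1\) and makes the dependence of the constant on \(\varepsilon\) explicit (a factor \(\varepsilon^{-1/p}\)), whereas the paper's absorption step degenerates as \(q\to 1\) (at \(q=1\) the displayed constant \(\mu(M)/(\mu(M)^{1-\frac1q}-(\mu(M)-\varepsilon)^{1-\frac1q})\) is ill-defined, so that case must be recovered separately, e.g.\ by H\"older from a larger exponent). One small point in your write-up: in the estimate \(\norm{v-\bar v}_{L^p(M)}\le C_1\norm{\nabla v}_{L^p(M)}\) you invoke the Poincar\'e inequality at the exponent \(p\), not \(q\) (since \(q<p\) is possible), so strictly speaking this is a second, equally classical, instance of the inequality with its own constant; this is harmless but should be said.
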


\begin{proof}
\resetconstant
By the classical Poincar\'e inequality on compact Riemannian manifolds~\cite{hebey}*{proposition 3.9}, there exists a constant \(\Cl{poin}>0\) such that for every \(v\in W^{1,p}(M)\), 
\[
\norm{v}_{L^q(M)} \le \Cr{poin} \norm{\nabla v}_{L^p(M)} + \mu(M)^{\frac{1}{q}-1} \int_M \abs{v}.
\] 
By H\"older's inequality applied to the second term, for every measurable subset \(A \subseteq M\) such that \(\mu(A) >\varepsilon\), we have
\[
\norm{v}_{L^q(M)} \le \Cr{poin} \norm{\nabla v}_{L^p(M)} + \frac{\mu(M\setminus A)^{1-\frac{1}{q}}}{\mu(M)^{1-\frac{1}{q}}} \norm{v}_{L^q(M)} + \frac{\mu(M)^\frac{1}{q}}{\mu(A)} \int_A \abs{v},
\]
and so 
\begin{align*}
\norm{v}_{L^q(M)} 
& \le \frac{\mu(M)}{\mu(M)^{1-\frac{1}{q}} - \mu(M\setminus A)^{1-\frac{1}{q}}} 
\Bigl( \Cr{poin} \, \mu(M)^{-\frac{1}{q}} \norm{\nabla v}_{L^q(M)} + \frac{1}{\mu(A)} \int_A \abs{v} \Bigr) \\
& \le  \frac{\mu(M)}{\mu(M)^{1-\frac{1}{q}} - (\mu(M)-\varepsilon)^{1-\frac{1}{q}}} 
\Bigl(\Cr{poin} \, \mu(M)^{-\frac{1}{q}} \norm{\nabla v}_{L^q(M)} + \frac{1}{\mu(A)} \int_A \abs{v} \Bigr). \qedhere
\end{align*}
\end{proof}

\begin{proof}[Proof of lemma~\ref{lemmeConvergence}]
\resetconstant
Let \(\mu\) be the measure associated to the Riemannian metric on the sphere \(\mathbb{S}^k \subseteq \R^{k+1}\). 
Since the sequence \((T u_\ell)_{\ell \in \N}\) converges to \(Tu\) locally in measure in assertions~\eqref{lemmeSimple} and~\eqref{lemmeCritical}, there exist \(\varepsilon> 0\) and \(\gamma >0\) such that for every \(\ell \in \N\), 
\[
\mu \bigl(\{ x \in \mathbb{S}^k \colon \abs{Tu_\ell(x)}_{g^*_{k+1} \otimes g_N} \le \gamma\} \bigr) > \varepsilon. 
\]
For every \(\ell \in \N\), by lemma~\ref{absTu}, \(\abs{Tu_\ell}_{g^*_{k+1} \otimes g_N} \in W^{1,p}(\mathbb{S}^k)\) and almost everywhere in \(\mathbb{S}^k\)
\[
\bigabs{T\abs{Tu_\ell}_{g^*_{k+1}\otimes g_N}}_{g^*_{k+1} \otimes g_1} \le \abs{D^2_K u_\ell}_{g^*_{k+1} \otimes g^*_{k+1} \otimes g_N}. 
\]
So by the Poincar\'e inequality (lemma~\ref{Poincare}), there exists \(\Cl{alpha} >0\) such that for every \(\ell \in \N\), 
if \(A_\ell = \{ x \in \mathbb{S}^k \colon \abs{T u_\ell(x)}_{g^*_{k+1} \otimes g_N} \le \gamma\}\),
\begin{multline}\label{TuBounded}
\norm{ \, \abs{T u_\ell}_{g^*_{k+1} \otimes g_N} \,}_{L^{2p}(\mathbb{S}^k)}
\\ \le \Cr{alpha} \Bigl( \bignorm{ \, \abs{T\abs{Tu_\ell}_{g^*_{k+1} \otimes g_N}}_{g^*_{k+1} \otimes g_1} \,}_{L^p(\mathbb{S}^k)} 
+ \frac{1}{\mu(A_\ell)} \int_{A_\ell} \abs{T u_\ell}_{g^*_{k+1}\otimes g_N} \Bigr) \\
\\ \le \Cr{alpha} \Bigl( \bignorm{ \, \abs{D^2_K u_\ell}_{g^*_{k+1} \otimes g^*_{k+1} \otimes g_N} \, }_{L^p(\mathbb{S}^k)} + \gamma \Bigr).
\end{multline}
If assumptions of assertion~\eqref{lemmeSimple} or~\eqref{lemmeCritical} are satisfied, by previous inequality~\eqref{TuBounded}, the sequence \((\abs{Tu_\ell}_{g^*_{k+1}\otimes g_N})_{\ell \in \N}\) is bounded in \(L^{2p}(\mathbb{S}^k)\). 

We assume now that assumptions of assertion~\eqref{lemmeCritical} are satisfied. 
Since the sequence \((u_\ell)_{\ell\in \N}\) converges to \(u\) locally in measure, by weak closure property in Sobolev spaces~\cite{cvs}*{proposition 3.8}, \(u \in \dot{W}^{1,2p}(\mathbb{S}^k, N)\). In particular, \(\abs{Tu}_{g^*_{k+1} \otimes g_N} \in L^{2p}(\mathbb{S}^k)\). 
Since \(u \in \dot{W}^{1,p}(\mathbb{S}^k, N) \cap \dot{W}^{2,p}(\mathbb{S}^k,N)\), by lemma~\ref{absTu}, \(\abs{Tu}_{g^*_{k+1}\otimes g_N} \in W^{1,p}(\mathbb{S}^k)\) and so by Sobolev inequalities on compact Riemannian manifolds~\cite{hebey}*{theorem 3.5}, 
there exists \(\Cl{sob} >0\) such that for every \(\ell \in \N\), 
\[
\norm{\, \abs{T u_\ell}_{g^*_{k+1} \otimes g_N} - \abs{T u}_{g^*_{k+1} \otimes g_N}\,}_{L^{2p}(\mathbb{S}^k)} 
\le \Cr{sob} \norm{\, \abs{T u_\ell}_{g^*_{k+1} \otimes g_N} - \abs{T u}_{g^*_{k+1} \otimes g_N}\,}_{W^{1,p}(\mathbb{S}^k)}. 
\]
By inequality~\eqref{TuBounded}, the sequence \((\abs{Tu_\ell}_{g^*_{k+1}\otimes g_N})_{\ell \in \N}\) is bounded in \(L^{2p}(\mathbb{S}^k)\), and so by H\"older's inequality, bounded in \(L^p(\mathbb{S}^k)\). 
Since the sequence \((\abs{D^2_K u_\ell}_{g^*_{k+1}\otimes g^*_{k+1} \otimes g_N})_{\ell \in \N}\) is bounded in \(L^p(\mathbb{S}^k)\), and by lemma~\ref{absTu}, the sequence \((\abs{Tu_\ell}_{g^*_{k+1}\otimes g_N})_{\ell \in \N}\) is bounded in \(W^{1,p}(\mathbb{S}^k)\). 
By Rellich--Kondrashov embedding theorem for Sobolev maps on compact Riemannian manifolds~\cite{aubin}*{theorem 2.34 (a)}, and since the sequence \((\abs{Tu_\ell}_{g^*_{k+1}\otimes g_N})_{\ell \in \N}\) converges to \(\abs{Tu}_{g^*_{k+1}\otimes g_N}\) in measure,  the sequence \((\abs{Tu_\ell}_{g^*_{k+1}\otimes g_N})_{\ell \in \N}\) converges to the map \(\abs{Tu}_{g^*_{k+1}\otimes g_N}\) in \(L^p(\mathbb{S}^k)\). 
Since \((T^2 u_\ell)_{\ell \in \N}\) converges to \(T^2u\) locally in measure, the sequence \((T\abs{T u_\ell}_{g^*_{k+1} \otimes g_N})_{\ell\in \N}\) converges to \(T\abs{T u}_{g^*_{k+1} \otimes g_N}\) locally in measure. Furthermore, for every \(\ell \in \N\), 
\[
\bigabs{T(\abs{T u_\ell}_{g^*_{k+1} \otimes g_N} - \abs{T u}_{g^*_{k+1} \otimes g_N})}_{g^*_{k+1}\otimes g_1} 
\le \abs{D^2_K u_\ell}_{g^*_{k+1}\otimes g^*_{k+1}\otimes g_N} + \abs{D^2_K u}_{g^*_{k+1}\otimes g^*_{k+1}\otimes g_N}.
\] 
By Lebesgue's dominated convergence theorem~\cite{bogachev}*{theorem 2.8.5}, the sequence \\ 
\((T(\abs{T u_\ell}_{g^*_{k+1} \otimes g_N} - \abs{T u}_{g^*_{k+1} \otimes g_N}))_{\ell\in \N}\) converges to \(0\) in \(L^p(\mathbb{S}^k)\). 
Hence, the sequence \((\abs{Tu_\ell}_{g^*_{k+1}\otimes g_N})_{\ell \in \N}\) converges to \(\abs{Tu}_{g^*_{k+1}\otimes g_N}\) in \(W^{1,p}(\mathbb{S}^k)\), and so in \(L^{2p}(\mathbb{S}^{k})\) by the Sobolev inequality. 
\end{proof}

\begin{proof}[Proof of proposition~\ref{convergenceHomotopy}]
We first give the proof when \(M = \overline{\Bset}^{\floor{2 p} + 1} \times L\). 
If \(\pi_{\lfloor 2p \rfloor}(N) \not\simeq \{0\}\), then there exists a smooth map \(f \colon \mathbb{S}^{\lfloor 2p \rfloor} \to N\) which is not homotopic to a constant map in \(C^0(\mathbb{S}^{\lfloor 2p \rfloor}, N)\). 
We define the map \(u \colon M \to N\) for every \(x = (x',x'') \in \overline{\mathbb{B}}^{\lfloor 2p \rfloor +1} \times L\) by \(u(x) = f(\frac{x'}{\abs{x'}})\). 
We observe that there exists a constant \(C >0\) such that for every \(x \in (\overline{\Bset}^{\floor{2p} +1} \setminus \{0\}) \times L\), \(\abs{D^2_K u (x)}_{g^*_M\otimes g^*_M \otimes g_N} \le C/\abs{x'}^{2}\), and thus \(u\in \dot{W}^{2,p}(M,N)\). 

We first show that the assumptions of assertion~\eqref{simpleCase} cannot be satisfied. Otherwise, since for every \(\ell \in \N\), 
\begin{align*}
\int_{M} \abs{D^2_K u_\ell}_ {g^*_M \otimes g^*_M \otimes g_N}^p 
& = \int_{L} \int_{\mathbb{B}^{\lfloor 2p \rfloor +1}} \abs{D^2_K u_\ell}_ {g^*_M \otimes g^*_M \otimes g_N}^p \\
& = \int_{L} \Bigl( \int_0^1 \Bigl( \int_{\mathbb{S}^{\lfloor 2p \rfloor}_r} \abs{D^2_K u_\ell}_ {g^*_M \otimes g^*_M \otimes g_N}^p \Bigr) \, d r \Bigr), 
\end{align*}
by Fatou's lemma, for almost every \(x'' \in L\) and almost every \(r\in (0,1)\), up to a subsequence, the sequence \(((\abs{D^2_K u_\ell}_ {g^*_M \otimes g^*_M \otimes g_N})_{\arrowvert{\mathbb{S}^{\lfloor 2p \rfloor }_r \times \{x''\}}})_{\ell \in \N}\) is bounded in \(L^p(\mathbb{S}^{\lfloor 2p \rfloor}_r)\). 
By lemma~\ref{lemmeConvergence}~\eqref{lemmeSimple}, for almost every \(x'' \in L\) and almost every \(r\in (0,1)\), the sequence \(((\abs{Tu_\ell}_ {g^*_M \otimes g_N})_{\arrowvert{\mathbb{S}^{\lfloor 2p \rfloor }_r \times \{x''\}}})_{\ell \in \N}\) is bounded in \(L^{2p}(\mathbb{S}^{\lfloor 2p \rfloor}_r)\). 
Hence, for almost every \(x'' \in L\) and almost every \(r\in (0,1)\), the sequence \(((u_\ell)_{\arrowvert{\mathbb{S}^{\lfloor 2p \rfloor}_r \times \{x''\}}})_{\ell\in \N}\) is bounded in \(\dot{W}^{1,2p}(\mathbb{S}^{\lfloor 2p \rfloor}_r,N)\) and since the homotopy classes are preserved by weakly bounded sequence in \(\dot{W}^{1,2p}(\mathbb{S}_r^{\lfloor 2p \rfloor},\) \(N)\)~\cite{whi}*{theorem 2.1}, 
the map \(u_{\arrowvert{\mathbb{S}^{\lfloor 2p \rfloor}_r \times \{x''\}}}\) is  homotopic to a constant map, and so the map \(f\) also. 

We now show that the assumptions of assertion~\eqref{criticalCase} cannot be satisfied. Otherwise, since for every \(\ell \in \N\), 
\begin{multline*}
\int_{M} \abs{ \, \abs{D^2_K u_\ell}_ {g^*_M \otimes g^*_M \otimes g_N} - \abs{D^2_K u}_ {g^*_M \otimes g^*_M \otimes g_N} }^p  
\\ 
= \int_L \Bigl( \int_0^1 \Bigl( 
\int_{\mathbb{S}^{2p}_r} \abs{ \, \abs{D^2_K u_\ell}_ {g^*_M \otimes g^*_M \otimes g_N} - \abs{D^2_K u}_ {g^*_M \otimes g^*_M \otimes g_N}}^p \Bigr) \, dr \Bigr), 
\end{multline*}
up to a subsequence, for almost every \(x'' \in L\) and almost every \(r\in (0,1)\), the sequence 
\(((\abs{D^2_K u_\ell}_{g^*_M \otimes g^*_M \otimes g_N})_{\arrowvert{\mathbb{S}^{2p}_r \times \{x''\}}})_{\ell \in \N}\) converges to \((\abs{D^2_K u}_{g^*_M \otimes g^*_M \otimes g_N})_{\arrowvert{\mathbb{S}^{2p}_r \times \{x''\}}}\) in \(L^p(\mathbb{S}^{2p}_r)\). 
By lemma~\ref{lemmeConvergence}~\eqref{lemmeCritical}, for almost every \(x'' \in L\) and almost every \(r\in (0,1)\), the sequence \(((\abs{Tu_\ell}_{g^*_M \otimes g_N})_{\arrowvert{\mathbb{S}^{2p}_r \times \{x''\}}} )_{\ell \in \N}\) converges to \((\abs{Tu}_{g^*_M \otimes g_N})_{\arrowvert{\mathbb{S}^{2p}_r \times \{x''\}}}\) in \(L^{2p}(\mathbb{S}^{2p}_r)\). 
Hence, for almost every \(x'' \in L\) and almost every \(r\in (0,1)\), the sequence \(((u_\ell)_{\arrowvert{\mathbb{S}^{2p}_r \times \{x''\}}})_{\ell\in \N}\) converges to \(u_{\arrowvert{\mathbb{S}^{2p}_r \times \{x''\}}}\) in \(\dot{W}^{1,2p}(\mathbb{S}^{2p}_r, N)\)~\cite{cvs}*{proposition 4.4} and since the homotopy classes are preserved by strong convergence in \(\dot{W}^{1,2p}(\mathbb{S}_r^{2p},N)\)~\cite{white}, the map \(u_{\arrowvert{\mathbb{S}^{2p}_r \times \{x''\}}}\) is homotopic to a constant map, and so the map \(f\) also. 

For a general manifold \(M\), we start from a counterexample \(u : \overline{\Bset}^{\floor{2 p} + 1} \times \mathbb{S}^{m - \floor{2 p} - 1} \to N\) to a Sobolev map on the whole of \(M\). 
Indeed, there is a map \(\Phi : \Bset^m \to \mathbb{R}^m\) which maps diffeomorphically some subset \(A \subset \mathbb{B}^m\) to \(\mathbb{B}^{\floor{2p} +1} \times \mathbb{S}^{m - \floor{2p} - 1}\) (\(A\) is a tubular neighborhood of an \((m - \floor{2p} - 1)\)--dimensional sphere, with \(\mathbb{S}^{0} = \{-1, 1\}\)), 
\(\Phi (\mathbb{B}^m \setminus A) \subset (\R^{\floor{2p} + 1} \setminus \{0\}) \times \R^{m - \floor{2p} - 1}\) 
and \(\Phi\) is constant near \(\partial \Bset^m\). The map \(u \circ  \Phi\) is constant near the boundary and can thus be transported and extended to any \(m\)--dimensional manifold, giving the required counterexample.
\end{proof}

For the notion of intrinsic higher-order Sobolev spaces, if the target manifold is compact, the space is also larger than the one by embedding (proposition~\ref{iotaCompact}). In this case, the condition \(\pi_{\lfloor k p \rfloor}(N) \simeq \{0\}\) which is necessary and sufficient in the definition by embedding~\cite{bpvs}*{theorem 1} is also necessary. 

\begin{propo}\label{convergenceHomotopyHigher}
Let \(M,N\) be two smooth connected compact Riemannian manifolds and let \(p\in [1,+\infty)\) so that \(1 \le kp < m\). 
\begin{enumerate}[(i)]
\item 
If for every \(u \in \dot{W}^{k,p}(M, N)\), there exists a sequence \((u_\ell)_{\ell\in \N}\) in \(C^\infty(M, N)\) such that the sequence \((T^{k-1} u_\ell)_{\ell \in \N}\) converges to \(T^{k-1}u\) locally in measure and such that the sequence \((\abs{D^k_K u_\ell}_{(\otimes^k g^*_M) \otimes g_N})_{\ell \in \N}\) is bounded in \(L^p(M)\), and if \(k p \notin \N\), then \(\pi_{\lfloor kp \rfloor}(N) \simeq \{0\}\). 
\item
If for every \(u \in \dot{W}^{k,p}(M, N)\), there exists a sequence \((u_\ell)_{\ell\in \N}\) in \(C^\infty(M, N)\) such that the sequence \((T^k u_\ell)_{\ell \in \N}\) converges to \(T^k u\) locally in measure and such that the sequence \((\abs{D^k_K u_\ell}_{(\otimes^k g^*_M) \otimes g_N})_{\ell \in \N}\) converges to \(\abs{D^k_K u}_{(\otimes^k g^*_M) \otimes g_N}\) in \(L^p(M)\), and if \(kp \in \N\), then \(\pi_{k p}(N) \simeq \{0\}\). 
\end{enumerate}
\end{propo}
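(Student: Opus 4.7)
The argument follows the strategy of the proof of Proposition~\ref{convergenceHomotopy}, the only genuinely new ingredient being a higher-order analogue of Lemma~\ref{lemmeConvergence} obtained by cascading through all intermediate covariant derivatives. Concretely, the plan is first to prove that, on a smooth compact Riemannian manifold \(\Sigma\) of dimension \(d = \floor{k p}\), if \((u_\ell)_{\ell \in \N}\) is a sequence in \(C^\infty(\Sigma, N)\) such that \((T u_\ell)_{\ell \in \N}\) converges to \(T u\) locally in measure and such that \((\abs{D^k_K u_\ell}_{(\otimes^k g^*_\Sigma) \otimes g_N})_{\ell \in \N}\) is bounded in \(L^p(\Sigma)\), then \((\abs{T u_\ell}_{g^*_\Sigma \otimes g_N})_{\ell \in \N}\) is bounded in \(L^{k p}(\Sigma)\); and similarly with \emph{bounded} replaced by \emph{convergent} throughout when \(k p = d\).

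This higher-order lemma will be obtained by a finite downward induction on \(j \in \{k, k-1, \dotsc, 2\}\). At each step, Lemma~\ref{absDku} supplies the pointwise inequality \(\abs{T \abs{D^{j-1}_K u_\ell}_{(\otimes^{j-1}g^*_\Sigma)\otimes g_N}}_{g^*_\Sigma\otimes g_1} \le \abs{D^j_K u_\ell}_{(\otimes^j g^*_\Sigma)\otimes g_N}\) almost everywhere, so a uniform \(L^{k p / j}\)-bound on \(\abs{D^j_K u_\ell}\) together with Poincar\'e's inequality (Lemma~\ref{Poincare}) anchored on a positive-measure sublevel set of \(\abs{T u_\ell}\) extracted from the convergence in measure of \(T u_\ell\) yields a uniform \(W^{1, k p/j}\)-bound on \(\abs{D^{j-1}_K u_\ell}\). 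The Sobolev embedding on \(\Sigma\) then propagates this bound one exponent down to \(L^{k p/(j-1)}\); a direct computation shows that the condition \(d \le k p\) is precisely what makes the critical exponents close up along the chain, and that \(j = 2\) delivers the desired \(L^{k p}\)-bound on \(\abs{T u_\ell}\). For the convergence statement in the critical case \(k p = d\), the same cascade combined with Rellich--Kondrashov compactness and Lebesgue's dominated convergence theorem, exactly as in the proof of Lemma~\ref{lemmeConvergence}~\eqref{lemmeCritical}, upgrades each \(L^{kp/j}\)-bound to \(L^{kp/(j-1)}\)-convergence.

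With this lemma at hand, the remainder is a verbatim higher-order transcription of the proof of Proposition~\ref{convergenceHomotopy}. Assuming \(\pi_{\floor{k p}}(N) \not \simeq \{0\}\), choose a smooth map \(f \colon \mathbb{S}^{\floor{k p}} \to N\) which is not homotopic to a constant and, after reducing to the model case \(M = \overline{\Bset}^{\floor{k p}+1} \times L\) by the same diffeomorphism trick used at the end of the proof of Proposition~\ref{convergenceHomotopy}, set \(u(x', x'') = f(x'/\abs{x'})\). One checks directly that \(\abs{D^j_K u(x)}_{(\otimes^j g^*_M)\otimes g_N} \le C \abs{x'}^{-j}\) for \(j = 1, \dotsc, k\), so that \(u \in \dot{W}^{k, p}(M, N)\) since \(k p < m\). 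Applying Fubini and Fatou to the hypothesised approximating sequence \((u_\ell)_{\ell \in \N}\) yields, for almost every \(r \in (0,1)\) and almost every \(x'' \in L\), restrictions \((u_\ell)_{\arrowvert \mathbb{S}^{\floor{k p}}_r \times \{x''\}}\) satisfying, up to a subsequence, the hypothesis of the higher-order lemma; the latter then guarantees that this sequence is bounded (respectively converges) in \(\dot{W}^{1, k p}(\mathbb{S}^{\floor{k p}}_r, N)\). Homotopy-class preservation under weak boundedness~\cite{whi} in the first assertion and under strong convergence~\cite{white} in the second forces \(u_{\arrowvert \mathbb{S}^{\floor{k p}}_r \times \{x''\}}\), hence \(f\) itself, to be null-homotopic, contradicting the choice of \(f\).

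The main obstacle is the bookkeeping in the iterative bootstrap: one must verify that the Sobolev exponents \(k p/j\) really do close up exactly across the whole chain \(j = k, \dotsc, 2\), and that the strictly subcritical regime \(kp > \floor{k p}\) on the sliced sphere \(\mathbb{S}^{\floor{k p}}\) is what allows one to conclude under mere weak-type boundedness, while the critical endpoint \(kp = \floor{k p}\) genuinely requires strong convergence. This dichotomy, inherited from the \(k=2\) case, is precisely what splits the proposition into its two assertions.
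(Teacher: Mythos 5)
Your overall route is the same as the paper's: state a higher-order analogue of lemma~\ref{lemmeConvergence}, prove it by combining lemma~\ref{absDku} with the Poincar\'e inequality of lemma~\ref{Poincare} applied recursively through the exponents \(kp/j\) (which indeed close up exactly when the sphere dimension is at most \(kp\)), and then repeat the slicing and homotopy-preservation argument of proposition~\ref{convergenceHomotopy}. However, there is a genuine gap in the way you run the cascade. At the step producing a \(W^{1,kp/j}\)-bound on \(\abs{D^{j-1}_K u_\ell}\) with \(j-1\ge 2\), lemma~\ref{Poincare} applied to \(v=\abs{D^{j-1}_K u_\ell}_{(\otimes^{j-1}g^*_\Sigma)\otimes g_N}\) requires a control of \(\frac{1}{\mu(A)}\int_A \abs{D^{j-1}_K u_\ell}\) on some set \(A\) of uniformly positive measure; a sublevel set of \(\abs{Tu_\ell}\), which is all your hypothesis ``\((Tu_\ell)\) converges locally in measure'' provides, gives no information whatsoever about the size of the intermediate covariant derivative \(\abs{D^{j-1}_K u_\ell}\) on that set. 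With only the chain of pointwise inequalities \(\bigabs{T\abs{D^{j-1}_K u_\ell}}\le\abs{D^j_K u_\ell}\), an \(L^p\)-bound on the top derivative, and smallness of \(\abs{Tu_\ell}\) on a fixed set, nothing prevents an intermediate quantity such as \(\abs{D^{k-1}_K u_\ell}\) from being a huge constant, so the cascade as you describe it does not start at the intermediate levels.

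The fix is to strengthen the hypothesis of your lemma to what the proposition actually provides: in assertion (i) the sequence \((T^{k-1}u_\ell)_{\ell\in\N}\) converges locally in measure, hence each \((D^{j}_K u_\ell)_{\ell\in\N}\), \(j\le k-1\), converges in measure to \(D^j_K u\), which is finite almost everywhere; this yields, for every intermediate level, constants \(\gamma_j,\varepsilon_j>0\) and sets \(\{\abs{D^{j}_K u_\ell}\le\gamma_j\}\) of measure at least \(\varepsilon_j\) on which to anchor lemma~\ref{Poincare}. This is precisely why the higher-order analogue of lemma~\ref{lemmeConvergence} must be stated with convergence in measure of \(T^{k-1}u_\ell\) (respectively \(T^k u_\ell\) in the critical case, where the top-order \(L^p\)-convergence also supplies the dominating sequence for the Lebesgue argument), and not merely of \(Tu_\ell\); once the lemma is formulated and anchored this way, your slicing argument, the estimate \(\abs{D^j_K u}\le C\abs{x'}^{-j}\) for the map \(u(x',x'')=f(x'/\abs{x'})\) (integrable because \(kp<\floor{kp}+1\), not because \(kp<m\)), and the appeal to the homotopy results of White go through as in the second-order case.
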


To prove this proposition, we can also state a similar lemma to lemma~\ref{lemmeConvergence}. 
\begin{lemme}
Let \(N\) be a smooth connected compact Riemannian manifold and let \(p\in [1,\infty)\) so that \((k-1)p < q\). 
Let \((u_\ell)_{\ell \in \N}\) be a sequence in \(C^\infty(\mathbb{S}^q,N)\) and let \(u \in \dot{W}^{k,p}(\mathbb{S}^q, N)\). 
\begin{enumerate}[(i)]
\item If \(kp \ge q\), if the sequence \((T^{k-1} u_\ell)_{\ell \in \N}\) converges to \(T^{k-1} u\) locally in measure and if the sequence \((\abs{D^k_K u_\ell}_{(\otimes^k g^*_{q+1}) \otimes g_N})_{\ell \in \N}\) is bounded in \(L^p(\mathbb{S}^q)\), then the sequence \((\abs{Tu_\ell}_{g^*_{q+1}\otimes g_N})_{\ell \in \N}\) is bounded in \(L^{kp}(\mathbb{S}^q)\). 
\item If \(kp \ge q\), if the sequence \((T^k u_\ell)_{\ell \in \N}\) converges to \(T^k u\) locally in measure and if the sequence \((\abs{D^k_K u_\ell}_{(\otimes^k g^*_{q+1}) \otimes g_N})_{\ell \in \N}\) converges to \(\abs{D^k_K u}_{(\otimes^k g^*_{q+1}) \otimes g_N}\) in \(L^p(\mathbb{S}^q)\), then the sequence \((\abs{Tu_\ell}_{g^*_{q+1}\otimes g_N})_{\ell \in \N}\) converges to \(\abs{Tu}_{g^*_{q+1}\otimes g_N}\) in \(L^{kp}(\mathbb{S}^q)\). 
\end{enumerate}
\end{lemme}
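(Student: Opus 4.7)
The plan is to prove both assertions by downward induction on the differentiation order. Setting $p_j := kp/(k-j)$, I will establish that for each $j \in \{0, 1, \dots, k-1\}$, the sequence $(\abs{D^{k-j}_K u_\ell}_{(\otimes^{k-j} g^*_{q+1}) \otimes g_N})_{\ell \in \N}$ is bounded (for (i)) respectively converges to $\abs{D^{k-j}_K u}_{(\otimes^{k-j} g^*_{q+1}) \otimes g_N}$ (for (ii)) in $L^{p_j}(\mathbb{S}^q)$. The base case $j=0$ is exactly the hypothesis, and the terminal case $j=k-1$ yields the conclusion since $D^1_K u_\ell = Tu_\ell$ and $p_{k-1}=kp$.

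For the inductive step, I would transcribe the three-ingredient argument of lemma~\ref{lemmeConvergence} one level higher. The Kato-type inequality of lemma~\ref{absDku} applied to the smooth map $u_\ell$ gives $\abs{D^{k-j-1}_K u_\ell}_{(\otimes^{k-j-1} g^*_{q+1}) \otimes g_N} \in W^{1,p_j}(\mathbb{S}^q)$ with
\[
\bigabs{T \abs{D^{k-j-1}_K u_\ell}_{(\otimes^{k-j-1} g^*_{q+1}) \otimes g_N}}_{g^*_{q+1} \otimes g_1} \le \abs{D^{k-j}_K u_\ell}_{(\otimes^{k-j} g^*_{q+1}) \otimes g_N}
\]
almost everywhere on $\mathbb{S}^q$. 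The convergence in measure of $T^{k-1} u_\ell$ transfers to convergence in measure of $\abs{D^{k-j-1}_K u_\ell}$, producing $\varepsilon, \gamma > 0$ and measurable sets $A_\ell \subseteq \mathbb{S}^q$ with $\mu(A_\ell) \ge \varepsilon$ on which $\abs{D^{k-j-1}_K u_\ell} \le \gamma$. Applying the Poincar\'e inequality (lemma~\ref{Poincare}) with source exponent $p_j$ and exit exponent $p_{j+1}$, together with the inductive hypothesis, then yields the $L^{p_{j+1}}$ bound; the needed Sobolev relation $p_{j+1} \le q p_j/(q - p_j)$ reduces by direct computation exactly to the standing assumption $kp \ge q$. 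For assertion (ii) the same inductive skeleton is used while additionally invoking Rellich--Kondrashov compactness to upgrade convergence in measure of $\abs{D^{k-j-1}_K u_\ell}$ to strong $L^{p_j}$ convergence, and Lebesgue's dominated convergence, with pointwise convergence guaranteed by the stronger assumption $T^k u_\ell \to T^k u$ in measure and domination $\abs{D^{k-j}_K u_\ell} + \abs{D^{k-j}_K u}$ supplied by the inductive $L^{p_j}$ convergence, to promote the bound on $T \abs{D^{k-j-1}_K u_\ell}$ to strong $L^{p_j}$ convergence; this gives convergence in $W^{1,p_j}(\mathbb{S}^q)$ and hence in $L^{p_{j+1}}(\mathbb{S}^q)$ by the Sobolev embedding, mirroring the second half of the proof of lemma~\ref{lemmeConvergence}.

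The main obstacle I anticipate is managing the varying Sobolev regimes along the iteration: the exponent $p_j = kp/(k-j)$ may lie below, at, or above the critical dimension $q$ as $j$ varies, while lemma~\ref{Poincare} is stated only in the subcritical regime. The assumption $(k-1)p < q$ ensures that the induction starts strictly subcritical so that the first few Poincar\'e applications are legitimate, and the assumption $kp \ge q$ ensures that the final target exponent $p_{k-1}=kp$ is reachable by Sobolev, but at the borderline $p_j = q$ and in the supercritical range $p_j > q$ one must replace lemma~\ref{Poincare} by the corresponding Sobolev--Poincar\'e estimates on compact Riemannian manifolds (with $L^{p_{j+1}}$ replaced by $L^r$ for arbitrary $r < \infty$ at the critical exponent, and by $L^\infty$ above). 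Verifying that the transfers of convergence in measure between consecutive differentiation orders remain valid throughout the iteration, and that the constants stay uniform in $\ell$ across the regime changes, constitutes the delicate bookkeeping of the proof.
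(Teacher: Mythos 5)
Your proposal is correct and follows essentially the route the paper itself indicates for this lemma: a recursive application of the Poincar\'e inequality of lemma~\ref{Poincare} combined with the Kato-type inequality of lemma~\ref{absDku}, mirroring step by step the proof of lemma~\ref{lemmeConvergence} (transfer of convergence in measure, Rellich--Kondrashov, dominated convergence and the Sobolev embedding), with the exponent computation $p_{j+1}\le q\,p_j/(q-p_j)\Leftrightarrow kp\ge q$ carried out correctly. The only inaccuracy is the anticipated ``regime change'' obstacle: since $p_j = kp/(k-j)\le kp/2\le (k-1)p < q$ for every source exponent $j\le k-2$, the iteration never leaves the subcritical range, so lemma~\ref{Poincare} as stated suffices at every step and no critical or supercritical Sobolev--Poincar\'e variant is needed.
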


To prove such a lemma, we can use Poincar\'e inequalities (lemma~\ref{Poincare}) recursively and lemma~\ref{absDku}. The structure of the proof of proposition~\ref{convergenceHomotopyHigher} is then the same as the one of proposition~\ref{convergenceHomotopy}. 

\begin{bibdiv}
\begin{biblist}
\bib{adams}{book}{
   author={Adams, David R.},
   author={Hedberg, Lars Inge},
   title={Function spaces and potential theory},
   series={Grundlehren der Mathematischen Wissenschaften},
   volume={314},
   publisher={Springer, Berlin},
   date={1996},
   pages={xii+366},
}
\bib{angels}{article}{
   author={Angelsberg, Gilles},
   author={Pumberger, David},
   title={A regularity result for polyharmonic maps with higher integrability},
   journal={Ann. Global Anal. Geom.},
   volume={35},
   date={2009},
   number={1},
   pages={63--81},
}
\bib{aubin}{book}{
   author={Aubin, Thierry},
   title={Nonlinear analysis on manifolds},
   subtitle={Monge--Amp\`ere equations},
   series={Grundlehren der Mathematischen Wissenschaften},
   volume={252},
   publisher={Springer}, 
   address={New York},
   date={1982},
   pages={xii+204},
}
\bib{bertram}{article}{
   author={Bertram, Wolfgang},
   title={Differential geometry over general base fields and rings},
   conference={
      title={Modern trends in geometry and topology},
   },
   book={
      publisher={Cluj Univ. Press}, 
      address={Cluj-Napoca},
   },
   date={2006},
   pages={95--101},
}
\bib{bethuel}{article}{
   author={Bethuel, Fabrice},
   title={The approximation problem for Sobolev maps between two manifolds},
   journal={Acta Math.},
   volume={167},
   date={1991},
   number={3-4},
   pages={153--206},
}
\bib{bz}{article}{
   author={Bethuel, Fabrice},
   author={Zheng, Xiao Min},
   title={Density of smooth functions between two manifolds in Sobolev spaces},
   journal={J. Funct. Anal.},
   volume={80},
   date={1988},
   number={1},
   pages={60--75},
}
\bib{bogachev}{book}{
   author={Bogachev, V. I.},
   title={Measure theory},
   publisher={Springer},
   place={Berlin},
   date={2007},
   pages={Vol. I: xviii+500 pp., Vol. II: xiv+575},
}
\bib{bourdaud}{article}{
  author={Bourdaud, G.},
  title={Le calcul fonctionnel dans les espaces de Sobolev},
  conference={
    title={S\'eminaire sur les \'Equations aux D\'eriv\'ees Partielles},
    date={1990--1991},
  },
  book={
    publisher={\'Ecole Polytech.}, 
    address={Palaiseau, France},
  },
  date={1991},
}
\bib{bpvs}{article}{
   author={Bousquet, Pierre},
   author={Ponce, Augusto C.},
   author={Van Schaftingen, Jean},
   title={Strong density for higher order Sobolev spaces into compact manifolds},
   journal={J. Eur. Math. Soc. (JEMS)},
   volume={17},
   date={2015},
   number={4},
   pages={763--817},
}
\bib{brezis}{book}{
   author={Brezis, Haim},
   title={Functional analysis, Sobolev spaces and partial differential
   equations},
   series={Universitext},
   publisher={Springer},
   place={New York},
   date={2011},
   pages={xiv+599},
}
\bib{chang}{article}{
   author={Chang, Sun-Yung A.},
   author={Wang, Lihe},
   author={Yang, Paul C.},
   title={A regularity theory of biharmonic maps},
   journal={Comm. Pure Appl. Math.},
   volume={52},
   date={1999},
   number={9},
   pages={1113--1137},
}
\bib{cvs}{article}{
  author={Convent, Alexandra},
  author={Van Schaftingen, Jean},
  title={Intrinsic colocal weak derivatives and Sobolev spaces between manifolds},
  journal={Ann. Sc. Norm. Super. Pisa Cl. Sci. (5).},
  volume={16},
  date={2016},
  number={1},
  pages={97--128},
}
\bib{cvs2}{article}{
   author={Convent, Alexandra},
   author={Van Schaftingen, Jean},
   title={Geometric partial differentiability on manifolds: the tangential derivative and the chain rule},
   journal={J. Math. Anal. Appl.},
   volume={435},
   date={2016},
   number={2},
   pages={1672--1681},
}
\bib{dahlberg}{article}{
   author={Dahlberg, Bj{\"o}rn E. J.},
   title={A note on Sobolev spaces},
   conference={
      title={Harmonic analysis in Euclidean spaces},
      address={Williamstown, Mass.},
      date={1978},
   },
   book={
      series={Proc. Sympos. Pure Math., XXXV, Part},
      publisher={Amer. Math. Soc., Providence, R.I.},
   },
   date={1979},
   pages={183--185},
}
\bib{derham}{book}{
   author={de Rham, Georges},
   title={Differentiable manifolds},
   series={Grundlehren der Mathematischen Wissenschaften },
   volume={266},
   subtitle={Forms, currents, harmonic forms},
   publisher={Springer},
   place={Berlin},
   date={1984},
   pages={x+167},
   isbn={3-540-13463-8},
}
\bib{dieudonne}{book}{
   author={Dieudonn{\'e}, J.},
   title={\'El\'ements d'analyse},
   volume={III},
   series={Cahiers Scientifiques, Fasc. XXXIII},
   publisher={Gauthier-Villars}, 
   address={Paris},
   date={1970},
   pages={xix+367},
}
\bib{docarmo}{book}{
   author = {do Carmo, Manfredo},
   title = {Riemannian geometry},
   translator = {Flaherty, Francis},
   publisher = {Birkha\"user},
   address = {Boston, Mass.},
   date = {1992},
   series = {Mathematics: Theory and Applications},
}
\bib{eg}{book}{
   author = {Evans, Lawrence C.},
   author = {Gariepy, Ronald F.},
   title = {Measure theory and fine properties of functions},
   date = {1992},
   publisher = {CRC Press},
   address = {Boca Raton, Fla.},
}
\bib{gagliardo}{article}{
   author={Gagliardo, Emilio},
   title={Ulteriori propriet\`a di alcune classi di funzioni in pi\`u variabili},
   journal={Ricerche Mat.},
   volume={8},
   date={1959},
   pages={24--51},
}
\bib{gastel}{article}{
   author={Gastel, Andreas},
   author={Scheven, Christoph},
   title={Regularity of polyharmonic maps in the critical dimension},
   journal={Comm. Anal. Geom.},
   volume={17},
   date={2009},
   number={2},
   pages={185--226},
}
\bib{gold}{article}{
   author={Goldstein, Pawe{\l}},
   author={Strzelecki, Pawe{\l}},
   author={Zatorska-Goldstein, Anna},
   title={On polyharmonic maps into spheres in the critical dimension},
   journal={Ann. Inst. H. Poincar\'e Anal. Non Lin\'eaire},
   volume={26},
   date={2009},
   number={4},
   pages={1387--1405},
}
\bib{gong}{article}{
   author={Gong, Huajun},
   author={Lamm, Tobias},
   author={Wang, Changyou},
   title={Boundary partial regularity for a class of biharmonic maps},
   journal={Calc. Var. Partial Differential Equations},
   volume={45},
   date={2012},
   number={1--2},
   pages={165--191},
}
\bib{gr}{article}{
   author={Grabowski, Janusz},
   author={Rotkiewicz, Miko{\l}aj},
   title={Higher vector bundles and multi-graded symplectic manifolds},
   journal={J. Geom. Phys.},
   volume={59},
   date={2009},
   number={9},
   pages={1285--1305},
}
\bib{gm}{article}{
   author={Gracia-Saz, Alfonso},
   author={Mackenzie, Kirill Charles Howard},
   title={Duality functors for triple vector bundles},
   journal={Lett. Math. Phys.},
   volume={90},
   date={2009},
   number={1--3},
   pages={175--200},
}
\bib{gk}{article}{
   author={Gudmundsson, Sigmundur},
   author={Kappos, Elias},
   title={On the geometry of tangent bundles},
   journal={Expo. Math.},
   volume={20},
   date={2002},
   number={1},
   pages={1--41},
}
\bib{hajlasz}{article}{
   author={Haj{\l}asz, Piotr},
   title={Sobolev mappings between manifolds and metric spaces},
   conference={
      title={Sobolev spaces in mathematics. I},
   },
   book={
      series={Int. Math. Ser. (N. Y.)},
      volume={8},
      publisher={Springer}, 
      address={New York},
   },
   date={2009},
   pages={185--222},
}
\bib{ht}{article}{
   author={Haj{\l}asz, Piotr},
   author={Tyson, Jeremy T.},
   title={Sobolev Peano cubes},
   journal={Michigan Math. J.},
   volume={56},
   date={2008},
   number={3},
   pages={687--702},
}
\bib{hl}{article}{
   author={Hang, Fengbo},
   author={Lin, Fanghua},
   title={Topology of Sobolev mappings. II},
   journal={Acta Math.},
   volume={191},
   date={2003},
   number={1},
   pages={55--107},
}
\bib{hebey}{book}{
   author={Hebey, Emmanuel},
   title={Sobolev spaces on Riemannian manifolds},
   series={Lecture Notes in Mathematics},
   volume={1635},
   publisher={Springer}, 
   address={Berlin},
   date={1996},
   pages={x+116},
}
\bib{hirsch}{book}{
   author={Hirsch, Morris W.},
   title={Differential topology},
   series={Graduate Texts in Mathematics}, 
   volume={33},
   publisher={Springer},
   place={New York},
   date={1976},
   pages={x+221},
}
\bib{hormander}{book}{
   author={H{\"o}rmander, Lars},
   title={The analysis of linear partial differential operators},
   part = {I},
   edition={2},
   subtitle={Distribution theory and Fourier analysis},
   publisher={Springer},
   place={Berlin},
   date={1990},
}
\bib{hm}{article}{
   author={Hornung, Peter},
   author={Moser, Roger},
   title={Intrinsically $p$-biharmonic maps},
   journal={Calc. Var. Partial Differential Equations},
   volume={51},
   date={2014},
   number={3-4},
   pages={597--620},
}
\bib{klingenberg}{book}{
   author={Klingenberg, Wilhelm},
   title={Lectures on closed geodesics},
   series={Grundlehren der Mathematischen Wissenschaften}, 
   volume={230},
   publisher={Springer}, 
   address={Berlin--New York},
   date={1978},
   pages={x+227},
}
\bib{lamm}{article}{
   author={Lamm, Tobias},
   author={Wang, Changyou},
   title={Boundary regularity for polyharmonic maps in the critical dimension},
   journal={Adv. Calc. Var.},
   volume={2},
   date={2009},
   number={1},
   pages={1--16},
}
\bib{lee}{book}{
   author={Lee, John M.},
   title={Introduction to topological manifolds},
   series={Graduate Texts in Mathematics},
   volume={202},
   edition={2},
   publisher={Springer}, 
   address={New York},
   date={2011},
   pages={xviii+433},
}
\bib{mackenzie}{article}{
   author={Mackenzie, Kirill C. H.},
   title={Duality and triple structures},
   conference={
      title={The breadth of symplectic and Poisson geometry},
   },
   book={
      series={Progr. Math.},
      volume={232},
      publisher={Birkh\"auser},
      address={Boston, Mass.},
   },
   date={2005},
   pages={455--481},
}
\bib{michor}{book}{
   author={Michor, Peter W.},
   title={Topics in differential geometry},
   series={Graduate Studies in Mathematics},
   volume={93},
   publisher={American Mathematical Society}, 
   address={Providence, R.I.},
   date={2008},
   pages={xii+494},
}
\bib{moser}{article}{
   author={Moser, Roger},
   title={A variational problem pertaining to biharmonic maps},
   journal={Comm. Partial Differential Equations},
   volume={33},
   date={2008},
   number={7-9},
   pages={1654--1689},
}
\bib{nash54}{article}{
   author={Nash, John},
   title={$C^1$ isometric imbeddings},
   journal={Ann. of Math. (2)},
   volume={60},
   date={1954},
   pages={383--396},
   issn={0003-486X},
}
\bib{nash56}{article}{
   author={Nash, John},
   title={The imbedding problem for Riemannian manifolds},
   journal={Ann. of Math. (2)},
   volume={63},
   date={1956},
   pages={20--63},
   issn={0003-486X},
}
\bib{nirenberg}{article}{
   author={Nirenberg, L.},
   title={On elliptic partial differential equations},
   journal={Ann. Scuola Norm. Sup. Pisa (3)},
   volume={13},
   date={1959},
   pages={115--162},
}
\bib{sasaki}{article}{
   author={Sasaki, Shigeo},
   title={On the differential geometry of tangent bundles of Riemannian manifolds. II},
   journal={T\^ohoku Math. J. (2)},
   volume={14},
   date={1962},
   pages={146--155},
}
\bib{su}{article}{
   author={Schoen, Richard},
   author={Uhlenbeck, Karen},
   title={Boundary regularity and the Dirichlet problem for harmonic maps},
   journal={J. Differential Geom.},
   volume={18},
   date={1983},
   number={2},
   pages={253--268},
}
\bib{struwe}{article}{
   author={Struwe, Michael},
   title={Partial regularity for biharmonic maps, revisited},
   journal={Calc. Var. Partial Differential Equations},
   volume={33},
   date={2008},
   number={2},
   pages={249--262},
}
\bib{sw}{article}{
   author={Sun, XiaoWei},
   author={Wang, YouDe},
   title={New geometric flows on Riemannian manifolds and applications to Schr\"odinger--Airy flows},
   journal={Sci. China Math.},
   volume={57},
   date={2014},
   number={11},
   pages={2247--2272},
}
\bib{urakawa}{article}{
   author={Urakawa, H.},
   title={The geometry of biharmonic maps},
   conference={
      title={Harmonic maps and differential geometry},
   },
   book={
      series={Contemp. Math.},
      volume={542},
      publisher={Amer. Math. Soc.}, 
      address={Providence, R.I.},
   },
   date={2011},
   pages={159--175},
}
\bib{wendl}{book}{
   author={Wendl, C.},
   title={Lectures Notes on Bundles and Connections},
   date={2008},
}
\bib{white}{article}{
   author={White, Brian},
   title={Infima of energy functionals in homotopy classes of mappings},
   journal={J. Differential Geom.},
   volume={23},
   date={1986},
   number={2},
   pages={127--142},
}
\bib{whi}{article}{
   author={White, Brian},
   title={Homotopy classes in Sobolev spaces and the existence of energy minimizing maps},
   journal={Acta Math.},
   volume={160},
   date={1988},
   number={1-2},
   pages={1--17},
}
\bib{willem}{book}{
  author = {Willem, Michel},
  title = {Functional analysis},
  subtitle = {Fundamentals and Applications},
  series={Cornerstones},
  publisher = {Birkh\"auser},
  place = {Basel},
  volume = {XIV},
  pages = {213},
  date={2013},
}
\end{biblist}
\end{bibdiv}
\end{document}